\newtheorem{theorem}{Theorem}[section]
\newtheorem{proposition}[theorem]{Proposition}
\newtheorem{definition}[theorem]{Definition}
\newtheorem{lemma}[theorem]{Lemma}
\newtheorem{corollary}[theorem]{Corollary}
\newtheorem{conjecture}[theorem]{Conjecture}
\theoremstyle{remark}
\newtheorem{remark}[theorem]{Remark}
\newtheorem{example}[theorem]{Example}
\numberwithin{equation}{section}
\DeclareFontFamily{OT2}{cmr}{\hyphenchar\font45 }
\DeclareFontShape{OT2}{cmr}{m}{n}{<5><6><7><8><9>gen*wncyr<10><10.95><12><14.4><17.28><20.74><24.88>wncyr10}{}
\DeclareFontShape{OT2}{cmr}{b}{n}{<5><6><7><8><9>gen*wncyb<10><10.95><12><14.4><17.28><20.74><24.88>wncyb10}{}
\DeclareMathAlphabet{\mathcyr}{OT2}{cmr}{m}{n}
\DeclareMathAlphabet{\mathcyb}{OT2}{cmr}{b}{n}
\SetMathAlphabet{\mathcyr}{bold}{OT2}{cmr}{b}{n}
\newcommand{\R}{\mathbb R}
\newcommand{\Z}{{\mathbb Z}}
\newcommand{\C}{{\mathbb C}}
\newcommand{\Q}{{\mathbb Q}}
\newcommand{\F}{{\mathbb F}}
\DeclareMathOperator{\wt}{wt}
\DeclareMathOperator{\dep}{dep}
\newcommand{\kk}{{\bf k}}
\newcommand{\h}{\mathfrak H}
\newcommand{\z}{\xi}
\newcommand{\zn}{z}
\newcommand{\cyc}{\mathrm{cyc}}
\begin{document}

\title{Cyclotomic analogues of \\finite multiple zeta values}

\author{Henrik Bachmann}
\email{henrik.bachmann@math.nagoya-u.ac.jp}
\address{Graduate School of Mathematics\\
Nagoya University\\
Nagoya, Aichi 464-8602\\
Japan}

\author{Yoshihiro Takeyama}
\email{takeyama@math.tsukuba.ac.jp}
\address{Department of Mathematics\\
Faculty of Pure and Applied Sciences\\
University of Tsukuba\\
Tsukuba, Ibaraki 305-8571\\
Japan}

\author{Koji Tasaka}
\email{tasaka@ist.aichi-pu.ac.jp}
\address{Department of Information Science and Technology\\
Aichi Prefectural University\\
Nagakute-city, Aichi 480-1198\\ 
Japan
}

\classification{11M32, 11R18, 05A30}

\keywords{multiple zeta (star) values, finite multiple zeta (star) values, symmetric multiple zeta (star) values, Kaneko--Zagier conjecture, finite multiple harmonic $q$-series}


\begin{abstract}
We study the values of finite multiple harmonic $q$-series at a primitive root of unity and show that these specialize to the finite multiple zeta value (FMZV) and the symmetric multiple zeta value (SMZV) through an algebraic and analytic operation, respectively. Further, we prove the duality formula for these values, as an example of linear relations, which induce those among FMZVs and SMZVs simultaneously. 
This gives evidence towards a conjecture of Kaneko and Zagier relating FMZVs and SMZVs. Motivated by the above results, we define cyclotomic analogues of FMZVs, which conjecturally generate a vector space of the same dimension as 
that spanned by the finite multiple harmonic $q$-series at a primitive root of unity of sufficiently large degree.
\end{abstract}

\maketitle

\section{Introduction}
The purpose of this paper is to describe a connection between finite and symmetric multiple zeta (star) values. We explicate this connection in terms of a class of $q$-series evaluated at primitive roots of unity. This construction provides new evidence and a re-interpretation of a conjecture due to Kaneko and Zagier, thus relating finite and symmetric multiple zeta (star) values in an explicit and surprising way. 

For an index $\kk=(k_1,\ldots,k_r)\in (\mathbb{Z}_{\ge 1})^r$ with $k_1\ge2$ 
the multiple zeta value and the multiple zeta star value are defined by 
\begin{align*} 
\zeta(\kk) = \zeta(k_1,\ldots,k_r)=\sum_{m_1>\cdots>m_r>0 }\frac{1}{m_1^{k_1}\cdots m_r^{k_r}},\\ \notag
\zeta^\star(\kk) = \zeta^\star(k_1,\ldots,k_r)=\sum_{m_1\ge\cdots\ge m_r>0 }\frac{1}{m_1^{k_1}\cdots m_r^{k_r}}.
\end{align*}
We denote by $\mathcal{Z}$ the $\Q$-vector space spanned by all multiple zeta values.
The space $\mathcal{Z}$ forms a subalgebra of $\R$ over $\Q$ and is the same space with that spanned by multiple zeta star values.

In \cite{KanekoZagier}, Kaneko and Zagier introduce two objects: the \emph{finite multiple zeta values} $\zeta_{\mathcal{A}}(\kk)$ as elements in the $\Q$-algebra 
$\mathcal{A}=(\prod_{p} \mathbb{F}_p) \big/ (\bigoplus_{p} \mathbb{F}_p)$, where $p$ runs over all primes (see Definition \ref{def:fmzv}), and the \emph{symmetric multiple zeta values} $\zeta_{\mathcal{S}}(\kk)$ as elements in the quotient algebra $\mathcal{Z}/\zeta(2)\mathcal{Z}$ (see Definition \ref{def:symMZV}).
They conjecture that the finite multiple zeta values satisfy the same $\Q$-linear relation as the symmetric multiple zeta values and vice versa (see Conjecture \ref{conj:kanekozagier}). 
A few families of $\Q$-linear relations which are satisfied by the finite and the symmetric multiple zeta values simultaneously 
are obtained by the works of Murahara, Saito and Wakabayashi in \cite{Murahara,SaitoWakabayashi}, where the star versions $\zeta_{\mathcal{A}}^\star(\kk)$ and $\zeta_{\mathcal{S}}^\star(\kk)$ are also considered.

In the present paper, we examine for $n\in \Z_{\ge1}$ the values $\zn_{n}(\kk; \zeta_n)$ and $\zn_{n}^\star (\kk; \zeta_n)$ of finite multiple harmonic $q$-series $\zn_{n}(\kk; q)$ and $\zn_{n}^\star (\kk; q) $ evaluated at a primitive $n$-th  root of unity $\zeta_n$ (see Definition \ref{def:zn}).
These objects lie in the cyclotomic field $\Q(\zeta_n)$.
One of the main results on these values in this paper are the following relations with the finite and symmetric multiple zeta (star) values. 

\begin{theorem}\label{main2} 
For any index $\kk \in (\mathbb{Z}_{\ge 1})^r$, we have 
\begin{align*}
( \zn_p (\kk;\zeta_p) \mod \mathfrak{p}_{p})_p = \zeta_{\mathcal{A}} (\kk), \qquad 
( \zn_p^\star (\kk;\zeta_p) \mod \mathfrak{p}_{p})_p = \zeta_{\mathcal{A}}^\star (\kk)\,,
\end{align*}
where $\mathfrak{p}_{p}=(1-\zeta_p)$ is the prime ideal of $\Z[\zeta_p]$ generated by $1-\zeta_p$. 
\end{theorem}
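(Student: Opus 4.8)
The plan is to reduce the finite $q$-series modulo the prime ideal $\mathfrak{p}_p=(1-\zeta_p)$ directly, term by term. The whole statement rests on a single elementary observation: under the reduction map $\Z[\zeta_p]\to\Z[\zeta_p]/\mathfrak{p}_p$ the primitive root of unity $\zeta_p$ becomes $1$, so that every $q$-integer degenerates into an ordinary integer and the $q$-series collapses onto the truncated harmonic sum defining $\zeta_{\mathcal{A}}(\kk)$.

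First I would fix the arithmetic setup. Since $p$ is totally ramified in $\Q(\zeta_p)$ and $\mathfrak{p}_p=(1-\zeta_p)$ is the unique prime of $\Z[\zeta_p]$ above $p$, the residue field $\Z[\zeta_p]/\mathfrak{p}_p$ is canonically isomorphic to $\F_p$, and the reduction homomorphism sends $\zeta_p\mapsto 1$. I would then record the key lemma: for every $m$ with $1\le m\le p-1$, the $q$-integer $[m]_{\zeta_p}=(1-\zeta_p^m)/(1-\zeta_p)$ is a \emph{unit} in $\Z[\zeta_p]$, because $\zeta_p^m$ is again a primitive $p$-th root of unity and hence $1-\zeta_p^m$ and $1-\zeta_p$ generate the same ideal $\mathfrak{p}_p$. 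In particular every denominator appearing in $\zn_p(\kk;\zeta_p)$ is invertible in $\Z[\zeta_p]$, so $\zn_p(\kk;\zeta_p)$, and likewise its star version, already lies in $\Z[\zeta_p]$ and the reduction modulo $\mathfrak{p}_p$ is legitimate.

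Next I would compute the reduction of a single summand. Writing $[m]_{\zeta_p}=1+\zeta_p+\cdots+\zeta_p^{m-1}$ and applying the reduction homomorphism gives $[m]_{\zeta_p}\equiv m\pmod{\mathfrak{p}_p}$, while every power $\zeta_p^{\,j}$ occurring in a numerator reduces to $1$. Since reduction is multiplicative and carries units to units, the inverse $[m]_{\zeta_p}^{-1}$ reduces to $m^{-1}\in\F_p$ (note $m\not\equiv 0$ for $1\le m\le p-1$). Applying this to each factor in the defining sum of $\zn_p(\kk;\zeta_p)$ (Definition \ref{def:zn}), whose summation range coincides with the one in the definition of $\zeta_{\mathcal{A}}(\kk)$ (Definition \ref{def:fmzv}), yields
\[
\zn_p(\kk;\zeta_p)\equiv\sum_{p>m_1>\cdots>m_r>0}\frac{1}{m_1^{k_1}\cdots m_r^{k_r}}\pmod{\mathfrak{p}_p},
\]
which is exactly the $p$-th component of $\zeta_{\mathcal{A}}(\kk)$; passing to the class in $\mathcal{A}$ proves the first identity. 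The star version is identical after replacing the strict inequalities by weak ones, giving $\zeta_{\mathcal{A}}^\star(\kk)$.

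I do not expect a serious obstacle here: the content is the unit property of $[m]_{\zeta_p}$ together with the degeneration $\zeta_p\mapsto 1$. The only points requiring care are bookkeeping ones, namely confirming that the precise powers of $\zeta_p$ in the numerators of Definition \ref{def:zn} all reduce to $1$ (they are powers of a root of unity, so this is automatic) and matching the ordering conventions and summation ranges of the two definitions, so that the reduced sum is literally the one defining $\zeta_{\mathcal{A}}(\kk)$ rather than a reindexed variant.
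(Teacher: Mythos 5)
Your proof is correct and follows essentially the same route as the paper's: the unit property of the $q$-integers $[m]_{\zeta_p}$ (so the values lie in $\Z[\zeta_p]$), the identification $\Z[\zeta_p]/\mathfrak{p}_p\simeq\F_p$, and the congruence $[m]_{\zeta_p}\equiv m \bmod \mathfrak{p}_p$. The paper's proof is merely a terser version of yours, leaving implicit the points you spell out (the reduction $\zeta_p\mapsto 1$ killing the numerators, and the invertibility of $m$ in $\F_p$).
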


\begin{theorem}\label{main1}
For any index $\kk \in (\mathbb{Z}_{\ge 1})^r$, the limits 
\begin{align*}
\z(\kk)=\lim_{n\to \infty}\zn_n(\kk;e^{2\pi i/n}), \qquad 
\z^{\star}(\kk)=\lim_{n\to \infty}\zn_n^{\star}(\kk;e^{2\pi i/n})
\end{align*}
exist in $\C$ and it holds that 
\begin{align*}
\mathop{\mathrm{Re}}{\xi(\kk)} \equiv \zeta_{\mathcal{S}} (\kk), \qquad 
\mathop{\mathrm{Re}}{\xi^{\star}(\kk)} \equiv \zeta^{\star}_{\mathcal{S}} (\kk)  
\end{align*}
modulo $\zeta(2) \mathcal{Z}$.
\end{theorem}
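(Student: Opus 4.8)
The plan is to determine the large-$n$ behaviour of the defining sum of $\zn_n(\kk;\zeta_n)$, with $\zeta_n=e^{2\pi i/n}$, directly from the local behaviour of the $q$-integers $[m]=(1-\zeta_n^m)/(1-\zeta_n)$ at the two ends of the summation simplex. The decisive elementary input is the pair of limits
\begin{align*}
[m]\xrightarrow[n\to\infty]{}m \quad(m\text{ fixed}),\qquad [n-\ell]\xrightarrow[n\to\infty]{}-\ell\quad(\ell\text{ fixed}),
\end{align*}
together with $(1-\zeta_n)^{-1}=\tfrac{i}{2}\,e^{-i\pi/n}/\sin(\pi/n)\sim \tfrac{in}{2\pi}$. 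The first limit says that indices which stay bounded reproduce ordinary harmonic sums, while the second says that indices of the form $m=n-\ell$ with $\ell$ bounded reproduce harmonic sums carrying an extra factor $(-1)^{k}$ for each entry of weight $k$. This already explains the shape of the answer: the bounded part of the index builds an ordinary multiple zeta value and the remaining part its reversal, weighted by the sign $(-1)^{k_{j+1}+\cdots+k_r}$.

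Concretely, I would partition the simplex $0<m_1<\cdots<m_r<n$ by a cut $0\le j\le r$, declaring $m_1,\ldots,m_j$ ``small'' and rewriting $m_{j+1},\ldots,m_r$ through $\ell_i=n-m_i$, so that $0<\ell_r<\cdots<\ell_{j+1}$ are again small. On each piece the summand factorises in the limit into a forward sum in the small variables, tending to a regularisation of $\zeta(k_1,\ldots,k_j)$, and a backward sum in the $\ell$-variables, tending to $(-1)^{k_{j+1}+\cdots+k_r}$ times a regularisation of $\zeta(k_r,\ldots,k_{j+1})$. Summing over $j$ reproduces exactly the expression
\begin{align*}
\sum_{j=0}^{r}(-1)^{k_{j+1}+\cdots+k_r}\,\zeta^{\sh}(k_1,\ldots,k_j)\,\zeta^{\sh}(k_r,\ldots,k_{j+1})
\end{align*}
defining $\zeta_{\mathcal S}(\kk)$ (Definition \ref{def:symMZV}), the cut-off logarithm playing the role of the shuffle variable $T$ and cancelling within this symmetric combination by the standard $T$-independence of $\zeta_{\mathcal S}$. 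The principal obstacle is to control the transition (``middle'') region of indices that are neither bounded nor of bounded co-index $n-m$, and to justify the interchange of limit and summation uniformly; this is precisely where the divergences of the individual forward and backward sums, present as soon as $k_1=1$ or $k_{j+1}=1$, must be regularised in a compatible manner.

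It remains to extract the real part modulo $\zeta(2)\mathcal Z$. The regularising data enter only through a logarithm of the cut-off and the phase $i\pi$ carried by $e^{i\pi/n}$ and by $(1-\zeta_n)^{-1}\sim in/2\pi$; tracking these factors shows that the middle region contributes only purely imaginary terms and rational multiples of powers of $\zeta(2)$, so that odd powers of $\pi$ are confined to the imaginary part while the even powers $\pi^{2k}$ with $k\ge1$, being rational multiples of $\zeta(2)^k$, lie in $\zeta(2)\mathcal Z$. Consequently $\mathrm{Re}\,\z(\kk)$, reduced modulo $\zeta(2)\mathcal Z$, retains only the $\pi^{0}$-part and equals $\zeta_{\mathcal S}(\kk)$; the existence of $\z(\kk)$ in $\C$ follows a posteriori, each of the finitely many boundary pieces converging while the middle region contributes a convergent Riemann-type integral with vanishing remainder. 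Finally, the star statements are obtained identically on the weakly ordered simplex $0<m_1\le\cdots\le m_r<n$, or else deduced from the non-star case via the expansion of a star series as a sum of non-star ones over combinations of consecutive indices, a relation that already holds for the $q$-series and passes to the limit.
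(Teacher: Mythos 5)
Your strategy is in essence the one the paper follows: split the summation simplex into variables near $0$ and variables near $n$, identify the block near $n$ (after $m\mapsto n-m$) with a reversed, sign-twisted harmonic sum, and let the $T$-independence of the symmetric combination absorb the cut-off. The gap is that the proposal stops exactly where the real work begins. Your splitting into ``bounded'' and ``co-bounded'' variables leaves a middle region, which you yourself call the principal obstacle and then dispose of by assertion: the claim that it contributes ``only purely imaginary terms and rational multiples of powers of $\zeta(2)$'' cannot be checked cut by cut, since individual pieces diverge logarithmically as soon as an exponent $1$ sits in the middle range; that claim is essentially the statement to be proved. The paper eliminates the middle region from the outset by cutting the simplex exactly at $n/2$, obtaining the exact identity \eqref{eq:zn-in-terms-of-a} in terms of the finite sums $A_n^{\pm}$, and then proves the quantitative asymptotics $A_{n}^{\pm}(\kk)=R_{\kk}(\log(n/\pi)+\gamma\mp\pi i/2)+O((\log n)^{J(\kk)}/n)$ (Proposition \ref{prop:asym-A+}) by induction on the number of leading $1$'s, with Lemma \ref{lem:T-asym-admissible} (admissible case, with error control) and Lemma \ref{lem:T-asym-at-one} (the case $\kk=(1)$, where the constant $\log(n/\pi)+\gamma-\pi i/2$ is computed) as base cases and the stuffle identity \eqref{eq:e1-star-prod-reduction} driving the induction. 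Nothing in your sketch replaces this analysis, and without it neither the existence of the limits nor the evaluation of Theorem \ref{thm:limit-in-terms-of-Z} is established.

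Two further points. First, the regularization produced by truncated harmonic sums is the stuffle one, not the shuffle one: $\sum_{M\ge m_1>\cdots>m_r>0}\prod_i m_i^{-k_i}=R_{\kk}(\log M+\gamma)+o(1)$ with $R$ exactly as in Definition \ref{def:symMZV}; your $\zeta^{\sh}$ would give a different exact limit (already $R_{1,1}(T)=(T^{2}-\zeta(2))/2$, whereas the shuffle-regularized value is $T^{2}/2$). The discrepancy lies in $\zeta(2)\mathcal{Z}$, so the final congruence would survive, but the exact evaluation of $\z(\kk)$ that your argument hinges on is wrong as stated. Second, because the two blocks carry conjugate phases, the two regularized factors end up evaluated at $T+X$ and $T-X$ with $X=\pi i/2$ and $T=\log(n/\pi)+\gamma\to\infty$; cancelling the divergence therefore requires the two-variable independence statement of Lemma \ref{lem:ZZ-indep}, proved in the paper via \eqref{eq:formula-of-reg-MZV}, and not the ``standard'' one-variable $T$-independence \eqref{eq:sym-rem} that you invoke. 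Your reduction of the real part modulo $\zeta(2)\mathcal{Z}$ (odd powers of $\pi i$ are imaginary, even powers are rational multiples of powers of $\zeta(2)$) and your deduction of the star case from the non-star case do agree with the paper's, but they only become a proof once the exact limit formula with coefficients in $\mathcal{Z}$ is in place.
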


Theorems \ref{main2} and \ref{main1} can be applied to the study of $\Q$-linear relations among finite and symmetric multiple zeta (star) values.
In fact we prove a sort of duality formula for the values $z_n^{\star}(\kk;\zeta_n)$ (Theorem \ref{main3} below) and give a new proof of the duality formulas for $\zeta^{\star}_{\mathcal{A}}(\kk)$ and $\zeta_{\mathcal{S}}^\star(\kk)$ via Theorems \ref{main2} and \ref{main1}, which were obtained by Hoffman \cite{Hoffman} and Jarossay \cite{J} respectively (see Section 2.4.3).

\begin{theorem}\label{main3}
For any index $\kk$ and any $n$-th primitive root of unity $\zeta_n$, we have
\begin{align*}
\zn_n^\star (\kk;\zeta_n) = (-1)^{{\rm wt}(\kk)+1} \zn_n^\star (\overline{\kk^\vee};\zeta_n),
\end{align*}
where $\overline{\kk^\vee}$ is the reverse of the Hoffman dual $\kk^\vee$ (see Section \ref{sec:duality}).
\end{theorem}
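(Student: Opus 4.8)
The plan is to exploit the reflection symmetry that is available only at a primitive $n$-th root of unity, and thereby reduce the asserted \emph{reverse}-Hoffman-dual identity to a cleaner Hoffman-dual identity carrying no reversal. Writing $[m]=(1-q^m)/(1-q)$ for the $q$-integer, the starting observation is that at $q=\zeta_n$ one has $[n-m]=-\zeta_n^{-m}[m]$, together with $\zeta_n^{a(n-m)}=\zeta_n^{-am}$. First I would substitute $m_i\mapsto n-m_{r+1-i}$ in the defining sum of $\zn_n^\star(\kk;\zeta_n)$. Since this substitution carries a weakly increasing chain to another weakly increasing chain, a direct computation using the two displayed identities turns the $i$-th factor $\zeta_n^{(k_i-1)m_i}/[m_i]^{k_i}$ into $(-1)^{k_i}\zeta_n^{\mu}/[\mu]^{k_i}$; collecting the signs and reindexing yields
\[
\zn_n^\star(\kk;\zeta_n)=(-1)^{\wt(\kk)}\, W_n(\overline{\kk};\zeta_n),\qquad
W_n(l_1,\dots,l_r;\zeta_n):=\sum_{0<\mu_1\le\cdots\le\mu_r<n}\ \prod_{j=1}^r\frac{\zeta_n^{\mu_j}}{[\mu_j]^{l_j}},
\]
where $\overline{\kk}$ is the reverse of $\kk$ and $W_n$ carries the \emph{uniform} twist $\zeta_n^{\mu_j}$ in every factor.

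Because reversing an index and taking its Hoffman dual commute (both are induced by bijections of the set of comma-positions in $\{1,\dots,\wt(\kk)-1\}$, namely the reflection $p\mapsto\wt(\kk)-p$ and complementation, which commute), one has $\overline{\kk^\vee}=(\overline{\kk})^\vee$. Hence, setting $\mathbf l=\overline{\kk}$, Theorem~\ref{main3} is equivalent to the reversal-free identity
\[
W_n(\mathbf l;\zeta_n)=-\,\zn_n^\star(\mathbf l^\vee;\zeta_n).
\]
This is the genuine content of the theorem: the reflection step has absorbed both the reversal and the global sign $(-1)^{\wt(\kk)+1}$, leaving exactly the statement that passing to the Hoffman dual — complementing the comma-positions — preserves the value up to the single sign $-1$, once the per-factor twist $\zeta_n^{(l_j-1)\mu_j}$ of the star series is traded for the uniform twist $\zeta_n^{\mu_j}$ of $W_n$.

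To prove $W_n(\mathbf l;\zeta_n)=-\zn_n^\star(\mathbf l^\vee;\zeta_n)$ I would argue by induction on the weight, using the recursive description of the Hoffman dual: complementation interchanges the two weight-raising operations \emph{``increment the last part''} and \emph{``append a new part $1$''}. Concretely, I would derive from the defining sums two companion recursions — one describing $W_n$ and $\zn_n^\star$ under adjunction of a trailing $1$, the other under incrementing the last entry — and check that the Hoffman dual matches the first recursion on one side with the second on the other, the sign $-1$ propagating consistently. The vanishing $[n]=0$ at a primitive root of unity, equivalently the reflection identity above, is precisely what makes these two recursions close up; this is also where a general-$q$ statement would fail, and indeed $W_n(\mathbf l)=-\zn_n^\star(\mathbf l^\vee)$ is false at $q=1$. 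As a fallback I would keep in reserve a connected-sum argument in the spirit of Seki--Yamamoto: introduce a two-index series with a connector factor adapted to the reflection, whose two boundary specializations are $W_n(\mathbf l;\zeta_n)$ and $-\zn_n^\star(\mathbf l^\vee;\zeta_n)$, and whose transport relation moves a box across the connector in accordance with comma-complementation.

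The main obstacle is precisely this core Hoffman-complementation identity. The reflection symmetry is essentially forced and accounts only for the reversal and the overall sign; the real work lies in showing that complementation of comma-positions is a symmetry of these root-of-unity values, and in organizing the bookkeeping so that the argument holds uniformly in $n$ and independently of the chosen primitive root $\zeta_n$. I expect the delicate points to be the sign tracking — the factors $(-1)^{k_i}$ assembling into $(-1)^{\wt(\kk)}$ and then the residual $-1$ — and the exact matching of the twist $\zeta_n^{\mu_j}$ against $\zeta_n^{(l^\vee_j-1)\mu_j}$ under the dual block structure. Once Theorem~\ref{main3} is established at finite level, its images under Theorems~\ref{main2} and~\ref{main1} recover the dualities for $\zeta^\star_{\mathcal A}$ and $\zeta^\star_{\mathcal S}$ as promised.
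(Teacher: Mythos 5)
Your reductions are correct, but they stop short of the theorem's actual content. The reflection $m_i\mapsto n-m_{r+1-i}$ at $q=\zeta_n$, using $[n-m]_{\zeta_n}=-\zeta_n^{-m}[m]_{\zeta_n}$, does yield $\zn_n^\star(\kk;\zeta_n)=(-1)^{\wt(\kk)}W_n(\overline{\kk};\zeta_n)$ (this is essentially the change of variables the paper itself uses to prove Theorem \ref{thm:z-duality}\,i)), and reversal does commute with the Hoffman dual, so Theorem \ref{main3} is indeed equivalent to your reversal-free identity $W_n(\mathbf{l};\zeta_n)=-\zn_n^\star(\mathbf{l}^\vee;\zeta_n)$. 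But that identity is not weaker than the theorem; it \emph{is} the theorem, restated. At this point you offer only two unexecuted strategies. The induction on weight rests on ``two companion recursions'' that you never write down: there is no evident formula expressing the nested sum for an index whose last entry is incremented (or to which a trailing $1$ is adjoined) in terms of lower-weight values of $W_n$ and $\zn_n^\star$, and the claim that the vanishing $[n]_{\zeta_n}=0$ ``makes the recursions close up'' is exactly the assertion that needs proof. The connected-sum fallback likewise names no connector series and verifies no transport relation. By your own admission this core identity is ``the main obstacle'' and ``the real work''; so the proposal establishes only the reflection symmetry, which is the easy part, and leaves the duality itself unproven.

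For comparison, the paper's proof handles reversal and Hoffman-complementation simultaneously rather than separating them, and it is a finite, closed-form computation with no induction. Every index with $r$ blocks is encoded as $[a_1,\ldots,a_r;b_1,\ldots,b_r]=(\{1\}^{a_1-1},b_1+1,\ldots,\{1\}^{a_r-1},b_r)$, for which $\overline{[a_1,\ldots,a_r;b_1,\ldots,b_r]^\vee}=[b_r,\ldots,b_1;a_r,\ldots,a_1]$, and all values of fixed $r$ are packaged into a generating function $K(x_1,\ldots,x_r;y_1,\ldots,y_r)$. The key point is that $K$ can be resummed in closed form --- the sums over the $a_i$ and $b_i$ become products of rational functions of the $\zeta_n^{l_j}$ over weakly decreasing chains --- after which a partial-fraction expansion and the reflection of summation variables, with Lemma \ref{lem:q-binom-power} supplying the sign, give the symmetry $K(x_1,\ldots,x_r;y_1,\ldots,y_r)=K(-y_r,\ldots,-y_1;-x_r,\ldots,-x_1)$, which is precisely the asserted duality. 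To complete your plan you would need to carry out an analogous closed-form computation relating $W_n(\mathbf{l};\zeta_n)$ to $\zn_n^\star(\mathbf{l}^\vee;\zeta_n)$, or genuinely construct the connected sum with its transport relations; the present write-up does neither.
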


We shall discuss the dimension of the $\Q$-vector space spanned by the values $z_n(\kk;\zeta_n)$ of fixed weight, which is a subspace of the finite dimensional vector space $\Q(\zeta_n)$.
Here the weight of $z_n(k_1,\ldots,k_r;\zeta_n)$ is $k_1+\cdots+k_r$.
By numerical computations one can observe that the number of linearly independent relations over $\Q$ among $z_p(\kk;\zeta_p)$'s of weight $k$ is stable for sufficiently large prime $p$.

Motivated by this observation, we introduce the \emph{cyclotomic analogue of finite multiple zeta value} $Z(\kk)$ and its star version $Z^\star(\kk)$ in the cyclotomic analogue $\mathcal{A}^\cyc$ of the ring $\mathcal{A}$ (see Definition \ref{def:Z}).
The duality formula also holds for $Z^\star(\kk)$ (see Theorem \ref{thm:delta-duality-Zstar}).
After developing algebraic structures of the spaces spanned by all $Z(\kk)$ and by all $Z^\star(\kk)$, we count the number of linearly independent $\Q$-linear relations, which are consequences of the duality formula (see Theorem \ref{thm:double-shuffle} and Remark \ref{rem:dimension}). 

There is a natural projection that sends $Z(\kk)$ to $\zeta_{\mathcal{A}}(\kk)$ and we also expect, that there is a projection which sends $Z(\kk)$ to $\zeta_{S}(\kk)$ modulo $\zeta(2)\mathcal{Z}$ (see Conjecture \ref{conj:mainconj}).
The conjectured equality of the kernels of these two projections gives a reinterpretation of the Kaneko--Zagier conjecture from the cyclotomic analogue point of view. 
We believe that the cyclotomic analogue may give a new perspective of 
and become a tool for analyzing the Kaneko--Zagier conjecture. 

The contents of this paper are as follows.
In Section \ref{sec:finiteqser}, after developing basic properties on the values $\zn_n(\kk;\zeta_n)$ and $\zn_n^\star(\kk;\zeta_n)$, 
we first give the connection to the finite multiple zeta (star) value (Theorem \ref{main2}). After this we discuss the limit $n\rightarrow \infty$ and show the connection to the symmetric multiple zeta (star) value (Theorem \ref{main1}). We also prove our duality formula (Theorem \ref{main3}) in Section \ref{sec:duality}.
In the last section the cyclotomic analogue of finite multiple zeta (star) values is discussed.

\begin{acknowledgements}
The authors would like to thank Masanobu Kaneko for several fruitful comments and suggestions on this project, Andrew Corbett for a careful read of the manuscript and the referees for constructive comments and suggestions. 
This work was partially supported by JSPS KAKENHI Grant Numbers 16F16021, 16H07115 and 26400106.
 Finally the first and the third authors would like to thank the Max Planck Institute for Mathematics for hospitality.
\end{acknowledgements}


\section{Finite multiple harmonic $q$-series at a root of unity}\label{sec:finiteqser}

\subsection{Definitions}\label{sec:definitions}

In this subsection, we define the finite multiple harmonic $q$-series and give some examples of the value of depth one at a primitive root of unity.

We call a tuple of positive integers $\kk=(k_{1}, \ldots , k_{r})$ an \textit{index}.
An index $\kk=(k_{1}, \ldots , k_{r})$ is said to be \textit{admissible} if $k_{1}\ge 2$ or if it is the empty set $\emptyset$.  

For shorter notation we will write a subsequence $k, k, \ldots , k$ of length $a$ in an index as $\{k\}^{a}$. 
When $a=0$ we ignore it. For example $(\{1\}^{0}, 3, \{1\}^{2}, 2, \{1\}^{0}, 4)=(3, 1, 1, 2, 4)$. 

We define the \emph{weight} $\wt(\kk)$ and the \emph{depth} $\dep(\kk)$ 
of an index $\kk=(k_{1}, \ldots , k_{r})$ by 
\begin{align*}
\wt(\kk) = k_1+\dots +k_r, \qquad 
\dep(\kk)=r. 
\end{align*}

With this notation we can define the following $q$-series which will be one of the main objects in this work.
\begin{definition} \label{def:zn}
Let $n\geq 1$ be a natural number and $q$ a complex number satisfying $q^{m}\neq 1$ for $n>m>0$ 
(to ensure the well-definedness).
For an index $\kk=(k_1,\dots,k_r)$ we define
\begin{align*}
\zn_n(\kk;q) = \zn_n(k_1,\dots,k_r;q) =\sum_{n > m_1 > \dots > m_r >0} 
\frac{q^{(k_1-1) m_1} \dots q^{(k_r-1) m_r}}{ [m_1]_q^{k_1} \dots [m_r]_q^{k_r}}
\end{align*}
and 
\begin{align*}
\zn^\star_n(\kk;q) = \zn_n^\star(k_1,\dots,k_r;q) =\sum_{n > m_1 \geq \dots  \geq  m_r >0} 
\frac{q^{(k_1-1) m_1} \dots q^{(k_r-1) m_r}}{ [m_1]_q^{k_1} \dots [m_r]_q^{k_r}}, 
\end{align*}
where $[m]_{q}$ is the $q$-integer 
\begin{align*}
[m]_q= \frac{1-q^m}{1-q}.
\end{align*}
By agreement we set 
$\zn_n(\kk;q)=0$ if $\dep(\kk) \ge n$ and 
$\zn_n(\emptyset;q)=\zn_n^{\star}(\emptyset;q)=1$. 
\end{definition}

The above $q$-series $\zn_n(\kk;q)$ was also studied by Bradley \cite[Definition 4]{Bradley2} (see also \cite{Kawashima}). 
When $\kk$ is admissible, 
the limit $\lim\limits_{n\rightarrow \infty} \zn_n(\kk;q)$ 
converges for $|q|<1$ and it is called a $q$-analogue of multiple zeta values, since it can be shown that $\lim\limits_{q\to 1}\lim\limits_{n\to \infty} \zn_n(\kk;q)=\zeta(\kk)$.
Their algebraic structure as well as the $\Q$-linear relation were studied by many authors 
\cite{Bradley1,IKOO,OhnoOkuda,OhnoOkudaZudilin,OkudaTakeyama,Takeyama1,Takeyama2,Takeyama,Zhao}. 
There are various different $q$-analogue models in the literature and the one corresponding to our Definition \ref{def:zn} is often called the Bradley-Zhao model. 

\begin{remark}\label{rem:product}
Using the standard decomposition
\begin{align}\label{eq:productrule}
\frac{q^{(k_1-1)m}}{[m]_q^{k_1}} 
\frac{q^{(k_2-1)m}}{[m]_q^{k_2}}=
\frac{q^{(k_1+k_2-1)m}}{[m]_q^{k_{1}+k_{2}}}+(1-q)
\frac{q^{(k_1+k_2-2)m}}{[m]_q^{k_{1}+k_{2}-1}} \qquad (m,k_1,k_2 \ge 1), 
\end{align} 
we see that $\zn_n(\kk; q)$ and $\zn_n^{\star}(\kk; q)$ are related to each other 
in the following way: 
\begin{align}
& 
\zn_{n}^{\star}(\kk;q)=\sum_{\mathbf{a}} \zn_{n}(\mathbf{a};q)+
\sum_{\substack{\kk' \\ \wt(\kk')<\wt(\kk)}} c_{\kk, \kk'} (1-q)^{\wt(\kk)-\wt(\kk')}\zn_{n}(\kk';q),  
\label{eq:zn-star-to-nostar} \\ 
& 
\zn_{n}(\kk;q)=\sum_{\mathbf{a}}(-1)^{\mathrm{dep}(\kk)-\mathrm{dep}(\mathbf{a})} \zn_{n}^{\star}(\mathbf{a};q)+
\sum_{\substack{\kk' \\ \wt(\kk')<\wt(\kk)}} \tilde{c}_{\kk, \kk'}(1-q)^{\wt(\kk)-\wt(\kk')}\zn_{n}^{\star}(\kk';q),  
\label{eq:zn-nostar-to-star} 
\end{align}
where the sum $\sum_{\mathbf{a}}$ is over all indices of the form 
$(k_{1} \square k_{2} \square \cdots \square k_{r})$ in which each $\square$ is `$+$' (plus) or `$,$' (comma) and $c_{\kk, \kk'}$ and $\tilde{c}_{\kk, \kk'}$ are integers independent on $n$. 
For example, it holds that 
\begin{align*}
\zn_{n}^{\star}(3,2,1;q)&=\zn_{n}(3,2,1;q)+\zn_{n}(5,1;q)+\zn_{n}(3,3;q)+\zn_{n}(6;q) \\ 
&\quad {}+(1-q)\left( \zn_{n}(4,1;q)+\zn_{n}(3,2;q)+2\zn_{n}(5;q)\right)+(1-q)^2\zn_{n}(4;q).
\end{align*}
Moreover, using \eqref{eq:productrule}, we can write the product $z_{n}(\kk; q)z_{n}(\kk'; q)$ as a $\Q$-linear combination of $(1-q)^{\mathrm{wt}(\kk)+\mathrm{wt}(\kk')-\mathrm{wt}(\kk'')}
z(\kk''; q)$ with indices $\kk''$ satisfying $0\le \mathrm{wt}(\kk'')\le \mathrm{wt}(\kk)+\mathrm{wt}(\kk')$. 
For example, we see that 
\begin{align*}
z_{n}(1;q)z_{n}(2;q)&=
\left(\sum_{n>m>l>0}+\sum_{n>l>m>0}+\sum_{n>m=l>0}\right)\frac{1}{[m]}\frac{q^{l}}{[l]^{2}} \\ 
&=z_{n}(1,2; q)+z_{n}(2,1; q)+z_{n}(3;q)+(1-q)z_{n}(2;q). 
\end{align*}
\end{remark}

We mainly consider the values $\zn_{n}(\kk; q)$ and $\zn_{n}^{\star}(\kk; q)$ where $q$ is equal to a primitive $n$-th root of unity $\zeta_{n}$. 
They are well-defined as the elements in the cyclotomic field $\Q(\zeta_n)$. 
For example, the first few values $\zn_n(k;\zeta_n)$ of depth one are given by
\begin{equation}\label{eq:example-zn-depth1}
\begin{aligned}
&\zn_n(1;\zeta_n) =\frac{n-1}{2} (1-\zeta_n)\,,\quad \zn_n(2;\zeta_n)=-\frac{n^2-1}{12} (1-\zeta_n)^2\,,\\
&\zn_n(3;\zeta_n) =\frac{n^2-1}{24} (1-\zeta_n)^3 \,, \quad
\zn_n(4;\zeta_n)=\frac{(n^2-1)(n^2-19)}{720} (1-\zeta_n)^4. 
\end{aligned}
\end{equation}
These can be deduced in general from the following formula for the generating function
\begin{align}
\sum_{k>0} \zn_n(k;\zeta_n) \left( \frac{x}{1-\zeta_n}  \right)^k = \frac{n x}{1-(1+x)^n} + 1,
\label{eq:generating-function-depth-one-pre}
\end{align}
which can be shown by using the basic properties of the $n$-th root of unity $\zeta_n$. In particular this shows that $\zn_n(k;\zeta_n) \in (1-\zeta_n)^k \cdot \Q$.

\begin{remark} \label{rem:rationality}
The formula \eqref{eq:generating-function-depth-one-pre} implies that for $k\ge1$ 
\begin{equation}\label{eq:degbern}
\frac{\zn_n(k;\zeta_n)}{(n(1-\zeta_n))^{k}} = -\frac{\beta_k(n^{-1})}{k!} \,,
\end{equation}
where $\beta_k(x) \in \Q[x]$ is the \emph{degenerate Bernoulli number} defined by Carlitz in \cite{C}.  
Since the limit of $\beta_k(n^{-1})$ as $n \to \infty$ is equal to 
the $k$-th Bernoulli number $B_k$, 
the formula \eqref{eq:degbern} can be viewed as a finite analogue of Euler's formula given by
$\zeta(k)/(-2\pi i)^{k}=-B_{k}/2k!$ for even $k$.
\end{remark}

\subsection{Connection with finite multiple zeta values}\label{subsec:conn-FMZV}
In this subsection, we give a proof of Theorem \ref{main2}.

\subsubsection{Definition of finite multiple zeta values}

The finite multiple zeta values will be elements in the ring
\begin{align*}
\mathcal{A} = \left( \prod_{p:{\rm prime}} \F_p \right) \big/ \left( \bigoplus_{p:{\rm prime}} \F_p \right)\,.
\end{align*}
Its elements are of the form $(a_p)_p$, where $p$ runs over all primes and $a_p\in \F_p$.  
Two elements $(a_p)_p$ and $(b_p)_p$ are identified if and only if 
$a_p=b_p$ for all but finitely many primes $p$.
The ring $\mathcal{A}$, which was introduced by Kontsevich \cite[\S2.2]{Kontsevich}, 
carries a $\Q$-algebra structure by sending $a\in \Q$ to $(a \mod p)_p\in \mathcal{A}$ 
diagonally except for finitely many primes which divide the denominator of $a$. 

\begin{definition} \label{def:fmzv}
For an index $\kk=(k_1,\ldots,k_r)$, we define the \emph{finite multiple zeta value}
\begin{align*}
\zeta_{\mathcal{A}} (\kk)=\zeta_{\mathcal{A}} (k_1,\ldots,k_r) = 
\left( \sum_{p>m_1>\cdots >m_r>0} \frac{1}{m_1^{k_1}\cdots m_r^{k_r}} \mod p \right)_p \in \mathcal{A} 
\end{align*}
and its star version
\begin{align*}
\zeta_{\mathcal{A}}^\star (\kk)=\zeta_{\mathcal{A}}^\star (k_1,\ldots,k_r) = 
\left( \sum_{p>m_1\ge \cdots \ge m_r>0} \frac{1}{m_1^{k_1}\cdots m_r^{k_r}} \mod p \right)_p \in \mathcal{A}. 
\end{align*}
\end{definition}

\subsubsection{Proof of Theorem \ref{main2}}

Now we prove Theorem \ref{main2}, 
which is immediate from the standard facts on the algebraic number theory 
(see, e.g., \cite{Washington}).

\begin{proof}[Proof of Theorem \ref{main2}]
For $p$ prime and any $p$-th primitive root of unity $\zeta_p$, 
the ring $\Z[\zeta_p]$ is the ring of algebraic integers in the cyclotomic field $\Q(\zeta_p)$.  
Since the value $[m]_{\zeta_p}=(1-\zeta_p^m)/(1-\zeta_p)$ is a cyclotomic unit, 
$\zn_n(\kk;\zeta_p)$ and $\zn_n^\star(\kk;\zeta_p)$ belong to $\Z[\zeta_p]$. 

Let $\mathfrak{p}_p=(1-\zeta_p)$ be the prime ideal of $\Z[\zeta_p]$ generated by $1-\zeta_p$. 
Since the norm of $\mathfrak{p}_p$ is equal to $p$, 
we have $\Z[\zeta_p]/\mathfrak{p}_p=\F_p$.
Now Theorem \ref{main2} follows from $[m]_{\zeta_p} \equiv m \mod \mathfrak{p}_p$ for $p>m>0$.
\end{proof}

\subsection{Connection with symmetric multiple zeta values}\label{sec:connectiontosymmzv}
In this subsection, we prove Theorem \ref{main1}.

\subsubsection{Definition of symmetric multiple zeta values}

To define the symmetric multiple zeta values, we recall Hoffman's algebraic setup \cite{Hoffman1} with a slightly different convention.

Let $\mathfrak{H}^1=\Q\langle e_1,e_2,\dots\rangle$ be the noncommutative polynomial algebra of 
indeterminates $e_j$ with $j\geq 1$ and set for an index $\kk=(k_1,\ldots,k_r)$
\begin{align*}
e_\kk:=e_{k_1}\cdots e_{k_r} \,.
\end{align*}
For the empty index $\emptyset$ we set $e_{\emptyset}=1$. 
The monomials $\{e_{\kk}\}$ associated to all indices $\kk$ 
form a basis of $\mathfrak{H}^1$ over $\Q$.  

The \textit{stuffle product} is the $\Q$-bilinear map 
$\ast:\mathfrak{H}^1\times \mathfrak{H}^1\rightarrow \mathfrak{H}^1$ characterized by 
the following properties: 
\begin{equation}\label{eq:stuffle}
\begin{aligned}
& 
1\ast w =w\ast 1=w \quad (w\in \mathfrak{H^1}), \\
&
e_{k}w \ast e_{k'}w' = e_{k} (w \ast e_{k'}w') +e_{k'} (e_{k}w \ast w') + e_{k+k'}(w\ast w') \quad (k,k'\geq 1,w,w'\in\mathfrak{H}^1). 
\end{aligned}
\end{equation}
We denote by $\mathfrak{H}^1_{\ast}$ the commutative $\Q$-algebra $\mathfrak{H}^1 $ 
equipped with the multiplication $\ast$.

As stated in \cite[Proposition 1]{IKZ}, there exists a unique $\Q$-algebra homomorphism 
$R: \h^{1}_\ast \to \R[T]$ satisfying 
$R(1)=1, \, R(e_{1})=T$ and $R(e_{\kk})=\zeta(\kk)$ for any admissible index $\kk$
\footnote{The map $R$ is denoted by $Z^{*}$ in \cite{IKZ}.}. 
For an index $\kk$ we define the \textit{stuffle regularized multiple zeta value} $R_{\kk}(T)$ by 
\begin{align*}
R_{\kk}(T):=R(e_{\kk}) \in \R[T] .
\end{align*}
Note that $R_{\emptyset}(T)=1$ and  $R_\kk(T)=\zeta(\kk)$ if $\kk$ is admissible.

\begin{definition}\label{def:symMZV}
For an index $\kk=(k_1,\ldots,k_r) \in (\mathbb{Z}_{\ge 1})^r$ we define the \emph{symmetric multiple zeta value} 
\begin{align*} \zeta_{\mathcal{S}}(\kk)&=\zeta_{\mathcal{S}}(k_1,\ldots,k_r)=
\sum_{a=0}^r (-1)^{k_1+\dots+k_a} 
R_{k_a,,k_{a-1},\dots,k_1}(T) R_{k_{a+1},k_{a+2},\dots,k_r}(T) . 
\end{align*}
and its star version
\[
\zeta^\star_{\mathcal{S}}(\kk)=\zeta^\star_{\mathcal{S}}(k_1,\ldots,k_r)=
\sum_{\footnotesize \square \ \mbox{is `,' or `$+$'}}
\zeta_{\mathcal{S}}(k_1 \square \cdots \square k_r).
\label{eq:gamma-star-nostar}
\]
\end{definition}

Kaneko and Zagier \cite{KanekoZagier} showed that the symmetric multiple zeta value does not depend on $T$, i.e. we have 
\begin{align}
\zeta_{\mathcal{S}}(k_1,\ldots,k_r)=\sum_{a=0}^{r} (-1)^{k_1+\dots+k_a} 
R_{k_a,\dots,k_1}(0) R_{k_{a+1},\dots,k_r}(0) \in \R.
\label{eq:sym-rem}  
\end{align}
This can be checked from the following lemma, 
which will be also used to compute the limit of $\zn_n(\kk;e^{2\pi i/n})$ as $n \to \infty$ in Theorem \ref{thm:limit-in-terms-of-Z}. 

\begin{lemma}\label{lem:ZZ-indep}
For any index $\kk=(k_{1}, \ldots , k_{r})$, the polynomial 
\begin{align}
\sum_{a=0}^{r}(-1)^{k_1+\cdots+k_a}
R_{k_{a}, k_{a-1}, \ldots , k_{1}}(T+X) R_{k_{a+1}, k_{a+2}, \ldots , k_{r}}(T-X)  
\label{eq:Z-star-symm-pol}
\end{align} 
does not depend on $T$. 
Hence it is equal to 
\begin{align*}
\sum_{a=0}^{r}(-1)^{k_1+\cdots+k_a}
R_{k_{a}, k_{a-1}, \ldots , k_{1}}(X) R_{k_{a+1}, k_{a+2}, \ldots , k_{r}}(-X).  
\end{align*}
\end{lemma}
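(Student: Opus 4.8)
The plan is to prove that the polynomial in \eqref{eq:Z-star-symm-pol} has vanishing partial derivative in $T$; the second assertion then follows by evaluating at $T=0$. Writing
\[
P(T,X)=\sum_{a=0}^{r}(-1)^{k_1+\cdots+k_a}\,R_{k_a,\ldots,k_1}(T+X)\,R_{k_{a+1},\ldots,k_r}(T-X),
\]
the entire content is the identity $\partial P/\partial T=0$. The engine I would use is the \emph{derivative formula} for the stuffle-regularized values,
\[
\frac{d}{dT}R_{k_1,\ldots,k_r}(T)=
\begin{cases}
R_{k_2,\ldots,k_r}(T) & \text{if } k_1=1,\\
0 & \text{if } k_1\ge 2,
\end{cases}
\]
which says that differentiation in $T$ simply strips one leading letter $e_1$ and kills everything else.

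To establish this formula I would exploit that $R$ is a $\Q$-algebra homomorphism for $\ast$ with $R(e_1)=T$ and with $R(e_\kk)=\zeta(\kk)$ constant whenever $\kk$ is admissible. Then $\partial:=\frac{d}{dT}\circ R$ is a derivation from $(\mathfrak{H}^1,\ast)$ into $\R[T]$, regarded as an $\mathfrak{H}^1$-module through $R$, because $\partial(u\ast v)=\partial(u)R(v)+R(u)\partial(v)$. Since $\mathfrak{H}^1$ is generated as a $\ast$-algebra by the letters $e_k$, and $\partial(e_k)=\frac{d}{dT}R_k(T)$ equals $1$ for $k=1$ and $0$ for $k\ge 2$, the derivation $\partial$ is completely determined. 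Identifying $\partial$ with $R\circ D$, where $D$ is the $\ast$-derivation given by formal differentiation in $e_1$ under the isomorphism $\mathfrak{H}^1_\ast\cong\mathfrak{H}^0_\ast[e_1]$ (here $\mathfrak{H}^0$ is the subalgebra spanned by the admissible $e_\kk$), then yields the closed form above. This identification is the step I expect to require the most care; if one prefers, it may simply be quoted from the regularization theory of \cite{IKZ}.

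Granting the derivative formula, the rest is a clean telescoping. Differentiating $P$ in $T$ and using $\partial_T(T\pm X)=1$ gives, with $'$ denoting differentiation in the argument,
\[
\frac{\partial P}{\partial T}=\sum_{a=0}^{r}(-1)^{k_1+\cdots+k_a}\Bigl(R'_{k_a,\ldots,k_1}(T+X)\,R_{k_{a+1},\ldots,k_r}(T-X)+R_{k_a,\ldots,k_1}(T+X)\,R'_{k_{a+1},\ldots,k_r}(T-X)\Bigr).
\]
By the formula, the first factor survives only when the leading entry $k_a$ of $(k_a,\ldots,k_1)$ equals $1$, giving $R'_{k_a,\ldots,k_1}=R_{k_{a-1},\ldots,k_1}$, and the second survives only when $k_{a+1}=1$, giving $R'_{k_{a+1},\ldots,k_r}=R_{k_{a+2},\ldots,k_r}$. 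Reindexing the second family by $a\mapsto a+1$, both families now range over the set $\{b:k_b=1\}$ and produce the common monomial $R_{k_{b-1},\ldots,k_1}(T+X)\,R_{k_{b+1},\ldots,k_r}(T-X)$, carrying the signs $(-1)^{k_1+\cdots+k_b}$ and $(-1)^{k_1+\cdots+k_{b-1}}$ respectively. Since $k_b=1$, these two signs are opposite, so the families cancel term by term and $\partial P/\partial T=0$.

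Hence $P(T,X)$ is independent of $T$, and setting $T=0$ gives the stated value $\sum_{a=0}^{r}(-1)^{k_1+\cdots+k_a}R_{k_a,\ldots,k_1}(X)\,R_{k_{a+1},\ldots,k_r}(-X)$, completing the argument. The only genuinely delicate point is the derivative formula (equivalently, the identification of $\partial$ with the $e_1$-derivative); once it is in hand, everything reduces to the sign bookkeeping of the telescoping above, which is where I would concentrate the writing effort to make the cancellation transparent.
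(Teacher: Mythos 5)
Your strategy---prove $\partial P/\partial T=0$ and telescope---is genuinely different from the paper's proof, and, \emph{granted} your derivative formula, the cancellation you describe is correct: both surviving families are indexed by $\{b:k_b=1\}$, produce the identical monomial $R_{k_{b-1},\ldots,k_1}(T+X)R_{k_{b+1},\ldots,k_r}(T-X)$, and carry opposite signs because $k_b=1$. The gap is in the derivative formula itself, which is exactly where the substance of the lemma lies. Your first justification fails outright: $\h^1_\ast$ is \emph{not} generated as a $\ast$-algebra by the letters $e_k$. For instance, in weight $3$ the products $e_3$, $e_1\ast e_2$ and $e_1\ast e_1\ast e_1$ span only a $3$-dimensional subspace of the $4$-dimensional span of $e_3$, $e_2e_1$, $e_1e_2$, $e_1e_1e_1$, so a derivation is not determined by its values on the letters alone. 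Your fallback correctly invokes $\h^1_\ast\cong\h^0_\ast[e_1]$ (where $\h^0$ is the span of the admissible monomials) and the identity $\frac{d}{dT}\circ R=R\circ D$ with $D$ formal differentiation in $e_1$, but this does not by itself ``yield the closed form'': one still must show that $D$ sends the word $e_\kk$ to $e_{k_2,\ldots,k_r}$ when $k_1=1$ and to $0$ when $k_1\ge2$, i.e.\ that formal $e_1$-differentiation acts on word monomials by stripping a leading $e_1$. That statement is equivalent to the paper's formula \eqref{eq:formula-of-reg-MZV}, which is precisely the technical heart of the paper's own proof (established there by induction on $s$ via the stuffle identity \eqref{eq:e1-star-prod-reduction}); deferring it to ``care'' or to a citation leaves the main point unproved.

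The gap can be closed while keeping your route. Define the $\Q$-linear map $S:\h^1\to\h^1$ on words by $S(e_1w)=w$, $S(e_kw)=0$ for $k\ge2$, and $S(1)=0$. Splitting into the cases where both, exactly one, or neither of the two leading letters is $e_1$ in the recursion \eqref{eq:stuffle}, one checks in one line that $S$ is a derivation of $(\h^1,\ast)$. Consequently $R\circ S$ and $\frac{d}{dT}\circ R$ are both derivations from $\h^1_\ast$ into $\R[T]$ (viewed as an $\h^1_\ast$-module via $R$); they agree on $e_1$ (both give $1$) and on $\h^0$ (both give $0$), and $\{e_1\}\cup\h^0$ generates $\h^1_\ast=\h^0_\ast[e_1]$, so they agree everywhere, since the set on which two derivations agree is a subalgebra. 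This is your derivative formula, and with it your telescoping argument is complete---and arguably cleaner than the paper's computation, which first reduces to blocks of $1$'s and then evaluates a binomial sum in closed form. But the derivation property of $S$, or equivalently \eqref{eq:formula-of-reg-MZV}, must be proved rather than assumed: as written, your proposal takes as input the very statement that constitutes the paper's key step.
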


\begin{proof}
From the definition, we see that the polynomial \eqref{eq:Z-star-symm-pol} is a sum of polynomials of the form 
\begin{align}
 \pm \sum_{a=0}^{s}(-1)^a
R_{\{1\}^a,\kk}(T+X) R_{\{1\}^{s-a},\kk'}(T-X) 
\label{eq:Z-star-symm-pol-comp}
\end{align}
with some admissible indices $\kk$ and $\kk'$. For any index $\kk=(k_{1}, \ldots , k_{r})$ and $s\geq 0$, it holds that 
\begin{align}
e_{1}*(e_{1}^{s}e_{\mathbf{k}})=(s+1)e_{1}^{s+1}e_{\mathbf{k}}+
\sum_{a=1}^{r}(e_{1}^{s}e_{\mathbf{k}'(a)}+e_{1}^{s}e_{\mathbf{k}''(a)})+
\sum_{b=1}^{s}e_{1}^{b-1}e_{2}e_{1}^{s-b}e_{\mathbf{k}}, 
\label{eq:e1-star-prod-reduction}
\end{align} 
where 
\begin{align}
\mathbf{k}'(a)=(k_{1}, \ldots , k_{a}+1, \ldots , k_{r}), \quad 
\mathbf{k}''(a)=(k_{1}, \ldots , k_{a}, 1, k_{a+1}, \ldots , k_{r})\,. 
\label{eq:kk-primes}
\end{align}
Using this one can show by induction on $s$ that
\begin{align}
R_{\{1\}^s,\kk} (T)= \sum_{j=0}^{s} R_{\{1\}^{s-j},\kk}(0) \frac{T^j}{j!}. 
\label{eq:formula-of-reg-MZV} 
\end{align} 
{}From this formula we see that the sum \eqref{eq:Z-star-symm-pol-comp} without sign is equal to 
\begin{align*}
&\sum_{a=0}^{s}\sum_{j=0}^{a}\sum_{l=0}^{s-a}
(-1)^{a} R_{\{1\}^{a-j},\mathbf{k}}(0)R_{\{1\}^{s-a-l}, \mathbf{k'}}(0) \frac{(T+X)^{j}}{j!}\frac{(T-X)^{l}}{l!} \\ 
&=\sum_{\substack{j, l \ge 0 \\ j+l \le s}}
\sum_{a=j}^{s-l}
(-1)^{a} R_{\{1\}^{a-j},\mathbf{k}}(0)R_{\{1\}^{s-a-l}, \mathbf{k'}}(0) \frac{(T+X)^{j}}{j!}\frac{(T-X)^{l}}{l!} \\ 
&=\sum_{m=0}^{s} \sum_{a=0}^{s-m}
(-1)^{a} R_{\{1\}^{a},\mathbf{k}}(0)R_{\{1\}^{s-m-a}, \mathbf{k'}}(0) \sum_{\substack{j+l=m\\j, l \ge 0 }}
(-1)^j\frac{(T+X)^{j}}{j!}\frac{(T-X)^{l}}{l!}
\\
&=\sum_{m=0}^{s}\frac{(-2X)^m}{m!} \sum_{a=0}^{s-m}
(-1)^{a} R_{\{1\}^{a},\mathbf{k}}(0)R_{\{1\}^{s-m-a}, \mathbf{k'}}(0),
\end{align*}
which shows that the polynomial \eqref{eq:Z-star-symm-pol-comp} does not depend on $T$, 
neither does  \eqref{eq:Z-star-symm-pol}.
\end{proof}

\subsubsection{Evaluation of the limit}

In order to evaluate the limit of $\zn_n(\kk;e^{2\pi i/n})$ as $n \to \infty$, we first rewrite the value $\zn_{n}(\kk; e^{2\pi i/n})$. 
Let $n$ be a positive integer. 
When $q=e^{2\pi i/n}$ we see that 
\begin{align*}
\frac{1-q}{1-q^{m}}=e^{-\frac{\pi i}{n}(m-1)}\frac{\sin{\frac{\pi}{n}}}{\sin{\frac{m \pi}{n}}} \qquad 
(n>m > 0).  
\end{align*}
Therefore it holds that 
\begin{align*}
\zn_{n}(\kk;e^{\frac{2\pi i}{n}}) =
\left(e^{\frac{\pi i}{n}}\frac{n}{\pi}\sin{\frac{\pi}{n}}\right)^{\mathrm{wt}(\mathbf{k})}
\sum_{n>m_{1}>\cdots >m_{r}>0}
\prod_{j=1}^{r}\frac{e^{\frac{\pi i}{n}(k_{j}-2)m_{j}}}
{\left(\frac{n}{\pi}\sin{\frac{m_{j}\pi}{n}}\right)^{k_{j}}}
\end{align*}
for any non-empty index $\kk=(k_{1}, \ldots , k_{r})$. 
Decompose the set $\{(m_{1}, \ldots , m_{r}) \in \Z^{r} \, | \, n>m_{1}>\cdots >m_{r}>0\}$ into the disjoint union
\begin{align*}
\bigsqcup_{a=0}^{r}
\{(m_{1}, \ldots , m_{r}) \in \Z^{r} \, | \, n>m_{1}>\cdots >m_{a}>\frac{n}{2} \ge m_{a+1}>\cdots >m_{r}>0 \}
\end{align*}
and change the summation variables $m_{j}$ to $n_{j}=n-m_{a+1-j} \, (1\le j \le a)$ and 
$l_{j}=m_{a+j} \, (1 \le j \le r-a)$. 
Then we find that 
\begin{align*}
& 
\zn_{n}(\kk;e^{\frac{2\pi i}{n}}) =
\left(e^{\frac{\pi i}{n}}\frac{n}{\pi}\sin{\frac{\pi}{n}}\right)^{\mathrm{wt}(\mathbf{k})}\\ 
&\times\sum_{a=0}^{r}(-1)^{\sum_{j=1}^{a}k_{j}} 
\sum_{n/2>n_{1}>\cdots >n_{a}>0}
\prod_{j=1}^{a}\frac{e^{-\frac{\pi i}{n}(k_{a+1-j}-2)n_{j}}}
{\left(\frac{n}{\pi}\sin{\frac{n_{j}\pi}{n}}\right)^{k_{a+1-j}}}
\sum_{n/2\ge l_{1}>\cdots >l_{r-a}>0}
\prod_{j=1}^{r-a}\frac{e^{\frac{\pi i}{n}(k_{a+j}-2)l_{j}}}
{\left(\frac{n}{\pi}\sin{\frac{l_{j}\pi}{n}}\right)^{k_{a+j}}}. 
\end{align*}

Motivated by the above expression we introduce the following numbers. 
For an index $\kk=(k_{1}, \ldots , k_{r})$ and a positive integer $n$, we define 
\begin{align*}
A_{n}^{-}(\kk)&=\sum_{n/2>m_{1}>\cdots >m_{r}>0}
\prod_{j=1}^{r}\frac{e^{-\frac{\pi i}{n}(k_{j}-2)m_{j}}}
{\left(\frac{n}{\pi}\sin{\frac{m_{j}\pi}{n}}\right)^{k_{j}}}, \\ 
A_{n}^{+}(\kk)&=\sum_{n/2 \ge m_{1}>\cdots >m_{r}>0}
\prod_{j=1}^{r}\frac{e^{\frac{\pi i}{n}(k_{j}-2)m_{j}}}
{\left(\frac{n}{\pi}\sin{\frac{m_{j}\pi}{n}}\right)^{k_{j}}}. 
\end{align*}
Then we see that 
\begin{align}\label{eq:zn-in-terms-of-a}
\begin{split}
&
\zn_{n}(\kk;e^{\frac{2\pi i}{n}}) =
\left(e^{\frac{\pi i}{n}}\frac{n}{\pi}\sin{\frac{\pi}{n}}\right)^{\mathrm{wt}(\mathbf{k})}  \\ 
&\qquad {}\times \sum_{a=0}^{r}(-1)^{\sum_{j=1}^{a}k_{j}} 
A_{n}^{-}(k_{a}, k_{a-1}, \ldots , k_{1}) 
A_{n}^{+}(k_{a+1}, k_{a+2}, \ldots , k_{r}).  
\end{split}
\end{align}

In order to evaluate $\zn_{n}(\kk;e^{\frac{2\pi i}{n}})$ as $n \rightarrow \infty$, we now give an asymptotic formula for $A_{n}^{+}(\kk)$. From this the asymptotic formula for $A_{n}^{-}(\kk)$ is obtained, because it is easily seen that
\renewcommand{\arraystretch}{1.5}
\begin{align}\label{eq:a-ina+}
A_{n}^{-}(k_{1}, \ldots , k_{r})=\left\{ 
\begin{array}{ll}
\overline{A_{n}^{+}(k_{1}, \ldots , k_{r})} &  (\hbox{$n$: odd}), \\
\overline{A_{n}^{+}(k_{1}, \ldots , k_{r})}+(-\frac{\pi i}{n})^{k_{1}}\, 
\overline{A_{n}^{+}(k_{2}, \ldots , k_{r})} &  
(\hbox{$n$: even}),
\end{array}
\right. 
\end{align}
where the bar on the right-hand side denotes complex conjugation.  
We begin by giving a formula for $A_{n}^{+}(\kk)$ in the case of an admissible index $\kk$.
\begin{lemma}\label{lem:T-asym-admissible}
Let $\kk$ be an admissible index. 
Then it holds that 
\begin{align*}
A_{n}^{+}(\kk)=\zeta(\kk)+O\left(\frac{(\log{n})^{J_{1}(\kk)}}{n}\right) \qquad (n \to +\infty),   
\end{align*} 
where $J_{1}(\kk)$ is a positive integer which depends on $\kk$. 
\end{lemma}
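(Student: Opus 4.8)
The plan is to compare $A_n^+(\kk)$ with the truncated multiple zeta value
\[
\widehat{\zeta}_N(\kk)=\sum_{N\ge m_1>\cdots>m_r>0}\frac{1}{m_1^{k_1}\cdots m_r^{k_r}},\qquad N=\lfloor n/2\rfloor,
\]
and then let $N\to\infty$. The key preliminary step is a factorization of the summand: writing $h_k(x)=e^{i(k-2)x}\bigl(x/\sin x\bigr)^{k}$, the general term of $A_n^+(\kk)$ equals $\prod_{j=1}^{r} m_j^{-k_j}\,h_{k_j}(m_j\pi/n)$, since $\tfrac{n}{\pi}\sin\tfrac{m\pi}{n}=m\,(x/\sin x)^{-1}$ with $x=m\pi/n$. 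Each $h_k$ is smooth on $[0,\pi/2]$ with $h_k(0)=1$, so there are constants with $|h_k(x)|\le C_k$ and $|h_k(x)-1|\le C_k'\,x$ for $0\le x\le\pi/2$; hence for $0<m\le N$ one has $|h_{k}(m\pi/n)|\le C_k$ and $|h_{k}(m\pi/n)-1|\le C_k'\pi\,m/n$. I would then split
\[
A_n^+(\kk)-\zeta(\kk)=\bigl(A_n^+(\kk)-\widehat{\zeta}_N(\kk)\bigr)+\bigl(\widehat{\zeta}_N(\kk)-\zeta(\kk)\bigr)
\]
and estimate the two differences separately.

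For the tail $\widehat{\zeta}_N(\kk)-\zeta(\kk)$, which is the negative of the part of the series with $m_1>N$, I would use admissibility $k_1\ge2$. Bounding the inner depth-$(r-1)$ sum crudely by $O((\log m_1)^{r-1})$ and comparing $\sum_{m_1>N}(\log m_1)^{r-1}m_1^{-k_1}$ to an integral yields $O\!\bigl((\log n)^{r-1}/n\bigr)$.

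The main work is the first difference. Here I would expand the product via the telescoping identity $\prod a_j-\prod b_j=\sum_{i}(\prod_{j<i}a_j)(a_i-b_i)(\prod_{j>i}b_j)$, summed over positions $1\le i\le r$, with $a_j=m_j^{-k_j}h_{k_j}(m_j\pi/n)$ and $b_j=m_j^{-k_j}$. Each resulting term carries exactly one factor $|h_{k_i}(m_i\pi/n)-1|\le C_{k_i}'\pi\,m_i/n$, which supplies the decisive $1/n$, while the remaining factors are bounded by $C\,m_j^{-k_j}$. Summing over the simplex $N\ge m_1>\cdots>m_r>0$ thus reduces, up to the factor $1/n$, to a truncated multiple harmonic sum in which the $i$-th exponent $k_i$ has been lowered by one.

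The one delicate point — and the main obstacle — is that an entry $k_i=1$ becomes a $0$ exponent after this lowering, so a naive bound would produce a linear factor in $n$ and destroy the estimate. I would handle this by summing the now-free variable $m_i$ against its left neighbour: since $\sum_{m_{i+1}<m_i<m_{i-1}}1\le m_{i-1}$, the gap absorbs into position $i-1$, replacing $m_{i-1}^{-k_{i-1}}$ by $m_{i-1}^{-(k_{i-1}-1)}$ and deleting the variable $m_i$. Iterating this absorption leftward terminates precisely because $k_1\ge2$, so no divergent leading exponent is ever created; what survives is a genuine truncated multiple harmonic sum, which grows at most like a fixed power of $\log n$. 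Collecting the powers of $\log n$ arising from all telescoping terms and from the tail estimate gives $A_n^+(\kk)-\zeta(\kk)=O\!\bigl((\log n)^{J_1(\kk)}/n\bigr)$, where $J_1(\kk)$ is the largest exponent that appears; enlarging it if necessary, we may take $J_1(\kk)$ to be a positive integer.
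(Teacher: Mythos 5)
Your proposal is correct and follows essentially the same route as the paper: the same split of $A_n^+(\kk)-\zeta(\kk)$ into the truncated difference plus the tail over $m_1>n/2$, the same telescoping of the product, the bound $|g_k(m\pi/n)-1|\le C m/n$, and the same reliance on $k_1\ge 2$ to keep the resulting sums logarithmic. The only difference is in handling an exponent that drops to zero: the paper transfers the drop to the first variable in a single step via $m_a\le m_1$ (bounding $m_1^{-k_1}\cdots m_a^{-(k_a-1)}\cdots m_r^{-k_r}$ by $m_1^{-(k_1-1)}m_2^{-k_2}\cdots m_r^{-k_r}$), whereas you iteratively absorb the freed variables into their left neighbours --- a harmless variant of the same idea.
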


\begin{proof}
Set $\kk=(k_1,\ldots,k_r)$ and define for $k\geq 1$ the function
\begin{align*}
g_k(x)=e^{(k-2)ix} \left(\frac{x}{\sin x}\right)^k \,. 
\end{align*}
Then it holds that 
$\left|A_{n}^{+}(\kk)-\zeta(\kk)\right| \le I_{1}+I_{2}$, where 
\begin{align*}
I_{1}&=\sum_{n/2\ge m_{1}>\cdots >m_{r}>0}
\prod_{j=1}^{r}\frac{1}{m_{j}^{k_{j}}} 
\left| 
\prod_{j=1}^{r}g_{k_j}\left(\frac{m_{j}\pi}{n}\right)-1
\right|, \\ 
I_{2}&=\sum_{m>n/2}\frac{1}{m^{k_{1}}} 
\left(
\sum_{m>m_{2}>\cdots >m_{r}>0}\prod_{j=2}^{r}\frac{1}{m_{j}^{k_{j}}}  
\right). 
\end{align*} 
Since $g_k(x)=1+(k-2)ix + o(x) \ (x\rightarrow +0)$, 
there exists a positive constant $C$ depending on $k$ such that 
$|g_k(m\pi/n)-1| \le Cm/n$
for all integers $m$ and $n$ satisfying $n/2\ge m>0$.
Using the identity 
\begin{align*}
\left(\prod_{j=1}^{r}x_{j}\right)-1=\sum_{a=1}^{r}\, 
(\prod_{j=1}^{a-1}x_{j}) \, (x_{a}-1)
\end{align*} 
and the inequality $0<(\sin x)^{-1}\le \pi/2x$ on the interval $(0,\frac{\pi}{2}]$, we see that 
\begin{align*}
I_{1}&\le \frac{C_{1}}{n} \sum_{a=1}^{r} \sum_{n/2\ge m_{1}>\cdots >m_{r}>0} 
\frac{1}{m_{1}^{k_{1}} \cdots m_{a}^{k_{a}-1} \cdots m_{r}^{k_{r}}} \\ 
&\le \frac{C_{1}}{n} \sum_{a=1}^{r} \sum_{n/2\ge m_{1}>\cdots >m_{r}>0} 
\frac{1}{m_{1}^{k_{1}-1} m_{2}^{k_{2}} \cdots m_{r}^{k_{r}}} \\ 
&=\frac{C_{1} r}{n} \sum_{n/2\ge m>0}\frac{1}{m^{k_{1}-1}} 
\left(
\sum_{m>m_{2}>\cdots >m_{r}>0}\prod_{j=2}^{r}\frac{1}{m_{j}^{k_{j}}}  
\right) 
\end{align*}
for some positive constant $C_{1}$ which depends on $\kk$.
Using the estimation 
\begin{align*}
\sum_{m>m_{2}>\cdots >m_{r}>0}\prod_{j=2}^{r}\frac{1}{m_{j}^{k_{j}}} \le 
\left(\sum_{s=1}^{m-1}\frac{1}{s}\right)^{r-1} \le 
(2\log{m})^{r-1},  
\end{align*}
we get
\begin{align*}
I_{1}+I_{2} \le C_{2} \left(
\frac{1}{n}\sum_{n/2>m>0}\frac{(\log{m})^{r-1}}{m^{k_{1}-1}}+
\sum_{m>n/2}\frac{(\log{m})^{r-1}}{m^{k_{1}}}
\right)
\end{align*}
for some positive constant $C_{2}$ which depends on $\kk$. 
Since $k_{1} \ge 2$ it holds that 
\begin{align*}
\sum_{n/2>m>0}\frac{(\log{m})^{r-1}}{m^{k_{1}-1}}=O((\log{n})^{r}), \quad 
\sum_{m>n/2}\frac{(\log{m})^{r-1}}{m^{k_{1}}}=O\left(\frac{(\log{n})^{r-1}}{n}\right) 
\end{align*}
as $n \to +\infty$. 
This completes the proof. 
\end{proof}
To compute the asymptotic formula for $A_{n}^{+}(\kk)$ in the case of a non-admissible index $\kk$, we need the following lemma.
\begin{lemma}\label{lem:T-asym-at-one}
We have 
\begin{align*}
A_{n}^{+}(1)=\log{\left(\frac{n}{\pi}\right)}+\gamma-\frac{\pi i}{2}+O\left(\frac{1}{n}\right) \qquad (n \to +\infty)\,,
\end{align*}
where $\gamma$ is Euler's constant.
\end{lemma}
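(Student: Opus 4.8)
The plan is to evaluate $A_{n}^{+}(1)$ directly, since for the single index $(1)$ the defining sum has depth one and reduces to an elementary trigonometric sum. Specializing the definition to $r=1$, $k_{1}=1$ (so $k_{1}-2=-1$) gives
\[
A_{n}^{+}(1)=\sum_{0<m\le n/2}\frac{e^{-\pi i m/n}}{\frac{n}{\pi}\sin(m\pi/n)}
=\frac{\pi}{n}\sum_{0<m\le n/2}\frac{e^{-\pi i m/n}}{\sin(m\pi/n)}.
\]
The first step is to rewrite the summand via Euler's formula: since $e^{-i\theta}/\sin\theta=\cot\theta-i$, setting $\theta=m\pi/n$ yields
\[
A_{n}^{+}(1)=\frac{\pi}{n}\sum_{0<m\le n/2}\cot\!\left(\frac{m\pi}{n}\right)-i\,\frac{\pi}{n}\,\Big\lfloor \tfrac{n}{2}\Big\rfloor.
\]
The imaginary part is then immediate: counting terms gives $\frac{\pi}{n}\lfloor n/2\rfloor=\frac{\pi}{2}+O(1/n)$, which produces exactly the $-\pi i/2$ and contributes only an $O(1/n)$ error.

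For the real part I would separate the principal singular behaviour of the cotangent from a smooth remainder. Writing $\cot\theta=\theta^{-1}+(\cot\theta-\theta^{-1})$ and substituting $\theta=m\pi/n$ splits the real part as
\[
\frac{\pi}{n}\sum_{0<m\le n/2}\cot\!\left(\frac{m\pi}{n}\right)
=\sum_{0<m\le n/2}\frac{1}{m}
+\frac{\pi}{n}\sum_{0<m\le n/2}\left(\cot\frac{m\pi}{n}-\frac{n}{m\pi}\right).
\]
The first sum is the harmonic number $H_{\lfloor n/2\rfloor}=\log(n/2)+\gamma+O(1/n)$. The second sum is a right-endpoint Riemann sum of mesh $\pi/n$ for the integral $\int_{0}^{\pi/2}(\cot\theta-\theta^{-1})\,d\theta$, which a direct antiderivative computation evaluates as
\[
\int_{0}^{\pi/2}\left(\cot\theta-\frac{1}{\theta}\right)d\theta
=\Big[\log\tfrac{\sin\theta}{\theta}\Big]_{0}^{\pi/2}=\log\frac{2}{\pi}.
\]
Adding the two contributions gives $\log(n/2)+\log(2/\pi)+\gamma=\log(n/\pi)+\gamma$, matching the claimed real part.

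The main technical point—and the only place where care is required—is justifying that the Riemann sum approximates its integral with error $O(1/n)$ rather than merely $o(1)$. This hinges on the fact that $f(\theta):=\cot\theta-\theta^{-1}$ extends to a continuously differentiable function on the closed interval $[0,\pi/2]$: the singularity at $\theta=0$ is removable, with $f(0)=0$ and $f(\theta)=-\theta/3+O(\theta^{3})$. For such a $C^{1}$ integrand the standard right-endpoint (Euler--Maclaurin) estimate bounds the per-subinterval error by $\tfrac12\|f'\|_{\infty}(\pi/n)^{2}$, and summing over the $O(n)$ subintervals yields total error $O(1/n)$; the small discrepancy between the upper limit $\lfloor n/2\rfloor\pi/n$ and $\pi/2$ is likewise $O(1/n)$ by boundedness of $f$. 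Assembling the real part $\log(n/\pi)+\gamma+O(1/n)$ with the imaginary part $-\pi i/2+O(1/n)$ completes the proof.
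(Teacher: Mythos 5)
Your proof is correct, and its skeleton coincides with the paper's: both split $e^{-i\theta}/\sin\theta=\cot\theta-i$, dispose of the imaginary part by counting terms, and reduce the real part to the harmonic number plus the correction constant $\log(2/\pi)$. The genuine difference is in how the correction term is controlled. The paper bounds $\sum_{n/2\ge m>0}\bigl(\tfrac{n}{\pi m}-\cot\tfrac{m\pi}{n}\bigr)$ by sandwiching it between two explicit integrals of $f(\pi x/n)$, $f(x)=x^{-1}-\cot x$, using only that $f$ is positive and increasing on $(0,\pi)$; it then evaluates both sandwich integrals via the antiderivative $g(x)=\log(1+x)-\log\cos\tfrac{\pi x}{2}$ and checks that each equals $\tfrac{n}{\pi}\log\tfrac{\pi}{2}+O(1)$. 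You instead treat $\tfrac{\pi}{n}\sum_{m}\bigl(\cot\tfrac{m\pi}{n}-\tfrac{n}{m\pi}\bigr)$ as a right-endpoint Riemann sum and invoke the standard $C^{1}$ quadrature bound, which requires verifying that $\cot\theta-\theta^{-1}$ extends smoothly through $\theta=0$ (it does, $=-\theta/3+O(\theta^{3})$), after which only the single limiting integral $\int_{0}^{\pi/2}(\cot\theta-\theta^{-1})\,d\theta=\log\tfrac{2}{\pi}$ needs evaluating, with the tidy antiderivative $\log\tfrac{\sin\theta}{\theta}$. The trade-off: the paper's monotonicity argument never needs a derivative bound (and would survive for merely monotone integrands), while your quadrature argument is more routine and localizes all the work in one smoothness check and one integral; both deliver the same $O(1/n)$ error, so either is a complete proof of the lemma.
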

\begin{proof}
From the definition of $A_{n}^{+}(1)$ we see that 
\begin{align*}
A_{n}^{+}(1)=\frac{\pi}{n}\sum_{n/2\ge m>0}\left(
\frac{\cos{\frac{m\pi}{n}}}{\sin{\frac{m\pi}{n}}}-i\right)=
\frac{\pi}{n}\sum_{n/2\ge m>0}
\frac{\cos{\frac{m\pi}{n}}}{\sin{\frac{m\pi}{n}}}-\frac{\pi i}{2}+O\left(\frac{1}{n}\right)
\end{align*} 
as $n \to +\infty$. 
Hence it suffices to show that 
\begin{align}
\frac{\pi}{n}\sum_{n/2\ge m>0}\frac{\cos{\frac{m\pi}{n}}}{\sin{\frac{m\pi}{n}}}=
\log\left(\frac{n}{\pi}\right)+\gamma+O\left(\frac{1}{n}\right) \qquad 
(n \to +\infty).  
\label{eq:T1-asym}
\end{align}

Since the function $f(x)=x^{-1}-(\tan{x})^{-1}$ is positive and increasing 
on the interval $(0,\pi)$, we see that 
\begin{align*}
\int_{0}^{\frac{n-1}{2}}f\Big(\frac{\pi x}{n}\Big)\,dx \le 
\sum_{n/2\ge m>0}\left(\frac{n}{\pi}\frac{1}{m}-\frac{\cos{\frac{m\pi}{n}}}{\sin{\frac{m\pi}{n}}}\right) \le 
\int_{1}^{\frac{n}{2}+1}f\Big(\frac{\pi x}{n}\Big)\,dx.
\end{align*}
Set $g(x)=\log{(1+x)}-\log{(\cos{\frac{\pi x}{2}})}$. 
By direct calculation we have 
\begin{align*}
& 
\int_{0}^{\frac{n-1}{2}}f\Big(\frac{\pi x}{n}\Big)\,dx=\frac{n}{\pi}\left( 
g\Big(-\frac{1}{n}\Big)+\log\left(\frac{\pi}{2}\right)
\right), \\ 
& 
\int_{1}^{\frac{n}{2}+1}f\Big(\frac{\pi x}{n}\Big)\,dx=\frac{n}{\pi}\left( 
g\Big(\frac{2}{n}\Big)+\log{\Big(\frac{n}{\pi}\sin{\frac{\pi}{n}}\Big)}+\log\left(\frac{\pi}{2}\right)
\right)\,.
\end{align*}
Since $g(x)=x+o(x) \,\, (x \to 0)$ and $\log{(x^{-1}\sin{x})}=o(x) \,\, (x \to +0)$, 
there exist positive constants $c_{1}$ and $c_{2}$ such that 
\begin{align*}
\int_{0}^{\frac{n-1}{2}}f\Big(\frac{\pi x}{n}\Big)\,dx \ge -c_{1}+\frac{n}{\pi}\log\left(\frac{\pi}{2}\right), \quad 
\int_{1}^{\frac{n}{2}+1}f\Big(\frac{\pi x}{n}\Big)\,dx \le c_{2}+\frac{n}{\pi}\log\left(\frac{\pi}{2}\right)
\end{align*}
for $n \gg 0$. 
Therefore we find that 
\begin{align*}
\frac{\pi}{n} 
\sum_{n/2\ge m>0}\frac{\cos{\frac{m\pi}{n}}}{\sin{\frac{m\pi}{n}}}=
\sum_{n/2\ge m>0}\frac{1}{m}-\log\left(\frac{\pi}{2}\right)+O\Big(\frac{1}{n}\Big) \qquad (n \to +\infty). 
\end{align*}
Using the asymptotic expansion 
\begin{align*}
\sum_{n/2\ge m>0}\frac{1}{m}=\log\left(\frac{n}{2}\right)+\gamma+O\Big(\frac{1}{n}\Big) \qquad (n \to +\infty),  
\end{align*}
we get the formula \eqref{eq:T1-asym}. 
\end{proof}

We can now compute the asymptotic formula for $A_{n}^{\pm}(\kk)$ for any index.

\begin{proposition}\label{prop:asym-A+}
For any index $\kk$ it holds that 
\begin{align}
A_{n}^{\pm}(\kk)=R_{\kk}\left(\log{\left(\frac{n}{\pi}\right)}+\gamma\mp \frac{\pi i}{2}\right)+
O\left(\frac{(\log{n})^{J(\kk)}}{n}\right) \qquad (n \to +\infty),  
\label{eq:asymptotics-T}
\end{align} 
where $\gamma$ is Euler's constant and $J(\kk)$ is a positive integer which depends on $\kk$. 
\end{proposition}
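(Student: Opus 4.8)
The plan is to reduce everything to the upper-half sums $A_n^+$ with admissible cores, peel off leading $1$'s one at a time by an approximate stuffle relation, and match the resulting polynomial against the stuffle regularization $R$. Throughout write $L_n := \log(n/\pi) + \gamma - \tfrac{\pi i}{2}$ for the argument occurring in the $+$ case.

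First I would dispose of the sign. Since every multiple zeta value is real, each coefficient of $R_\kk(T)$ lies in $\R$, so for real $t$ one has $\overline{R_\kk(t - \tfrac{\pi i}{2})} = R_\kk(t + \tfrac{\pi i}{2})$. Taking $t = \log(n/\pi)+\gamma$ and applying complex conjugation to the $+$ case, the relation \eqref{eq:a-ina+} turns the asymptotics for $A_n^+$ into those for $A_n^-$: for odd $n$ this is immediate, while for even $n$ the extra summand $(-\pi i/n)^{k_1}\overline{A_n^+(k_2,\dots,k_r)}$ is $O(1/n)$ and is absorbed into the error term. Thus it suffices to treat $A_n^+$.

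Every index can be written uniquely as $\kk = (\{1\}^s,\kk_0)$ with $s\ge 0$ and $\kk_0$ admissible or empty, and I would argue by induction on $s$. The base case $s=0$ is exactly Lemma \ref{lem:T-asym-admissible} (together with $A_n^+(\emptyset)=1$), using that $R_{\kk_0}(T)=\zeta(\kk_0)$ is constant for admissible $\kk_0$. The engine of the induction is an approximate stuffle identity
\[
A_n^+(1)\,A_n^+(\kk) = A_n^+(e_1 * e_\kk) + O\!\left(\frac{(\log n)^{J}}{n}\right),
\]
where $A_n^+$ is extended $\Q$-linearly to $\mathfrak H^1$. To prove it I would expand the product as a double sum and split according to the position of the variable $m$ of $A_n^+(1)$ relative to the variables $m_1>\cdots>m_r$ of $A_n^+(\kk)$. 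The strictly ordered configurations reproduce, term by term, the insertion part of $e_1 * e_\kk$, namely the words $e_1 e_\kk$ and the $e_{\kk''(a)}$. A collision $m=m_a$ produces the summand of $A_n^+(\kk'(a))$ multiplied by $e^{-2\pi i m_a/n}$; writing $e^{-2\pi i m_a/n}=1+(e^{-2\pi i m_a/n}-1)$ splits off the exact stuffle contribution $A_n^+(\kk'(a))$, while the remainder is controlled by $|e^{-2\pi i m_a/n}-1|\le 2\pi m_a/n$, which lowers the effective exponent at position $a$ by one and produces a factor $1/n$; the surviving harmonic sum is $O((\log n)^{s-1})$, so each collision correction is $O((\log n)^{J}/n)$.

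Finally, applying the identity with $\kk=(\{1\}^{s-1},\kk_0)$ and expanding $e_1 * (e_1^{s-1}e_{\kk_0})$ by \eqref{eq:e1-star-prod-reduction}, the only word with $s$ leading $1$'s is $e_1^s e_{\kk_0}$, occurring with coefficient $s$; every other word on the right has strictly fewer than $s$ leading $1$'s and an admissible or empty core, so the induction hypothesis applies. Substituting $A_n^+(1)=L_n+O(1/n)$ (Lemma \ref{lem:T-asym-at-one}) and the inductive asymptotics, and dividing by $s$, yields $A_n^+(\{1\}^s,\kk_0)=P(L_n)+O((\log n)^{J}/n)$ for an explicit polynomial $P$. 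Because $R$ is an algebra homomorphism for $*$, the very same expansion gives $T\,R_{\{1\}^{s-1},\kk_0}(T)=s\,R_{\{1\}^s,\kk_0}(T)+(\text{lower terms})$, whose lower terms are, by the induction hypothesis, precisely the lower terms of $P$; hence $P=R_{\{1\}^s,\kk_0}$, the induction closes, and the outcome is consistent with \eqref{eq:formula-of-reg-MZV}. I expect the main obstacle to be the error analysis of the approximate stuffle relation, in particular verifying that the collision corrections remain of strictly lower order uniformly in $s$ and for non-admissible cores, where the surviving sums grow like powers of $\log n$.
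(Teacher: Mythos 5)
Your proposal is correct and takes essentially the same approach as the paper: reduce $A_n^-$ to $A_n^+$ via \eqref{eq:a-ina+} and complex conjugation, induct on the number of leading $1$'s, drive the induction with the product $A_n^+(1)A_n^+(\{1\}^{s},\kk_0)$ expanded stuffle-wise, and identify the limit polynomial using \eqref{eq:e1-star-prod-reduction}, Lemma \ref{lem:T-asym-at-one} and the homomorphism property of $R$. The only cosmetic difference is that the paper keeps the collision corrections exact (as explicit $-\tfrac{2\pi i}{n}A_n^+(\cdots)$ terms coming from $e^{-2\pi i m/n}-1=-\tfrac{2\pi i}{n}\cdot\tfrac{n}{\pi}\sin\tfrac{m\pi}{n}\,e^{-\pi i m/n}$), whereas you bound them; this is the same computation.
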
 
\begin{proof}
Let $\kk$ be an admissible index and $s$ a non-negative integer. 
We prove the formula \eqref{eq:asymptotics-T} for $A_{n}^{+}(\{1\}^{s}, \kk)$ by induction on $s$.
The case $s=0$ holds because of Lemma \ref{lem:T-asym-admissible}. 
Assume that the formula for $A_{n}^{+}(\{1\}^{s}, \kk)$ holds for $s>0$.
Now use the identity 
\begin{align*}
\frac{e^{-\frac{\pi i}{n}m}}{\frac{n}{\pi}\sin{\frac{m\pi}{n}}} 
\frac{e^{\frac{\pi i}{n}(k-2)m}}{\left(\frac{n}{\pi}\sin{\frac{m\pi}{n}}\right)^{k}}=
\frac{e^{\frac{\pi i}{n}(k-1)m}}{\left(\frac{n}{\pi}\sin{\frac{m\pi}{n}}\right)^{k+1}}-\frac{2\pi i}{n}
\frac{e^{\frac{\pi i}{n}(k-2)m}}{\left(\frac{n}{\pi}\sin{\frac{m\pi}{n}}\right)^{k}},  
\end{align*} 
for $k \ge 1$ and $n/2\ge m>0$ to obtain 
\begin{align*}
A_{n}^{+}(1)A_{n}^{+}(\{1\}^{s}, \kk)&=(s+1)A_{n}^{+}(\{1\}^{s+1}, \kk) \\ 
&+\sum_{a=1}^{r}\left( 
A_{n}^{+}(\{1\}^{s}, \kk'(a))+A_{n}^{+}(\{1\}^{s}, \kk''(a))-\frac{2\pi i}{n}A_{n}^{+}(\{1\}^{s}, \kk)
\right)\\
&+\sum_{b=1}^{s}\left( 
A_{n}^{+}(\{1\}^{b-1}, 2, \{1\}^{s-b}, \kk)-\frac{2\pi i}{n}A_{n}^{+}(\{1\}^{s}, \kk)
\right),
\end{align*}
where $\kk'(a)$ and $\kk''(a)$ are the indices defined by \eqref{eq:kk-primes}. 
With this the desired formula \eqref{eq:asymptotics-T} can be verified by \eqref{eq:e1-star-prod-reduction} and Lemma \ref{lem:T-asym-at-one}.
Note that the formula for $A^-_n(\kk)$ is then immediate from \eqref{eq:a-ina+}.
\end{proof}

The evaluation of $\zn_{n}(\kk;e^{\frac{2\pi i}{n}})$ for $n\rightarrow \infty$ is now given as follows. 
\begin{theorem}\label{thm:limit-in-terms-of-Z}
For any non-empty index $\kk=(k_{1}, \ldots , k_{r})$ it holds that 
\begin{align*}
& 
\lim_{n \to \infty}\zn_{n}(\kk;e^{\frac{2\pi i}{n}}) =\sum_{a=0}^{r}(-1)^{k_1+\cdots+k_a}
R_{k_{a}, k_{a-1}, \ldots , k_{1}}\Big(\frac{\pi i}{2}\Big)
R_{k_{a+1}, k_{a+2}, \ldots , k_{r}}\Big(-\frac{\pi i}{2}\Big).  
\end{align*} 
\end{theorem}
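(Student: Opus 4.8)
The plan is to feed the exact decomposition \eqref{eq:zn-in-terms-of-a} into the asymptotic expansion of Proposition \ref{prop:asym-A+} and then to exploit the $T$-independence established in Lemma \ref{lem:ZZ-indep}. First I would record the behaviour of the scalar prefactor: as $n\to\infty$ one has $e^{\pi i/n}=1+O(1/n)$ and $\tfrac{n}{\pi}\sin\tfrac{\pi}{n}=1+O(1/n^2)$, so that
\begin{align*}
\left(e^{\frac{\pi i}{n}}\frac{n}{\pi}\sin\frac{\pi}{n}\right)^{\wt(\kk)}=1+O\!\left(\frac{1}{n}\right).
\end{align*}
Writing $T_{n}=\log(n/\pi)+\gamma$, Proposition \ref{prop:asym-A+} reads $A_{n}^{\pm}(\kk)=R_{\kk}\!\left(T_{n}\mp\tfrac{\pi i}{2}\right)+O\!\left((\log n)^{J(\kk)}/n\right)$, so in particular $A_{n}^{-}(k_{a},\ldots,k_{1})=R_{k_{a},\ldots,k_{1}}\!\left(T_{n}+\tfrac{\pi i}{2}\right)+\cdots$ and $A_{n}^{+}(k_{a+1},\ldots,k_{r})=R_{k_{a+1},\ldots,k_{r}}\!\left(T_{n}-\tfrac{\pi i}{2}\right)+\cdots$.

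Substituting these into the inner sum of \eqref{eq:zn-in-terms-of-a}, each product $A_{n}^{-}(k_{a},\ldots,k_{1})\,A_{n}^{+}(k_{a+1},\ldots,k_{r})$ becomes
\begin{align*}
R_{k_{a},\ldots,k_{1}}\!\left(T_{n}+\frac{\pi i}{2}\right)R_{k_{a+1},\ldots,k_{r}}\!\left(T_{n}-\frac{\pi i}{2}\right)+O\!\left(\frac{(\log n)^{J'}}{n}\right),
\end{align*}
for some integer $J'$: each regularized value $R_{\kk}\!\left(T_{n}\mp\tfrac{\pi i}{2}\right)$ is a polynomial in $T_{n}$ and hence grows at most like a power of $\log n$, so the cross terms pairing a main factor with an $O((\log n)^{J}/n)$ error factor are again $O((\log n)^{J'}/n)$. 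Summing over $a$ with the signs $(-1)^{k_{1}+\cdots+k_{a}}$, the leading contribution is exactly the polynomial \eqref{eq:Z-star-symm-pol} evaluated at $T=T_{n}$ and $X=\tfrac{\pi i}{2}$.

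The conceptual heart is now Lemma \ref{lem:ZZ-indep}: this leading sum is independent of $T_{n}$ and therefore equals the genuine constant
\begin{align*}
S=\sum_{a=0}^{r}(-1)^{k_{1}+\cdots+k_{a}}R_{k_{a},k_{a-1},\ldots,k_{1}}\!\left(\frac{\pi i}{2}\right)R_{k_{a+1},k_{a+2},\ldots,k_{r}}\!\left(-\frac{\pi i}{2}\right),
\end{align*}
which is precisely the right-hand side of the theorem. Thus the inner sum equals $S+O\!\left((\log n)^{J''}/n\right)$, and multiplying by the prefactor $1+O(1/n)$ gives $S+O\!\left((\log n)^{J''}/n\right)+O(1/n)$; letting $n\to\infty$ yields the claim. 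I expect the only genuine care to lie in the bookkeeping of the error terms — one must confirm that the $\log n$-growth of the $R_{\kk}\!\left(T_{n}\mp\tfrac{\pi i}{2}\right)$ does not overwhelm the $1/n$ decay, which is immediate since a polynomial in $\log n$ times $1/n$ still tends to $0$. What makes these potentially divergent $\log n$ terms harmless at the level of the limit is exactly the $T$-independence of Lemma \ref{lem:ZZ-indep}, which collapses the leading sum to a constant \emph{before} the limit is taken, rather than relying on any cancellation in the limit itself.
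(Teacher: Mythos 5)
Your proposal is correct and is exactly the paper's argument: the paper's proof of Theorem \ref{thm:limit-in-terms-of-Z} is the single sentence that it ``follows from Lemma \ref{lem:ZZ-indep}, Proposition \ref{prop:asym-A+} and \eqref{eq:zn-in-terms-of-a},'' and your write-up simply makes explicit the same combination of these three ingredients, including the routine bookkeeping (prefactor $1+O(1/n)$, cross terms of size $O((\log n)^{J'}/n)$, and the use of $T$-independence to collapse the leading sum to the constant $S$ before passing to the limit).
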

\begin{proof}
This follows from Lemma \ref{lem:ZZ-indep}, 
Proposition \ref{prop:asym-A+} and \eqref{eq:zn-in-terms-of-a}.
\end{proof}

\begin{remark} As mentioned earlier, there are several different $q$-analogue models of multiple zeta values in the literature and our definition of $\zn_{n}(\kk;q)$ corresponds to the Bradley-Zhao model. For other models an analogue of Theorem \ref{thm:limit-in-terms-of-Z} does not necessarily exist, since for example one can prove the formula
\begin{align*}
\sum_{n > m_1 > m_2 >0} 
\frac{q^{m_1} }{ [m_1]_q  [m_2]_q}\,\Bigr|_{q = e^{\frac{2\pi i}{n}} } = 2  \zeta(2) + 2 \pi i \left( \log\left( \frac{n}{2\pi}  \right) + \gamma  \right) + O\left(\frac{\log{n}}{n}\right) \qquad (n \to +\infty)\, , 
\end{align*} 
which would correspond to the Ohno-Okuda-Zudilin model (\cite{OhnoOkudaZudilin}) for the index $\kk = (1,1)$. 
\end{remark}

\subsubsection{Proof of Theorem \ref{main1}}


For the later purpose we introduce the following complex numbers. 

\begin{definition}\label{def:lim-value-Z}
For a non-empty index $\kk$ we define
\begin{align*}
\z(\kk) =\lim\limits_{n\to \infty} \zn_n(\kk;e^{\frac{2\pi i}{n}}) \quad \hbox{and} \quad 
\z^\star(\kk) =\lim\limits_{n\to \infty} \zn_n^\star(\kk;e^{\frac{2\pi i}{n}})
\end{align*}
and set $\z(\emptyset)=\z^\star(\emptyset)=1$.
\end{definition}
Theorem \ref{thm:limit-in-terms-of-Z} implies that 
\begin{align}
\z(k_1,\ldots,k_r) =\sum_{a=0}^{r}(-1)^{k_1+\cdots+k_a}
R_{k_{a}, k_{a-1}, \ldots , k_{1}}\Big(\frac{\pi i}{2}\Big)
R_{k_{a+1}, k_{a+2}, \ldots , k_{r}}\Big(-\frac{\pi i}{2}\Big),
\label{eq:formula-of-z}  
\end{align}
and
\begin{align}
\z^\star(k_1,\ldots,k_r) =
\sum_{\substack{\footnotesize \square \ \mbox{is `,' or `$+$'}}}
\z(k_1 \square \cdots \square k_r),
\label{eq:formula-of-zs} 
\end{align}
which follows from \eqref{eq:zn-star-to-nostar} and 
$(1-e^{2\pi i/n})^k \zn_n(\kk;e^{2\pi i/n}) \to 0 \, (n \to +\infty)$ for $k>0$.
If $\kk=(k_{1}, \ldots , k_{r})$ is an index with $k_{j} \ge 2$ for all $1\le j \le r$, 
we have the equalities $\z(\kk) = \zeta_{\mathcal{S}}(\kk)$ and $\z^\star(\kk) = \zeta_{\mathcal{S}}^\star (\kk)$ from 
Definition \ref{def:symMZV}, 
and hence $\z(\kk),\z^\star(\kk) \in\R$.

\begin{example}
{}Using \eqref{eq:formula-of-z} one can write down the value $\z(k)$ of depth one: 
\renewcommand{\arraystretch}{1.1}
\begin{align*}
\z(k)=\left\{
\begin{array}{ll} -\pi i &  (k=1) \\
2\zeta(k) & (k\ge 2, \, \hbox{$k$ is even}) \\ 
0 & (k\ge 3, \, \hbox{$k$ is odd})
\end{array}
\right. 
\end{align*}
\renewcommand{\arraystretch}{1}
\end{example}

We are now in a position to prove Theorem \ref{main1}.

\begin{proof}[Proof of Theorem \ref{main1}]
The convergence is already proved.  
{}From \eqref{eq:formula-of-reg-MZV} we see that the coefficient of $T^a$ in the polynomial $R_\kk(T)$ 
lies in $\mathcal{Z}$ for any $a \ge 0$.
Hence the formulas \eqref{eq:sym-rem} and \eqref{eq:formula-of-z} imply that 
$\mathrm{Re}(\z(\kk))-\zeta_{\mathcal{S}}(\kk)$ is a polynomial of $\pi^{2}$ 
whose coefficients belong to $\mathcal{Z}$. 
Therefore $\mathrm{Re}(\z(\kk)) \equiv \zeta_{\mathcal{S}}(\kk)$ modulo $\zeta(2)\mathcal{Z}$. 
The star version is then immediate from \eqref{eq:formula-of-zs}.
\end{proof}


\subsection{Duality formula}\label{sec:duality}
In this subsection, we prove Theorem \ref{main3} and use it to give new proofs of the duality formulas for the finite and the symmetric multiple zeta star values.

\subsubsection{Notation}\label{subsec:index-duality}

For an index $\kk=(k_{1}, \ldots , k_{r})$ we define its \textit{reverse} $\overline{\kk}$ by 
\begin{align*}
\overline{\kk}=(k_{r}, k_{r-1}, \ldots , k_{1}).  
\end{align*}

Let $\tau$ be the automorphism on $\h$ given by $\tau(e_1)=e_0$ and $\tau(e_0)=e_1$. 
Every word $w \in \h^1$ can be written as $w= w^\prime e_1$ with $w^\prime \in \h$. 
Then we set $w^\vee = \tau(w^\prime) e_1 \in \h^1$ and call it the \emph{Hoffman dual} of $w$. 
We also define the Hoffman dual $\kk^{\vee}$ of an index $\kk$ by 
\[ e_{\kk^{\vee}}=(e_{\kk})^{\vee}.\] 
For example, the Hoffman dual of the word $e_3 e_2$ is given by
\begin{align*}
(e_3 e_2)^\vee=(e_0e_0e_1e_0e_1)^\vee = \tau(e_0e_0e_1e_0)e_1 = e_1e_1e_0e_1 e_1 = e_1 e_1 e_2 e_1 \,. 
\end{align*}
Hence $(3, 2)^{\vee}=(1, 1, 2, 1)$. 
Note that $\wt(\kk^{\vee})=\wt(\kk)$ for any index $\kk$.

\subsubsection{Proof of Theorem \ref{main3}}

We will use the following fact. 

\begin{lemma}\label{lem:q-binom-power}
Suppose that $n \ge 1$ and $\zeta_n$ is a primitive $n$-th root of unity. 
Then it holds that $(-1)^{n}\zeta_n^{n(n+1)/2}=-1$.  
\end{lemma}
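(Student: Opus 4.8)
The plan is to split into the two parities of $n$, since the exponent $n(n+1)/2$ interacts with the defining relation $\zeta_n^n=1$ quite differently depending on whether the factor of $2$ in the denominator is absorbed by $n$ or by $n+1$.

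First I would handle the case $n$ odd. Then $(n+1)/2$ is an integer, so
\[
\zeta_n^{n(n+1)/2}=(\zeta_n^n)^{(n+1)/2}=1,
\]
and hence $(-1)^n\zeta_n^{n(n+1)/2}=(-1)^n=-1$, as required.

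For $n$ even, $(n+1)/2$ is no longer an integer, so I would instead group the exponent as $n(n+1)/2=(n/2)(n+1)$ and write
\[
\zeta_n^{n(n+1)/2}=\bigl(\zeta_n^{n/2}\bigr)^{n+1}.
\]
The crucial step is to identify $\zeta_n^{n/2}$: it squares to $\zeta_n^n=1$, so it equals $\pm1$, and it cannot equal $1$ because $\zeta_n$ has exact order $n>n/2$. Thus $\zeta_n^{n/2}=-1$, and since $n+1$ is odd we obtain $\zeta_n^{n(n+1)/2}=(-1)^{n+1}=-1$. Combined with $(-1)^n=1$, this again yields $(-1)^n\zeta_n^{n(n+1)/2}=-1$, in agreement with the odd case.

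I expect no real obstacle here; the statement is elementary and follows purely from $\zeta_n^n=1$ together with primitivity. The only point requiring care is the even case, where one must avoid treating $(n+1)/2$ as an integer and must invoke the primitivity of $\zeta_n$ to conclude $\zeta_n^{n/2}=-1$ rather than $+1$.
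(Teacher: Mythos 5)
Your proof is correct. There is in fact nothing in the paper to compare it against: the lemma is introduced with ``We will use the following fact'' and no proof is supplied, so your argument fills a gap the authors left to the reader. The parity split is sound, and you are right that the only delicate point is the even case, where primitivity is what forces $\zeta_n^{n/2}=-1$ rather than $+1$; the odd case and the degenerate case $n=1$ (where $\zeta_1=1$) are handled correctly by your first branch. If you want a uniform argument avoiding the case distinction: since $\zeta_n$ is primitive, $\zeta_n^1,\zeta_n^2,\dots,\zeta_n^n$ are exactly the $n$ roots of $X^n-1$, so $\prod_{j=1}^{n}\bigl(X-\zeta_n^j\bigr)=X^n-1$; evaluating at $X=0$ gives
\[
(-1)^n\,\zeta_n^{1+2+\cdots+n}=(-1)^n\,\zeta_n^{n(n+1)/2}=-1
\]
in one line. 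Both routes use primitivity in an essential way, just packaged differently: yours to identify the value of $\zeta_n^{n/2}$, the uniform one to identify the full set of roots of $X^n-1$.
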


\begin{proof}[Proof of Theorem \ref{main3}]
Note that any index is uniquely written in the form 
\begin{align}
(\{1\}^{a_{1}-1}, b_{1}+1, \ldots , \{1\}^{a_{r-1}-1}, b_{r-1}+1, \{1\}^{a_{r}-1}, b_{r}),  
\label{eq:index-ab-rep}
\end{align}  
where $r$ and $a_{i}, b_{i} \, (1\le i \le r)$ are positive integers
\footnote{If $r=1$, \eqref{eq:index-ab-rep} should read as $(\{1\}^{a_{1}-1}, b_{1})$.}. 
Denote it by $[a_{1}, \ldots , a_{r}; b_{1}, \ldots , b_{r}]$. 
Then we see that 
\begin{align*}
\overline{[a_{1}, \ldots , a_{r}; b_{1}, \ldots , b_{r}]^{\vee}}=
[b_{r}, \ldots , b_{1}; a_{r}, \ldots , a_{1}]. 
\end{align*}

Now we fix a positive integer $r$ and introduce the generating function 
\begin{align*}
K(x_{1}, \ldots , x_{r}; y_{1}, \ldots , y_{r})=\sum
\frac{z_{n}^{\star}([a_{1}, \ldots , a_{r}; b_{1}, \ldots , b_{r}]; \zeta_n)}{(1-\zeta_n)^{a_1+\cdots+a_r+b_1+\cdots+b_r-1}}
\prod_{i=1}^{r}(x_{i}^{a_{i}-1}y_{i}^{b_{i}-1}),   
\end{align*}
where the sum is taken over all positive integers $a_{i}, b_{i} \, (1\le i \le r)$. 
Then Theorem \ref{main3} follows from the equality 
\begin{align}
K(x_{1}, \ldots , x_{r}; y_{1}, \ldots , y_{r})=K(-y_{r}, \ldots , -y_{1}; -x_{r}, \ldots , -x_{1}). 
\label{eq:zstar-duality-proof1}
\end{align}

Let us prove \eqref{eq:zstar-duality-proof1}. 
It holds that 
\begin{align*}
1+\sum_{a=2}^{\infty}\sum_{B\ge m_{1}\ge \cdots \ge m_{a-1} \ge A}
\frac{x^{a-1}}{\prod_{i=1}^{a-1}(1-\zeta_{n}^{m_{i}})}=
\prod_{i=A}^{B}\frac{1-\zeta_{n}^{i}}{1-x-\zeta_{n}^{i}} 
\end{align*}
for $n>B\ge A>0$, and that  
\begin{align*}
\sum_{b=1}^{\infty}\frac{\zeta_{n}^{bm}}{(1-\zeta_{n}^{m})^{b+1}}y^{b-1}=
\frac{1}{1-\zeta_{n}^{m}}\,\frac{\zeta_{n}^{m}}{1-\zeta_{n}^{m}(1+y)} 
\end{align*}
for $n>m>0$. 
Using the above formulas we have 
\begin{align*}
& 
K(x_{1}, \ldots , x_{r}; y_{1}, \ldots , y_{r})=\sum_{n>l_{1}\ge \cdots \ge l_{r}>0}
\prod_{i=l_{r}}^{n-1}(1-\zeta_n^{i}) \\ 
&\times 
\prod_{j=1}^{r-1}\left(\frac{\zeta_n^{l_{j}}}{1-\zeta_n^{l_{j}}(1+y_{j})}
\prod_{i=l_{j}}^{l_{j-1}}\frac{1}{1-x_{j}-\zeta_n^{i}}\right)
\frac{1}{1-\zeta_n^{l_{r}}(1+y_{r})}\prod_{i=l_{r}}^{l_{r-1}}\frac{1}{1-x_{r}-\zeta_n^{i}},  
\end{align*}
where $l_{0}=n-1$. 
Rewrite the right-hand side above by using the partial fraction expansion 
\begin{align*}\
\prod_{i=A}^{B}\frac{1}{X-\zeta_{n}^{i}}&= 
\sum_{i=A}^{B} \frac{1}{X-\zeta_{n}^i} \prod_{j=A}^{i-1} \frac{1}{\zeta_{n}^i-\zeta_{n}^j} 
\prod_{j=i+1}^{B} \frac{1}{\zeta_{n}^i-\zeta_{n}^j}
\notag \\
&=\sum_{t=A}^{B}\frac{1}{X-\zeta_{n}^{t}}
\frac{(-1)^{B-t}\zeta_{n}^{-\binom{B+1}{2}+At-\binom{t}{2}}}
{\prod_{i=1}^{t-A}(1-\zeta_{n}^{-i})\prod_{i=1}^{B-t}(1-\zeta_{n}^{-i})} 
\end{align*}
for $n>B\ge A>0$. 
Then we find that 
\begin{align*}
&
K(x_{1}, \ldots , x_{r}; y_{1}, \ldots , y_{r}) \\ 
&=\sum_{n>t_{1}\ge l_{1}\ge \cdots \ge t_{r}\ge l_{r}>0}
\prod_{i=l_{r}}^{n-1}(1-\zeta_n^{i}) 
(-1)^{\sum_{j=1}^{r}(l_{j-1}-t_{j})}
\zeta_{n}^{\sum_{j=1}^{r}(-\binom{l_{j-1}+1}{2}+l_{j}t_{j}-\binom{t_{j}}{2})} 
\nonumber \\ 
&\qquad \qquad {}\times 
\prod_{j=1}^{r}\left(\prod_{i=1}^{t_{j}-l_{j}}\frac{1}{1-\zeta_n^{-i}}
\prod_{i=1}^{l_{j-1}-t_{j}}\frac{1}{1-\zeta_n^{-i}}\right) 
\nonumber \\ 
&\qquad \qquad {}\times 
\prod_{j=1}^{r-1}\left(\frac{\zeta_n^{l_{j}}}{1-\zeta_n^{l_{j}}(1+y_{j})}\frac{1}{1-x_{j}-\zeta_n^{t_{j}}}\right)
\frac{1}{1-\zeta_n^{l_{r}}(1+y_{r})}\frac{1}{1-x_{r}-\zeta_n^{t_{r}}}.   
\end{align*}
Now change the summation variable $t_{j}$ and $l_{j}$ to 
$n-l_{r+1-j}$ and $n-t_{r+1-j}$, respectively ($1\le j \le r$). 
As a result we get the desired equality \eqref{eq:zstar-duality-proof1} using Lemma \ref{lem:q-binom-power}.  
\end{proof}

\subsubsection{Duality formula for the finite and symmetric multiple zeta star values}

In \cite[Theorem 4.5]{Hoffman} the reversal relations of the finite multiple zeta (star) values are shown:
\begin{equation}\label{eq:rev-FMZV}
\zeta_{\mathcal{A}}(\kk)=(-1)^{\wt(\kk)}\zeta_{\mathcal{A}}(\overline{\kk}),\ \qquad\zeta_{\mathcal{A}}^\star(\kk)=(-1)^{\wt(\kk)}\zeta_{\mathcal{A}}^\star(\overline{\kk}),
\end{equation}
which are almost immediate from the definition.
We now give a new proof of the duality formula for the finite multiple zeta star value using our results.

\begin{theorem}(Hoffman \cite[Theorems 4.5]{Hoffman})
For any index $\kk$, we have
\[\zeta_{\mathcal{A}}^\star(\mathbf{k})=-\zeta_{\mathcal{A}}^\star(\mathbf{k}^{\vee}).\]
\end{theorem}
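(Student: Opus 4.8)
The plan is to combine the three ingredients already established: the identification of $\zeta_{\mathcal{A}}^\star(\kk)$ with the reduction of $\zn_p^\star(\kk;\zeta_p)$ modulo $\mathfrak{p}_p$ (Theorem \ref{main2}), the duality for the $q$-series values at roots of unity (Theorem \ref{main3}), and the star reversal relation \eqref{eq:rev-FMZV}. The idea is that Theorem \ref{main3} is a statement at each finite level $n$, so specializing it to prime level $n=p$ and reducing modulo $\mathfrak{p}_p$ transports the duality directly into the ring $\mathcal{A}$.

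First I would specialize Theorem \ref{main3} to the case $n=p$ prime, obtaining
\[
\zn_p^\star(\kk;\zeta_p)=(-1)^{\wt(\kk)+1}\,\zn_p^\star(\overline{\kk^\vee};\zeta_p)
\]
for every prime $p$ and every primitive $p$-th root of unity $\zeta_p$. Since the reduction map $\Z[\zeta_p]\to \Z[\zeta_p]/\mathfrak{p}_p=\F_p$ is a ring homomorphism under which the integer sign $(-1)^{\wt(\kk)+1}$ is preserved, I would reduce this identity modulo $\mathfrak{p}_p$ and pass to the tuple indexed by all primes $p$. By Theorem \ref{main2} this yields, in $\mathcal{A}$,
\[
\zeta_{\mathcal{A}}^\star(\kk)=(-1)^{\wt(\kk)+1}\,\zeta_{\mathcal{A}}^\star(\overline{\kk^\vee}).
\]

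Next I would remove the reverse. Applying the reversal relation \eqref{eq:rev-FMZV} to the index $\overline{\kk^\vee}$ gives $\zeta_{\mathcal{A}}^\star(\overline{\kk^\vee})=(-1)^{\wt(\overline{\kk^\vee})}\,\zeta_{\mathcal{A}}^\star(\kk^\vee)$. Because both the Hoffman dual and the reversal preserve weight (as noted after the definition of $\kk^\vee$), we have $\wt(\overline{\kk^\vee})=\wt(\kk)$, so that sign equals $(-1)^{\wt(\kk)}$. Substituting into the previous display collapses the two signs:
\[
\zeta_{\mathcal{A}}^\star(\kk)=(-1)^{\wt(\kk)+1}(-1)^{\wt(\kk)}\,\zeta_{\mathcal{A}}^\star(\kk^\vee)=-\zeta_{\mathcal{A}}^\star(\kk^\vee),
\]
which is the assertion.

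There is essentially no analytic or combinatorial obstacle in this argument: all of the content has been front-loaded into Theorem \ref{main3}, whose proof (via the generating function $K$ and the partial fraction expansion) is the genuinely hard step. The only points requiring care are bookkeeping: checking that Theorem \ref{main3} applies verbatim at $n=p$ with $\zeta_p$ a primitive $p$-th root of unity, confirming that reduction modulo $\mathfrak{p}_p$ is compatible with the integer prefactor, and tracking the weight invariance of the map $\kk\mapsto\overline{\kk^\vee}$ so that the two $\pm$ signs combine to $-1$ rather than $+1$.
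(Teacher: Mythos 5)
Your proposal is correct and follows exactly the paper's own argument: the paper proves this theorem in one line as "a consequence of Theorem \ref{main3} and \ref{main2} and \eqref{eq:rev-FMZV}," which is precisely the chain you carry out (specialize the root-of-unity duality to $n=p$, reduce modulo $\mathfrak{p}_p$ via Theorem \ref{main2}, then cancel the reversal using \eqref{eq:rev-FMZV} and weight invariance of $\kk\mapsto\kk^\vee$). Your sign bookkeeping, $(-1)^{\wt(\kk)+1}(-1)^{\wt(\kk)}=-1$, is the correct filling-in of the details the paper leaves implicit.
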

\begin{proof}
This is a consequence of Theorem \ref{main3} and \ref{main2} and \eqref{eq:rev-FMZV}.
\end{proof}

We will show the duality formula for the symmetric multiple zeta star value.
To see this, we first note that the values $\xi(\mathbf{k})$ and $\xi^{\star}(\mathbf{k})$ 
have the following properties. 
\begin{theorem}\label{thm:z-duality}
For any index $\kk$, the following relations hold. 
\begin{enumerate}[i)]
 \item $\z(\overline{\kk})=(-1)^{\wt(\kk)}\,\overline{\z(\kk)}, \,\, 
\z^{\star}(\overline{\kk})=(-1)^{\wt(\kk)}\,\overline{\z^{\star}(\kk)}$ 
 \item $\z^{\star}(\kk^{\vee})=-\,\overline{\z^{\star}(\kk)}$
\end{enumerate} 
Here the bar on the right-hand sides denotes complex conjugation. 
\end{theorem}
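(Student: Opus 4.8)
The plan is to deduce all three relations from the explicit formula \eqref{eq:formula-of-z} for $\z(\kk)$, together with two structural facts already in hand: first, that every coefficient of the polynomial $R_\kk(T)$ lies in $\mathcal{Z}\subseteq\R$ (as recorded in the proof of Theorem \ref{main1}, via \eqref{eq:formula-of-reg-MZV}); and second, the finite-level duality of Theorem \ref{main3}. The reality of the coefficients is the decisive input: it gives $\overline{R_\kk(\tfrac{\pi i}{2})}=R_\kk(-\tfrac{\pi i}{2})$ for every index $\kk$, so that complex conjugation acts on \eqref{eq:formula-of-z} simply by interchanging the two arguments $\pm\tfrac{\pi i}{2}$.

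For the reversal relation in i), I would apply \eqref{eq:formula-of-z} to $\overline{\kk}=(k_r,\dots,k_1)$ and re-index the outer sum by $b=r-a$. Writing $(-1)^{k_{b+1}+\cdots+k_r}=(-1)^{\wt(\kk)}(-1)^{k_1+\cdots+k_b}$ turns the expression into
\[
\z(\overline{\kk})=(-1)^{\wt(\kk)}\sum_{b=0}^{r}(-1)^{k_1+\cdots+k_b}R_{k_{b+1},\dots,k_r}\!\Big(\tfrac{\pi i}{2}\Big)R_{k_b,\dots,k_1}\!\Big(-\tfrac{\pi i}{2}\Big).
\]
On the other hand, applying the conjugation rule above to \eqref{eq:formula-of-z} gives
\[
\overline{\z(\kk)}=\sum_{b=0}^{r}(-1)^{k_1+\cdots+k_b}R_{k_b,\dots,k_1}\!\Big(-\tfrac{\pi i}{2}\Big)R_{k_{b+1},\dots,k_r}\!\Big(\tfrac{\pi i}{2}\Big),
\]
and the two displays match term by term, yielding $\z(\overline{\kk})=(-1)^{\wt(\kk)}\overline{\z(\kk)}$. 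The star reversal then follows from \eqref{eq:formula-of-zs}: reversal is a weight-preserving bijection on the set of contractions $(k_1\square\cdots\square k_r)$, so summing the identity $\z(\overline{\mathbf a})=(-1)^{\wt(\mathbf a)}\overline{\z(\mathbf a)}=(-1)^{\wt(\kk)}\overline{\z(\mathbf a)}$ over all contractions $\mathbf a$ produces $\z^{\star}(\overline{\kk})=(-1)^{\wt(\kk)}\overline{\z^{\star}(\kk)}$.

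Finally, for ii) I would pass to the limit $n\to\infty$ with $\zeta_n=e^{2\pi i/n}$ in Theorem \ref{main3}, obtaining $\z^{\star}(\kk)=(-1)^{\wt(\kk)+1}\z^{\star}(\overline{\kk^\vee})$. Applying the star reversal from i) to the index $\kk^\vee$ and using $\wt(\kk^\vee)=\wt(\kk)$ gives $\z^{\star}(\overline{\kk^\vee})=(-1)^{\wt(\kk)}\overline{\z^{\star}(\kk^\vee)}$; substituting and simplifying the sign $(-1)^{2\wt(\kk)+1}=-1$ yields $\z^{\star}(\kk)=-\overline{\z^{\star}(\kk^\vee)}$, which is equivalent to the claimed $\z^{\star}(\kk^\vee)=-\overline{\z^{\star}(\kk)}$ after a further conjugation. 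I do not expect a genuine obstacle here: the only delicate points are the index bookkeeping in the substitution $b=r-a$ and the verification that reversed contractions in the star sum are matched correctly, but these are purely combinatorial, and the sign arithmetic is entirely forced once the reality of the $R$-coefficients is established.
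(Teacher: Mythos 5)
Your proof is correct, but part i) takes a genuinely different route from the paper. The paper proves i) at the finite level: the substitution $m_j \mapsto n-m_{r+1-j}$ in the defining sum yields the exact identity $\zn_n(\overline{\kk};e^{2\pi i/n})=(-e^{2\pi i/n})^{\wt(\kk)}\,\overline{\zn_n(\kk;e^{2\pi i/n})}$ for every $n$ (and likewise for the star sums), and then lets $n\to\infty$. You instead derive i) from the already-established limit evaluation \eqref{eq:formula-of-z} (Theorem \ref{thm:limit-in-terms-of-Z}), using that conjugation swaps the arguments $\pm\tfrac{\pi i}{2}$ and reindexing $b=r-a$; your reindexing and sign bookkeeping check out, and the passage to the star version via the contraction bijection in \eqref{eq:formula-of-zs} is also sound. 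Two small remarks: your ``decisive input'' that $R_{\kk}(T)$ has real coefficients needs no appeal to \eqref{eq:formula-of-reg-MZV}, since $R$ maps into $\R[T]$ by definition; and the paper's route has the advantage of producing a finite-$n$ identity (strictly stronger than the limit statement) by purely elementary means, whereas yours leans on the asymptotic machinery behind Theorem \ref{thm:limit-in-terms-of-Z} but is a clean formal consequence of it. Your part ii) — taking $n\to\infty$ in Theorem \ref{main3} and combining with the reversal from i), using $\wt(\kk^{\vee})=\wt(\kk)$ — is exactly the paper's argument.
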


\begin{proof}
i)\, Changing the summation variable $m_{j}$ to $n-m_{r+1-j} \, (1 \le j \le r)$, 
we see that 
\begin{align*}
\zn_{n}(\overline{\kk}; e^{2\pi i/n})=(-e^{2\pi i/n})^{\wt(\kk)}\, \overline{\zn_{n}(\kk; e^{2\pi i/n})}.    
\end{align*} 
Taking the limit as $n \to +\infty$, we obtain 
$\z(\overline{\kk})=(-1)^{\wt(\kk)}\,\overline{\z(\kk)}$. 
The same calculation works also for $z_{n}^{\star}(\kk; e^{2\pi i/n})$. 

ii) From Theorem \ref{main3} we see that 
$\z^{\star}(\kk)=(-1)^{\wt(\kk)+1}\z^{\star}(\overline{\kk^{\vee}})$. 
Combining it with the equality proved in i), we get the desired equality. 
\end{proof}

We now prove the duality formula for symmetric multiple zeta values, which was also shown in \cite[Corollaire 1.12]{J}.
\begin{corollary}\label{main4}
For any index $\kk$, we have 
\begin{align*}
\zeta^\star_{\mathcal{S}}(\kk) \equiv - \zeta_{\mathcal{S}}^\star (\kk^\vee)
\quad \mbox{and}\quad  
\zeta^\star_{\mathcal{S}}(\kk) \equiv (-1)^{\wt(\kk)}\zeta^\star_{\mathcal{S}}(\overline{\kk}) 
\mod \zeta(2)\mathcal{Z}.
\end{align*}
\end{corollary}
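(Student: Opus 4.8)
The plan is to deduce both congruences directly from Theorem \ref{thm:z-duality} by passing to real parts and then invoking Theorem \ref{main1}. The key observation is that Theorem \ref{main1} already provides, for every index $\kk$, the congruence $\mathrm{Re}(\z^\star(\kk)) \equiv \zeta^\star_{\mathcal{S}}(\kk)$ modulo $\zeta(2)\mathcal{Z}$; consequently it suffices to establish the two duality relations at the level of the real parts $\mathrm{Re}(\z^\star(\cdot))$, where Theorem \ref{thm:z-duality} does all the work.

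First I would treat the Hoffman-duality relation. Starting from part ii) of Theorem \ref{thm:z-duality}, namely $\z^\star(\kk^\vee) = -\,\overline{\z^\star(\kk)}$, I take real parts of both sides. Since $\mathrm{Re}(\overline{w}) = \mathrm{Re}(w)$ for every complex number $w$, this yields $\mathrm{Re}(\z^\star(\kk^\vee)) = -\,\mathrm{Re}(\z^\star(\kk))$. Applying the congruence of Theorem \ref{main1} to each side then gives $\zeta^\star_{\mathcal{S}}(\kk^\vee) \equiv -\,\zeta^\star_{\mathcal{S}}(\kk)$ modulo $\zeta(2)\mathcal{Z}$, which is the first claimed identity.

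Next I would treat the reversal relation in exactly the same manner. Part i) of Theorem \ref{thm:z-duality} reads $\z^\star(\overline{\kk}) = (-1)^{\wt(\kk)}\,\overline{\z^\star(\kk)}$, and taking real parts together with the invariance of $\mathrm{Re}$ under conjugation produces $\mathrm{Re}(\z^\star(\overline{\kk})) = (-1)^{\wt(\kk)}\,\mathrm{Re}(\z^\star(\kk))$. Theorem \ref{main1} then translates this into $\zeta^\star_{\mathcal{S}}(\overline{\kk}) \equiv (-1)^{\wt(\kk)}\,\zeta^\star_{\mathcal{S}}(\kk)$, and multiplying through by $(-1)^{\wt(\kk)}$ gives the stated form $\zeta^\star_{\mathcal{S}}(\kk) \equiv (-1)^{\wt(\kk)}\,\zeta^\star_{\mathcal{S}}(\overline{\kk})$.

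Since the genuine combinatorial content is already packaged into Theorem \ref{thm:z-duality} (which in turn descends from the duality Theorem \ref{main3}), no serious obstacle remains. The only point demanding a moment's care is whether passing to real parts is compatible with reduction modulo $\zeta(2)\mathcal{Z}$; but this compatibility is precisely what Theorem \ref{main1} asserts, so the argument is complete with nothing further to check.
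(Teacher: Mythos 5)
Your proof is correct and follows exactly the paper's route: the paper also deduces Corollary \ref{main4} directly from Theorems \ref{main1} and \ref{thm:z-duality}, and your write-up merely makes explicit the step of taking real parts (using $\mathrm{Re}(\overline{w})=\mathrm{Re}(w)$) before applying the congruence $\mathrm{Re}(\z^{\star}(\kk))\equiv\zeta^{\star}_{\mathcal{S}}(\kk)\bmod \zeta(2)\mathcal{Z}$ to each side. Nothing further is needed.
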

\begin{proof}
This follows directly from Theorems \ref{main1} and \ref{thm:z-duality}.
\end{proof}

\subsection{Example of relations of $z_n^\star(\kk;\zeta_n)$}

In this subsection, using the results obtained in the previous subsections we give an example of relations among $z_n^\star(\kk;\zeta_n)$ and of $\Q$-linear relations among finite and symmetric multiple zeta star values via Theorems \ref{main2} and \ref{main1}, accordingly.

Applying Theorem \ref{main3} to the product $z^{\star}_{n}(\kk; \zeta_{n})z^{\star}_{n}(\kk'; \zeta_{n})$, one has
\begin{align*}
z^{\star}_{n}(\kk; \zeta_{n})z^{\star}_{n}(\kk'; \zeta_{n})=
(-1)^{\wt(\kk)+\wt(\kk')}
z^{\star}_{n}(\overline{\kk^{\vee}}; \zeta_{n})
z^{\star}_{n}(\overline{\kk'^{\vee}}; \zeta_{n}).
\end{align*}
Since each product can be written as $\Q$-linear combinations of $(1-\zeta_{n})^{\wt(\kk)+\wt(\kk')-\wt(\kk'')} z^{\star}_{n}(\kk''; \zeta_{n})$ (see Remark \ref{rem:product}), we can obtain a relation among $z_{n}^{\star}(\kk;\zeta_n)$ over $\Q[1-\zeta_n]$.
As a consequence of the above relations, one can prove for instance the identity
\begin{align}
2\zn^{\star}_{n}(4,1;\zeta_{n})+\zn^{\star}_{n}(3,2;\zeta_{n})=
\frac{(n^4-1)(n+5)}{1440}(1-\zeta_n)^5+\frac{n+2}{3}(1-\zeta_n)^2 \zn_{p}^{\star}(2,1;\zeta_{n})
\label{eq:2_3-prime}
\end{align}
for any $n\ge1$ and any $n$-th primitive root of unity $\zeta_n$.
The identity \eqref{eq:2_3-prime} together with Theorem \ref{main2} shows
\begin{align*}
2\zeta^\star_{\mathcal{A}} (4,1)+ \zeta^\star_{\mathcal{A}}(3,2)=0, 
\end{align*}
which was obtained by Hoffman \cite[Theorem 7.1]{Hoffman}.
On the other hand, using $1-e^{2\pi i/n}=-2\pi i/n+o(1/n)$ as $n \to +\infty$ and Theorem \ref{main1}, we find
\begin{align*}
2\zeta^\star_{\mathcal{S}}(4,1)+\zeta^{\star}_{\mathcal{S}}(3,2) \equiv 0 \mod \zeta(2)\mathcal{Z}\,.
\end{align*}

\section{Cyclotomic analogue of finite multiple zeta values}

\subsection{Definitions}
In this subsection we define the cyclotomic analogue of the finite multiple zeta (star) values $Z(\kk)$ and present its duality formula.
We also compute the value $Z(k)$ of depth one as an example.

As an cyclotomic analogue of the ring $\mathcal{A}$ we define 
\begin{align*}
\mathcal{A}^{\cyc}=\left(\prod_{p:{\rm prime}} \Z[\zeta_p]/(p) \right) 
\bigg/ \left(\bigoplus_{p:{\rm prime}} \Z[\zeta_p]/(p) \right).  
\end{align*}
Similar to $\mathcal{A}$ (see Section \ref{subsec:conn-FMZV}) the ring $\mathcal{A}^{\cyc}$ is a $\Q$-algebra. 

\begin{definition} \label{def:Z}
For an index $\kk$ we define the \emph{cyclotomic analogue of finite multiple zeta value}
\begin{align*}
Z(\kk) = \big(\, \zn_p(\kk;\zeta_p) \mod (p) \,\big)_p \in \mathcal{A}^\cyc, 
\end{align*}
and its star version 
\begin{align*}
Z^\star(\kk) = \big(\, \zn_p^\star(\kk;\zeta_p) \mod (p) \,\big)_p \in \mathcal{A}^\cyc.
\end{align*}
\end{definition}

Recall that $\mathfrak{p}_{p}=(1-\zeta_{p})$ is a prime ideal in $\Z[\zeta_{p}]$ and that $(p)=\mathfrak{p}_{p}^{p-1}$. This yields a surjective map  
\begin{align*}
\Z[\zeta_{p}]/(p) \to \Z[\zeta_{p}]/\mathfrak{p}_{p} \simeq \mathbb{F}_{p}
\end{align*}
for all prime $p$. Let $\varphi$ be the induced $\mathbb{Q}$-algebra homomorphism 
\begin{align}\label{eq:defphi}\begin{split}
\varphi: \mathcal{A}^\cyc &\longrightarrow \mathcal{A}, \\ 
(a_p \mod (p))_p &\longmapsto (a_p \mod \mathfrak{p}_{p})_p\,.
\end{split}
\end{align}
 The map $\varphi$ satisfies $\varphi(Z(\kk))=\zeta_{\mathcal{A}}(\kk)$ and $\varphi(Z^{\star}(\kk))=\zeta^{\star}_{\mathcal{A}}(\kk)$.

Let us write down the formula for $Z(k)$ of depth one.
We write 
\begin{align}
\varpi= (1-\zeta_p)_p \in \mathcal{A}^{cyc}.
\label{eq:def-varpi}
\end{align}
For $k\ge0$ define the numbers $G_{k}$ by 
\begin{align*}
\sum_{k\ge0}G_kz^k = \frac{z}{\log(1+z)}, 
\end{align*}
which are called \emph{Gregory coefficients}. 
It is known that $G_{k}\not=0$ for any $k \ge 0$ (see \cite{Steffensen}).

\begin{proposition}\label{prop:depth-one-formula}
For any $k\ge 1$, we have $Z(k)=-G_k \varpi^k  \in \varpi^k \Q^\times$.
\end{proposition}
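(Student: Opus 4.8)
The plan is to reduce everything to a congruence between rational numbers and then transport it into $\mathcal{A}^{\cyc}$ using the ramification of $p$ in $\Z[\zeta_p]$. By \eqref{eq:degbern} the depth-one value factors as $\zn_p(k;\zeta_p) = c_{p,k}\,(1-\zeta_p)^k$ with $c_{p,k} = -\beta_k(1/p)\,p^k/k! \in \Q$, and by \eqref{eq:def-varpi} we have $-G_k\varpi^k = \big(-G_k(1-\zeta_p)^k \bmod (p)\big)_p$. Thus the proposition will follow once I show that $c_{p,k}\equiv -G_k \pmod p$ for all but finitely many primes $p$; combined with the known nonvanishing $G_k\neq 0$, which forces $-G_k\in\Q^\times$, this also yields $Z(k)\in\varpi^k\Q^\times$.

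First I would justify the transport step by $\mathfrak{p}_p$-adic valuations. Recall $(p)=\mathfrak{p}_p^{\,p-1}$ and $\mathfrak{p}_p=(1-\zeta_p)$, so for a rational number $q$ one has $v_{\mathfrak{p}_p}(q)=(p-1)\,v_p(q)$ while $v_{\mathfrak{p}_p}\big((1-\zeta_p)^k\big)=k$. Consequently, as soon as $v_p(c_{p,k}+G_k)\ge 1$, the element $(c_{p,k}+G_k)(1-\zeta_p)^k$ has $\mathfrak{p}_p$-valuation at least $(p-1)+k\ge p-1$ and hence lies in $(p)$; this gives $\zn_p(k;\zeta_p)\equiv -G_k(1-\zeta_p)^k \pmod{(p)}$ and therefore $Z(k)=-G_k\varpi^k$ in $\mathcal{A}^{\cyc}$.

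It remains to prove the rational congruence, which is where the Gregory coefficients enter. I would start from the generating function \eqref{eq:generating-function-depth-one-pre}, write $(1+x)^n-1=e^{w}-1$ with $w=n\log(1+x)$, and factor
\begin{align*}
\frac{nx}{1-(1+x)^n}+1 = 1 - \frac{x}{\log(1+x)}\cdot\frac{w}{e^{w}-1},
\end{align*}
recognizing $x/\log(1+x)=\sum_j G_j x^j$ and $w/(e^{w}-1)=\sum_m B_m w^m/m!$ as the Gregory and Bernoulli generating functions. Extracting the coefficient of $x^k$ (with $n=p$) gives
\begin{align*}
-c_{p,k} = G_k + \sum_{m=1}^{k}\frac{B_m\,p^m}{m!}\,C_{k,m},
\qquad C_{k,m}:=[x^k]\Big(\sum_j G_j x^j\Big)\big(\log(1+x)\big)^m \in \Q,
\end{align*}
where each $C_{k,m}$ is independent of $p$. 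Every correction term carries a factor $p^m$ with $m\ge 1$, and the coefficients $B_m C_{k,m}/m!$ have denominators depending only on $k$; hence for all primes $p$ exceeding those denominators each term has $p$-adic valuation at least $m\ge 1$, giving $c_{p,k}\equiv -G_k\pmod p$ as required.

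The main obstacle I anticipate is not the generating-function bookkeeping but the transport step: because $(1-\zeta_p)$ is highly ramified, $(1-\zeta_p)^k$ behaves like a small quantity in $\Z[\zeta_p]/(p)$, and one must check carefully that a rational coefficient merely congruent to $-G_k$ modulo $p$ — rather than equal to it — still produces the correct class after multiplication by $(1-\zeta_p)^k$ and reduction modulo $(p)=\mathfrak{p}_p^{\,p-1}$. The valuation inequality $(p-1)\,v_p(c_{p,k}+G_k)+k\ge p-1$ is exactly what makes this work, and one should confirm that the finitely many excluded primes (those dividing $k!$ or the denominators of the $B_m C_{k,m}$) do not affect the resulting class in $\mathcal{A}^{\cyc}$.
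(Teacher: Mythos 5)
Your proof is correct, and it shares the paper's overall skeleton --- both rest on the depth-one generating function \eqref{eq:generating-function-depth-one-pre}, reduce the claim to a congruence between rational numbers, and identify the limiting coefficient through the series $x/\log(1+x)$ --- but the crucial congruence step is organized by a genuinely different decomposition. The paper expands $\frac{nx}{1-(1+x)^n}+1$ as a geometric series in $-\sum_{j\ge1}h_j(n)x^j$ with $h_j(x)=\frac{1}{(j+1)!}\prod_{a=1}^j(x-a)$, which shows that $\zn_n(k;\zeta_n)=D_k(n)(1-\zeta_n)^k$ for a single polynomial $D_k\in\Q[x]$ valid for all $n$; the mod-$p$ step is then simply $D_k(p)\equiv D_k(0)\pmod p$ for large $p$, and the generating series of the constant terms $D_k(0)$ is computed to be $1-z/\log(1+z)$, giving $D_k(0)=-G_k$. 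You instead substitute $(1+x)^p=e^{w}$ with $w=p\log(1+x)$ and factor the generating function as $1-\frac{x}{\log(1+x)}\cdot\frac{w}{e^w-1}$, so the Gregory part appears at once and the entire $p$-dependence is isolated in the Bernoulli tail $\sum_{m\ge1}\frac{B_m p^m}{m!}(\log(1+x))^m$, each of whose contributions to the coefficient of $x^k$ visibly has positive $p$-adic valuation once $p$ avoids finitely many denominators. Each device buys something: the paper's polynomiality argument is shorter and makes the evaluation at $n=0$ immediate, while your factorization gives a closed form for the error terms (essentially re-deriving the degenerate Bernoulli identity \eqref{eq:degbern} of Remark \ref{rem:rationality}), and your transport into $\mathcal{A}^{\cyc}$ --- the inequality $(p-1)\,v_p(c_{p,k}+G_k)+k\ge p-1$ coming from total ramification of $p$ in $\Z[\zeta_p]$ --- is spelled out, whereas the paper asserts the congruence $\zn_p(k;\zeta_p)\equiv D_k(0)(1-\zeta_p)^k \pmod{(p)}$ without comment. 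Your residual worry about the finitely many excluded primes is resolved by the definition of $\mathcal{A}^{\cyc}$: changing finitely many components does not change the class, so neither proof is affected by them.
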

\begin{proof}
The generating function \eqref{eq:generating-function-depth-one-pre} can be written as 
\begin{align}
\sum_{k=1}^\infty z_n(k;\zeta_n) \left( \frac{x}{1-\zeta_n}\right)^k=-\sum_{l=1}^{\infty}
\left(-\sum_{j=1}^{\infty}h_{j}(n)x^{j}\right)^{l},
\label{eq:depth-one-pol}
\end{align}
where we let
\begin{align*}
h_{j}(x)=\frac{1}{(j+1)!}\prod_{a=1}^{j}(x-a) \qquad (j \ge 1).  
\end{align*}
Hence, for each $k \ge 1$, 
there exists a unique polynomial $D_{k}(x) \in \Q[x]$ of degree at most $k$ such that 
$z_{n}(k;\zeta_n)=D_{k}(n) (1-\zeta_n)^k$ for all $n \ge 1$.
Then 
\begin{align*}
\zn_p(k;\zeta_p)\equiv D_{k}(0)(1-\zeta_p)^{k}  \mod (p)
\end{align*}
for sufficiently large prime $p$. 
Therefore $Z(k)=D_{k}(0) \, \varpi^k$ for $k \ge 1$.

On the other hand, from \eqref{eq:depth-one-pol} we see that 
\begin{align*}
\sum_{k=1}^{\infty}D_{k}(0)z^{k}=-\sum_{l=1}^{\infty}
\left(-\sum_{j=1}^{\infty}h_{j}(0)x^{j}\right)^{l}=1-\frac{z}{\log{(1+z)}}.  
\end{align*}
Hence $D_{k}(0)=-G_{k}$ for $k \ge 1$, which completes the proof.
\end{proof}

The first few values are given by
\begin{align*}
Z(1)=-\frac{1}{2} \varpi, \quad  Z(2)=\frac{1}{12} \varpi^2, \quad 
Z(3)=-\frac{1}{24} \varpi^3, \quad  Z(4)=\frac{19}{720}\varpi^4,
\end{align*}
which are also obtained from \eqref{eq:example-zn-depth1}.

\subsection{Algebraic structure}

In this subsection, we examine the algebraic structure of the space spanned by $Z(\kk)$'s and $Z^\star(\kk)$'s.
Recall that $\mathfrak{H}^{1}$ is the noncommutative polynomial algebra over $\Q$  of 
indeterminates $e_j$ with $j\geq 1$ and for an index $\kk=(k_1,\ldots,k_r)$ we write $e_\kk:=e_{k_1}\cdots e_{k_r}$.
For simplicity, we introduce the following notation.
Let $\gamma$ be a function defined on the set of indices taking values in a $\Q$-vector space $M$. 
Then, by abuse of notation, we denote by the same letter $\gamma$ 
the $\Q$-linear map $\mathfrak{H}^{1} \to M$ which sends $e_{\kk}$ to $\gamma(\kk)$. 

As a variant of the product $\ast$ given in \eqref{eq:stuffle}, we define the product $\star$ on $\mathfrak{H}^1$ inductively by
\begin{align*}
& 
1\star w =w\star 1=w \quad (w\in \mathfrak{H^1}), \\
&
e_{k}w \star e_{k'}w' = 
e_{k} (w \star e_{k'}w') +e_{k'} (e_{k}w \star w') - e_{k+k'}(w\star w') \quad (k,k' \ge 1, \, w,w'\in\mathfrak{H}^1). 
\end{align*}
Viewing $\zeta$ and $\zeta^\star$ as a map from the set of indices to $\R$, we have for admissible $v,w \in \mathfrak{H}^{1}$
\[\zeta(v \ast w) = \zeta(v)  \zeta(w)\,,\qquad \zeta^\star(v \star w) = \zeta^\star(v) \zeta^\star(w) \,.\]
To describe the algebraic structure of the space spanned by $Z(\kk)$'s and $Z^\star(\kk)$'s we consider the $\mathcal{C}$-module $\widehat{\mathfrak{H}}^{1}=\mathcal{C} \otimes_{\Q} \mathfrak{H}^{1}$, where $\mathcal{C}=\mathbb{Q}[\hbar]$ denotes the polynomial ring of one variable $\hbar$. 
On $\widehat{\mathfrak{H}}^{1}$ we define the $\mathcal{C}$-bilinear maps 
$\ast_{q}, \, \star_{q}: \widehat{\mathfrak{H}}^{1} \times \widehat{\mathfrak{H}}^{1} \to \widehat{\mathfrak{H}}^{1}$ by 
\begin{align*}
& 
1 \ast_{q} w=w \ast_{q} 1=w, \qquad 1 \star_{q} w=w \star_{q} 1=w, \\ 
& 
e_{k_1} v \ast_{q} e_{k_2} w = 
e_{k_1} (v \ast_{q} e_{k_2} w) + e_{k_2} (e_{k_1} v \ast_{q} w) + 
(e_{k_1+k_2}+\hbar \, e_{k_1+k_2-1})(v\ast_{q} w), \\  
& 
e_{k_1} v \star_{q} e_{k_2} w = 
e_{k_1} (v \star_{q} e_{k_2} w)+ e_{k_2} (e_{k_1} v \star_{q} w) - 
(e_{k_1+k_2}+\hbar \, e_{k_1+k_2-1})(v\star_{q} w)
\end{align*}
for $ v,w \in \widehat{\mathfrak{H}}^{1}$ and $k_1,k_2 \geq 1$. 
Similarly as before, for a function $\Gamma$ taking values in a $\mathcal{C}$-module $\widehat{M}$, 
we denote the induced $\mathcal{C}$-linear map $\widehat{\mathfrak{H}}^{1} \to \widehat{M}$ 
by the same letter $\Gamma$. 
For example, $\Gamma(e_2\ast_{q} e_1)=\Gamma(2,1)+\Gamma(1,2)+\Gamma(3)+\hbar \, \Gamma(2)$.

We define the $\Q$-linear action of $\mathcal{C}$ on $\mathcal{A}^{\cyc}$ by 
$\hbar z=\varpi z \, (z \in \mathcal{A}^{\cyc})$, 
where $\varpi$ is given by \eqref{eq:def-varpi}. 
Then the $\mathcal{C}$-linear maps $Z, \, Z^{\star} : \widehat{\mathfrak{H}}^{1} \to \mathcal{A}^{\cyc}$ 
are defined by the properties 
$Z(e_{\kk})=Z(\kk)$ and $Z^{\star}(e_{\kk})=Z^{\star}(\kk)$ for any index $\kk$. 
It follows that they satisfy 
\begin{align}
Z(v \ast_{q} w)=Z(v)Z(w), \qquad 
Z^{\star}(v \star_{q} w)=Z^{\star}(v)Z^{\star}(w)
\label{eq:q-stuffle-hbar}
\end{align}
for any $v, w \in \widehat{\mathfrak{H}}^{1}$ (see \cite[\S2]{Bradley1}). Due to \eqref{eq:q-stuffle-hbar} the product of two $Z(\kk)$ (resp $Z^\star(\kk)$) can be written as a $\Q[\varpi]$-linear combination of $Z(\kk)$ (resp. $Z^\star(\kk)$). In fact, the next lemma shows that these can be written as a $\Q$-linear combination of $Z(\kk)$ (resp. $Z^\star(\kk)$).

\begin{lemma}\label{lem:product}
For any index $\kk$, we have
\begin{align*}
\varpi \, Z(\kk) &= -\frac{2}{2\dep(\kk)+1} \, Z(e_1\ast e_\kk), \\ 
\varpi \, Z^\star(\kk) &= \frac{2}{2\dep(\kk)-1} \, Z^\star(e_1\star e_\kk). 
\end{align*}
\end{lemma}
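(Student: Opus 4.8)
The plan is to deduce both identities from the homomorphism property \eqref{eq:q-stuffle-hbar} by comparing the deformed products $\ast_{q},\star_{q}$ with the undeformed products $\ast,\star$ in the special case where one factor equals $e_1$. Taking $v=e_1$ and $w=e_\kk$ in \eqref{eq:q-stuffle-hbar} and using the depth-one value $Z(e_1)=-\tfrac12\varpi$ from Proposition \ref{prop:depth-one-formula} (so that $G_1=\tfrac12$), I get $Z(e_1\ast_{q}e_\kk)=Z(e_1)Z(e_\kk)=-\tfrac12\varpi\,Z(\kk)$, and similarly $Z^{\star}(e_1\star_{q}e_\kk)=-\tfrac12\varpi\,Z^{\star}(\kk)$. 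The task is thus to rewrite $e_1\ast_{q}e_\kk$ in terms of the genuine stuffle $e_1\ast e_\kk$ together with an explicit $\hbar$-correction, and likewise for the star product.

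The central step I would isolate is the auxiliary identity
\begin{align}
e_1\ast_{q}e_\kk=e_1\ast e_\kk+\dep(\kk)\,\hbar\,e_\kk,\qquad
e_1\star_{q}e_\kk=e_1\star e_\kk-\dep(\kk)\,\hbar\,e_\kk,
\label{eq:aux-stuffle-e1}
\end{align}
which I would establish by induction on $r=\dep(\kk)$. The mechanism is that $\ast_{q}$ and $\ast$ differ only in their ``merging'' terms: where $\ast$ produces $e_{1+k_a}$, the product $\ast_{q}$ produces $e_{1+k_a}+\hbar\,e_{1+k_a-1}=e_{1+k_a}+\hbar\,e_{k_a}$. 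Since $e_1$ has weight one, the $\hbar$-reduced letter $e_{k_a}$ coincides with the original partner, so each merge contributes exactly $\hbar\,e_\kk$. Unwinding the defining recursion of $\ast_{q}$ with $v=e_1$ shows that one merge occurs at the top level, while the inductive hypothesis applied to the tail $(k_2,\ldots,k_r)$ supplies a further $r-1$ copies of $\hbar\,e_\kk$; the terms without $\hbar$ reassemble precisely into $e_1\ast e_\kk$, exactly as in \eqref{eq:e1-star-prod-reduction} for $s=0$. The star case is identical apart from the sign of the merging term, which produces $-\dep(\kk)\,\hbar\,e_\kk$.

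Granting \eqref{eq:aux-stuffle-e1}, I would apply $Z$ and use the $\mathcal{C}$-action $\hbar z=\varpi z$ on $\mathcal{A}^{\cyc}$ to obtain $Z(e_1\ast_{q}e_\kk)=Z(e_1\ast e_\kk)+r\,\varpi\,Z(\kk)$. Combining this with the value $-\tfrac12\varpi\,Z(\kk)$ computed above and solving,
\[
-\tfrac12\varpi\,Z(\kk)=Z(e_1\ast e_\kk)+r\,\varpi\,Z(\kk)
\quad\Longrightarrow\quad
\varpi\,Z(\kk)=-\frac{2}{2r+1}\,Z(e_1\ast e_\kk),
\]
which is the first assertion. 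Running the same computation with $Z^{\star}$, the product $\star_{q}$, and the minus sign in \eqref{eq:aux-stuffle-e1} gives $-\tfrac12\varpi\,Z^{\star}(\kk)=Z^{\star}(e_1\star e_\kk)-r\,\varpi\,Z^{\star}(\kk)$, hence $\varpi\,Z^{\star}(\kk)=\frac{2}{2r-1}\,Z^{\star}(e_1\star e_\kk)$, the second assertion.

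The main obstacle is the bookkeeping in \eqref{eq:aux-stuffle-e1}: one must check that the sole deviation from the ordinary stuffle is the aggregate $\dep(\kk)\,\hbar\,e_\kk$, and in particular that the expansion stays linear in $\hbar$, with no higher-order corrections generated deeper in the recursion beyond those already encoded by the inductive hypothesis. Once \eqref{eq:aux-stuffle-e1} is secured, the remaining manipulations are purely formal consequences of \eqref{eq:q-stuffle-hbar} and the depth-one value of $Z$.
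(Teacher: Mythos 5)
Your proof is correct and follows essentially the same route as the paper: the paper's proof consists precisely of the auxiliary identity $e_{1}\ast_{q}e_{\kk}=e_{1}\ast e_{\kk}+\hbar\,\dep(\kk)\,e_{\kk}$ (and its star analogue with the opposite sign), stated without proof, combined with the homomorphism property \eqref{eq:q-stuffle-hbar} and $Z(1)=-\varpi/2$. Your inductive verification of that identity and the final algebraic manipulation match the paper's argument; you have merely written out the bookkeeping the paper leaves implicit.
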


\begin{proof}
It holds that 
\begin{align*}
e_{1}\ast_{q}e_{\kk}=e_{1}\ast e_{\kk}+\hbar \dep(\kk) e_{\kk}, \quad 
e_{1}\star_{q}e_{\kk}=e_{1}\star e_{\kk}-\hbar \dep(\kk) e_{\kk}
\end{align*}
for any index $\kk$. 
Now the desired formula follows from \eqref{eq:q-stuffle-hbar} and $Z(1)=-\varpi /2$.  
\end{proof}

Motivated by Lemma \ref{lem:product} we define 
the $\Q$-linear maps $L, L^{\star}: \mathfrak{H}^{1} \to \mathfrak{H}^{1}$ by 
\begin{align*}
L(e_{\kk})=-\frac{2}{2\dep(\kk)+1} \, e_1\ast e_\kk, \quad 
L^{\star}(e_{\kk})=\frac{2}{2\dep(\kk)-1} \, e_1\star e_\kk 
\end{align*}
for any index $\kk$. 
Note that if $\wt(\kk)=k$ then $L(e_{\kk})$ and $L^{\star}(e_{\kk})$ are written as a $\Q$-linear combination 
of monomials of weight $k+1$. 
Using these maps we introduce the $\Q$-linear maps 
$\rho, \rho^{\star}: \widehat{\mathfrak{H}}^{1} \to \mathfrak{H}^{1}$ defined by 
\begin{align*}
\rho(\hbar^{k}w)=L^{k}(w), \quad 
\rho^{\star}(\hbar^{k}w)=(L^{\star})^{k}(w) \qquad 
(k\ge 0, \, w \in \mathfrak{H}^{1})\,,
\end{align*}
with $L^{0}(w)=(L^{\star})^{0}(w)=w$. Note that $\rho(v)=v$ for $v \in \mathfrak{H}^{1}$ and by Lemma \ref{lem:product} we get
\begin{align}
Z(\rho(w))=Z(w), \quad Z^{\star}(\rho^{\star}(w))=Z^{\star}(w) \qquad (w \in \widehat{\mathfrak{H}}^{1}).  
\label{eq:q-lift-hbar}
\end{align}
Now define the $\Q$-bilinear maps 
$\tilde{\ast}, \tilde{\star}: \mathfrak{H}^{1} \times \mathfrak{H}^{1} \to \mathfrak{H}^{1}$ by 
\begin{align*}
v \,\tilde{\ast}\, w=\rho(v \ast_{q} w), \quad 
v \,\tilde{\star}\, w=\rho^{\star}(v \star_{q} w) \qquad (v, w \in \mathfrak{H}^{1})
\end{align*}
and define for $d \ge 0$ the space 
\begin{align*}
\mathfrak{H}^{1}_{d}=\bigoplus_{\substack{\kk \\ \wt(\kk)=d}}\Q\, e_{\kk}\,,   
\end{align*}
which is a $\Q$-linear subspace of $\mathfrak{H}^{1}$. 

\begin{proposition}\label{prop:star-prod-cyc}
(i)\, It holds that  
$\mathfrak{H}^{1}_{d_{1}} \,\tilde{\ast}\, \mathfrak{H}^{1}_{d_{2}} \subset \mathfrak{H}^{1}_{d_{1}+d_{2}}$ and 
$\mathfrak{H}^{1}_{d_{1}} \,\tilde{\star}\, \mathfrak{H}^{1}_{d_{2}} \subset \mathfrak{H}^{1}_{d_{1}+d_{2}}$ 
for $d_{1}, d_{2} \ge 0$. 

(ii)\, For $v, w \in \mathfrak{H}^{1}$, it holds that 
$Z(v \,\tilde{\ast}\, w)=Z(v)Z(w)$ and 
$Z^{\star}(v \,\tilde{\star}\, w)=Z^{\star}(v)Z^{\star}(w)$. 
\end{proposition}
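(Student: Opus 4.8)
The plan is to deduce both parts from a single grading observation. On the free module $\widehat{\mathfrak{H}}^{1}=\mathcal{C}\otimes_{\Q}\mathfrak{H}^{1}$ I will introduce a weight grading in which $\hbar$ is assigned weight one: let $\widehat{\mathfrak{H}}^{1}_{d}$ denote the $\Q$-span of the monomials $\hbar^{j}e_{\kk}$ with $j+\wt(\kk)=d$, so that $\widehat{\mathfrak{H}}^{1}=\bigoplus_{d\ge 0}\widehat{\mathfrak{H}}^{1}_{d}$ and $\mathfrak{H}^{1}_{d}\subseteq\widehat{\mathfrak{H}}^{1}_{d}$ as the $\hbar^{0}$-part. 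With this grading, part (i) reduces to two claims: that $\ast_{q}$ and $\star_{q}$ respect it, i.e. $\widehat{\mathfrak{H}}^{1}_{d_{1}}\ast_{q}\widehat{\mathfrak{H}}^{1}_{d_{2}}\subseteq\widehat{\mathfrak{H}}^{1}_{d_{1}+d_{2}}$ and likewise for $\star_{q}$; and that $\rho,\rho^{\star}$ map $\widehat{\mathfrak{H}}^{1}_{d}$ into $\mathfrak{H}^{1}_{d}$.

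For the first claim, since $\ast_{q}$ is $\mathcal{C}$-bilinear and $\hbar$ itself carries weight one, it suffices to show $e_{\kk}\ast_{q}e_{\kk'}\in\widehat{\mathfrak{H}}^{1}_{\wt(\kk)+\wt(\kk')}$, which I will prove by induction on $\dep(\kk)+\dep(\kk')$ directly from the defining recursion, the base case being $1\ast_{q}w=w$. The only term whose weight is not manifest is $\hbar\,e_{k_{1}+k_{2}-1}(v\ast_{q}w)$: the letter $e_{k_{1}+k_{2}-1}$ contributes $k_{1}+k_{2}-1$ rather than $k_{1}+k_{2}$, but the extra factor $\hbar$ supplies the missing unit of weight, so the total is correct. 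This compensation is the crux of the computation; the three remaining terms are of the right weight by the induction hypothesis, and the identical argument applies verbatim to $\star_{q}$.

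For the second claim, I first note that the stuffle product $\ast$ and its variant $\star$ are weight-homogeneous, so by the definitions of $L,L^{\star}$ together with the remark that $L(e_{\kk})$ and $L^{\star}(e_{\kk})$ are combinations of monomials of weight $\wt(\kk)+1$, both $L$ and $L^{\star}$ raise weight by exactly one. Hence $L^{k}$ and $(L^{\star})^{k}$ raise weight by $k$, and $\rho(\hbar^{k}w)=L^{k}(w)$, $\rho^{\star}(\hbar^{k}w)=(L^{\star})^{k}(w)$ preserve the grading, i.e. $\rho,\rho^{\star}\colon\widehat{\mathfrak{H}}^{1}_{d}\to\mathfrak{H}^{1}_{d}$. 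Combining the two claims, for $v\in\mathfrak{H}^{1}_{d_{1}}$ and $w\in\mathfrak{H}^{1}_{d_{2}}$ one gets $v\,\tilde{\ast}\,w=\rho(v\ast_{q}w)\in\rho(\widehat{\mathfrak{H}}^{1}_{d_{1}+d_{2}})\subseteq\mathfrak{H}^{1}_{d_{1}+d_{2}}$, and likewise for $\tilde{\star}$, which is exactly (i).

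Part (ii) requires no grading and is immediate by chaining the two identities already established: for $v,w\in\mathfrak{H}^{1}\subseteq\widehat{\mathfrak{H}}^{1}$,
\[
Z(v\,\tilde{\ast}\,w)=Z(\rho(v\ast_{q}w))=Z(v\ast_{q}w)=Z(v)Z(w),
\]
where the middle equality is \eqref{eq:q-lift-hbar} applied to $v\ast_{q}w\in\widehat{\mathfrak{H}}^{1}$ and the last is \eqref{eq:q-stuffle-hbar}; the star case is identical, using $\rho^{\star}$, $\star_{q}$ and \eqref{eq:q-stuffle-hbar}. I expect the only genuinely delicate step to be the weight bookkeeping in the induction for $\ast_{q}$ and $\star_{q}$, and in particular keeping track of the $\hbar$-shifted term $\hbar\,e_{k_{1}+k_{2}-1}(v\ast_{q}w)$; everything else is a formal consequence of the cited identities.
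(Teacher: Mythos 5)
Your proof is correct and follows essentially the same route as the paper: assign $\hbar$ weight one so that $\ast_{q}$ and $\star_{q}$ preserve the total weight, observe that $L$ and $L^{\star}$ raise weight by one so that $\rho$ and $\rho^{\star}$ preserve the grading, and deduce (ii) by chaining \eqref{eq:q-lift-hbar} with \eqref{eq:q-stuffle-hbar}. The only difference is that you spell out by induction the weight-preservation of $\ast_{q}$ and $\star_{q}$, which the paper simply asserts.
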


\begin{proof}
(i)\, Note that, if we define the weight of $\hbar$ to be one, 
then the $\mathcal{C}$-bilinear maps $\ast_{q}$ and $\star_{q}$ preserve the total weight. 
Hence the statement follows from the property 
$L(\mathfrak{H}^{1}_{d}) \subset \mathfrak{H}^{1}_{d+1}$ and 
$L^{\star}(\mathfrak{H}^{1}_{d}) \subset \mathfrak{H}^{1}_{d+1}$. 

(ii)\, This follows from \eqref{eq:q-stuffle-hbar} and \eqref{eq:q-lift-hbar}. 
\end{proof}

\begin{corollary}\label{cor:product}
For positive integers $k,k'$, let $\kk$ and $\kk'$ be indices of weight $k$ and $k'$.
Then the product $Z(\kk)Z(\kk')$ (resp. $Z^\star(\kk)Z^\star(\kk')$) can be written as 
$\Q$-linear combinations of $Z(\mathbf{a})$'s (resp. $Z^\star(\mathbf{a})$'s) of weight $k+k'$.
\end{corollary}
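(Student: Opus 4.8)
The plan is to derive Corollary~\ref{cor:product} as a direct consequence of Proposition~\ref{prop:star-prod-cyc}, so the proof should be short and require no new computation. First I would fix indices $\kk$ and $\kk'$ of weight $k$ and $k'$ respectively, so that $e_{\kk} \in \mathfrak{H}^1_k$ and $e_{\kk'} \in \mathfrak{H}^1_{k'}$ by the definition of the weight-graded pieces $\mathfrak{H}^1_d$. By part (ii) of Proposition~\ref{prop:star-prod-cyc} applied to $v=e_{\kk}$ and $w=e_{\kk'}$, I have the product formula
\[
Z(\kk)Z(\kk') = Z(e_{\kk})\,Z(e_{\kk'}) = Z(e_{\kk} \,\tilde{\ast}\, e_{\kk'}),
\]
and the entirely analogous identity $Z^\star(\kk)Z^\star(\kk') = Z^\star(e_{\kk}\,\tilde{\star}\,e_{\kk'})$ for the star version.

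Next I would invoke part (i) of the same proposition: since $e_{\kk} \in \mathfrak{H}^1_k$ and $e_{\kk'} \in \mathfrak{H}^1_{k'}$, the homogeneity statement $\mathfrak{H}^1_{d_1}\,\tilde{\ast}\,\mathfrak{H}^1_{d_2} \subset \mathfrak{H}^1_{d_1+d_2}$ gives that $e_{\kk}\,\tilde{\ast}\,e_{\kk'}$ lies in $\mathfrak{H}^1_{k+k'}$, and likewise $e_{\kk}\,\tilde{\star}\,e_{\kk'} \in \mathfrak{H}^1_{k+k'}$. By definition, $\mathfrak{H}^1_{k+k'}$ is spanned over $\Q$ by the monomials $e_{\mathbf{a}}$ with $\wt(\mathbf{a})=k+k'$. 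Hence I may expand
\[
e_{\kk}\,\tilde{\ast}\,e_{\kk'} = \sum_{\wt(\mathbf{a})=k+k'} c_{\mathbf{a}}\, e_{\mathbf{a}}
\]
with coefficients $c_{\mathbf{a}} \in \Q$, and applying the $\Q$-linear map $Z$ termwise yields $Z(\kk)Z(\kk') = \sum_{\wt(\mathbf{a})=k+k'} c_{\mathbf{a}}\, Z(\mathbf{a})$, which is exactly a $\Q$-linear combination of the $Z(\mathbf{a})$ of weight $k+k'$.

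The star case is identical after replacing $\tilde{\ast}$, $Z$ by $\tilde{\star}$, $Z^\star$ throughout, using the second halves of parts (i) and (ii). I do not anticipate a genuine obstacle here, since all the real content has been front-loaded into the construction of the maps $\rho$, $\rho^\star$ and the verification of Proposition~\ref{prop:star-prod-cyc}; the only point worth stating carefully is that the passage from $\ast_q$ (which produces coefficients in $\mathcal{C}=\Q[\hbar]$) to $\tilde{\ast}$ is precisely what clears the $\hbar$'s, so that the resulting coefficients $c_{\mathbf{a}}$ are rational numbers rather than polynomials in $\varpi$. This is guaranteed by the identity $Z(\rho(w))=Z(w)$ from \eqref{eq:q-lift-hbar} together with the homogeneity in part (i), and it is the essential reason the corollary improves on the naive $\Q[\varpi]$-linearity coming directly from \eqref{eq:q-stuffle-hbar}.
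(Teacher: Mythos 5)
Your proof is correct and is exactly the argument the paper intends: the corollary is stated without proof precisely because it follows immediately from Proposition \ref{prop:star-prod-cyc}, part (ii) giving $Z(\kk)Z(\kk')=Z(e_{\kk}\,\tilde{\ast}\,e_{\kk'})$ and part (i) giving $e_{\kk}\,\tilde{\ast}\,e_{\kk'}\in\mathfrak{H}^1_{k+k'}$, after which $\Q$-linearity of $Z$ finishes the job (and likewise for the star version). Your closing remark correctly identifies the point of $\rho$, $\rho^\star$: they convert the $\Q[\varpi]$-linear combinations coming from \eqref{eq:q-stuffle-hbar} into genuinely $\Q$-linear ones of the stated weight.
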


\subsection{Dimension of the space of $Z(\kk)$}

In this subsection we discuss the dimension of the $\Q$-vector space 
spanned by $Z(\kk)$'s and $Z^{\star}(\kk)$'s. 
First we note the following fact. 

\begin{proposition}
For any $k \ge 0$, it holds that $Z(\mathfrak{H}^{1}_{k})=Z^{\star}(\mathfrak{H}^{1}_{k})$ 
as a $\Q$-linear subspace of $\mathcal{A}^{\cyc}$. 
\end{proposition}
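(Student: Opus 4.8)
The plan is to prove the two inclusions $Z^{\star}(\mathfrak{H}^{1}_{k}) \subseteq Z(\mathfrak{H}^{1}_{k})$ and $Z(\mathfrak{H}^{1}_{k}) \subseteq Z^{\star}(\mathfrak{H}^{1}_{k})$ separately. Since each side is a $\Q$-linear subspace of $\mathcal{A}^{\cyc}$ spanned by the $Z^{\star}(\kk)$ (resp. $Z(\kk)$) with $\wt(\kk)=k$, it suffices to show that each such generator lands in the other space. The two tools are the conversion formulas \eqref{eq:zn-star-to-nostar} and \eqref{eq:zn-nostar-to-star} together with Lemma \ref{lem:product}. First I would pass \eqref{eq:zn-star-to-nostar} to $\mathcal{A}^{\cyc}$: specializing $q=\zeta_{p}$, reducing modulo $(p)$, and recalling $\varpi=(1-\zeta_{p})_{p}$ (the $c_{\kk,\kk'}$ being integers independent of $n$) turns it into
\[
Z^{\star}(\kk)=\sum_{\mathbf{a}} Z(\mathbf{a})+\sum_{\substack{\kk'\\ \wt(\kk')<\wt(\kk)}} c_{\kk,\kk'}\,\varpi^{\wt(\kk)-\wt(\kk')} Z(\kk'),
\]
where every index $\mathbf{a}$ in the first sum has weight $\wt(\kk)$.

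The key auxiliary step is the claim that for every index $\kk$ and every $j\ge 0$ one has $\varpi^{j} Z(\kk) \in Z(\mathfrak{H}^{1}_{\wt(\kk)+j})$. I would prove this by induction on $j$. The case $j=0$ is trivial. For the inductive step, write the inductive hypothesis as: $\varpi^{j} Z(\kk)$ is a $\Q$-linear combination of terms $Z(\mathbf{b})$ with $\wt(\mathbf{b})=\wt(\kk)+j$. Applying $\varpi$ to each such term, Lemma \ref{lem:product} gives $\varpi Z(\mathbf{b}) = -\tfrac{2}{2\dep(\mathbf{b})+1}\,Z(e_{1}\ast e_{\mathbf{b}})$, and since the stuffle product $e_{1}\ast e_{\mathbf{b}}$ is a $\Q$-linear combination of monomials of weight $\wt(\mathbf{b})+1$ (inserting or absorbing a single $e_{1}$ raises the total weight by exactly one), we get $\varpi Z(\mathbf{b})\in Z(\mathfrak{H}^{1}_{\wt(\mathbf{b})+1})=Z(\mathfrak{H}^{1}_{\wt(\kk)+j+1})$. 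Hence $\varpi^{j+1}Z(\kk)\in Z(\mathfrak{H}^{1}_{\wt(\kk)+j+1})$, completing the induction.

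Finally I would assemble the two facts. In the displayed formula, the first sum lies in $Z(\mathfrak{H}^{1}_{\wt(\kk)})$ because each $\mathbf{a}$ has weight $\wt(\kk)$, while each summand of the second sum lies in $\varpi^{\wt(\kk)-\wt(\kk')}Z(\kk')\subseteq Z(\mathfrak{H}^{1}_{\wt(\kk')+(\wt(\kk)-\wt(\kk'))})=Z(\mathfrak{H}^{1}_{\wt(\kk)})$ by the auxiliary claim. Thus $Z^{\star}(\kk)\in Z(\mathfrak{H}^{1}_{k})$ for every $\kk$ of weight $k$, giving the first inclusion. The reverse inclusion is entirely symmetric, using \eqref{eq:zn-nostar-to-star} and the star version $\varpi Z^{\star}(\kk)=\tfrac{2}{2\dep(\kk)-1}Z^{\star}(e_{1}\star e_{\kk})$ of Lemma \ref{lem:product}, where $e_{1}\star e_{\kk}$ is again homogeneous of weight $\wt(\kk)+1$. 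The two inclusions together yield the claimed equality. I do not anticipate a serious obstacle; the only point demanding care is the weight bookkeeping, namely verifying that multiplication by $\varpi$ shifts the weight grading of $Z(\mathfrak{H}^{1}_{\bullet})$ by exactly one, which is precisely what lets the lower-weight correction terms in the conversion formulas be pushed back up into the single graded piece $Z(\mathfrak{H}^{1}_{k})$.
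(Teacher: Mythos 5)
Your proof is correct and follows essentially the same route as the paper: both reduce \eqref{eq:zn-star-to-nostar} (resp.\ \eqref{eq:zn-nostar-to-star}) modulo $(p)$ to write $Z^{\star}(\kk)$ in terms of $\varpi^{j}Z(\kk')$, and both use Lemma \ref{lem:product} to absorb each factor of $\varpi$ while raising the weight by one. Your explicit induction on $j$ is exactly the paper's statement $\varpi^{j}Z(\kk')=Z(L^{j}(e_{\kk'}))$, phrased without naming the iterated map $L$.
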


\begin{proof}
{}From \eqref{eq:zn-star-to-nostar} we see that $Z^{\star}(\kk)$ is represented as 
\begin{align*}
Z^{\star}(\kk)=\sum_{\substack{\kk' \\ \wt(\kk') \le \wt(\kk)}} c_{\kk, \kk'} \varpi^{\wt(\kk)-\wt(\kk')} Z(\kk'),    
\end{align*} 
where $c_{\kk, \kk'} \in \Q$. 
Lemma \ref{lem:product} implies that 
$\varpi^{\wt(\kk)-\wt(\kk')} Z(\kk')=Z(L^{\wt(\kk)-\wt(\kk')}(e_{\kk'}))$, and 
the weight of $L^{\wt(\kk)-\wt(\kk')}(e_{\kk'})$ is equal to $\wt(\kk)$. 
Hence $Z^{\star}(\kk) \in Z(\mathfrak{H}^{1}_{k})$ for any index $\kk$ of weight $k$.  
In the same way we see that $Z(\kk) \in Z^{\star}(\mathfrak{H}^{1}_{k})$ 
if $\wt(\kk)=k$ from \eqref{eq:zn-nostar-to-star} and therefore $Z(\mathfrak{H}^{1}_{k})=Z^{\star}(\mathfrak{H}^{1}_{k})$. 
\end{proof}

\begin{theorem}\label{thm:delta-duality-Zstar}
For any index $\kk$ we have
\begin{align}\label{eq:zduality}
Z^{\star}(\mathbf{k})=(-1)^{\mathrm{wt}(\mathbf{k})+1}Z^{\star}(\overline{\mathbf{k}^{\vee}}).   
\end{align} 
\end{theorem}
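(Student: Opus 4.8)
The plan is to deduce this identity directly from the prime-level duality already established in Theorem \ref{main3}, so that the present statement becomes simply its image under the componentwise reduction defining $\mathcal{A}^{\cyc}$. Recall that $Z^{\star}(\kk)$ is defined componentwise as $\big(\zn_p^{\star}(\kk;\zeta_p) \bmod (p)\big)_p$; hence it suffices to produce the corresponding congruence in each factor $\Z[\zeta_p]/(p)$ and then assemble over all primes.

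First I would specialize Theorem \ref{main3} to the case $n=p$ prime, with $\zeta_p$ a fixed primitive $p$-th root of unity. This gives the exact identity
\[
\zn_p^{\star}(\kk;\zeta_p)=(-1)^{\wt(\kk)+1}\,\zn_p^{\star}(\overline{\kk^{\vee}};\zeta_p)
\]
in $\Z[\zeta_p]$, recalling from the proof of Theorem \ref{main2} that these values are in fact algebraic integers. Reducing this identity modulo the ideal $(p)$ then yields the congruence
\[
\zn_p^{\star}(\kk;\zeta_p)\equiv(-1)^{\wt(\kk)+1}\,\zn_p^{\star}(\overline{\kk^{\vee}};\zeta_p) \pmod{(p)}
\]
in $\Z[\zeta_p]/(p)$, valid for every prime $p$. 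Collecting these component equalities over all primes and invoking the definition of $Z^{\star}$ together with the $\Q$-algebra structure of $\mathcal{A}^{\cyc}$ produces the asserted equality \eqref{eq:zduality}.

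There is no genuine obstacle here, since all the combinatorial and arithmetic content resides in Theorem \ref{main3}. The only minor point to check is that the scalar $(-1)^{\wt(\kk)+1}$ is identical in every component: this holds because $\wt(\overline{\kk^{\vee}})=\wt(\kk^{\vee})=\wt(\kk)$, as noted in Section \ref{subsec:index-duality}, so no $p$-dependence enters the sign and the relation descends cleanly to $\mathcal{A}^{\cyc}$.
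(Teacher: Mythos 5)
Your proof is correct and is exactly the paper's argument: the paper's proof of Theorem \ref{thm:delta-duality-Zstar} is precisely that the formula is immediate from Theorem \ref{main3} (specialized to $n=p$) and Definition \ref{def:Z}. Your additional checks --- integrality of the values in $\Z[\zeta_p]$ and the $p$-independence of the sign via $\wt(\overline{\kk^{\vee}})=\wt(\kk)$ --- are sound and simply make explicit what the paper leaves implicit.
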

\begin{proof}
The formula is immediate from Theorem \ref{main3} and Definition \ref{def:Z}.
\end{proof}

Combining Theorem \ref{eq:zduality} with Proposition \ref{prop:star-prod-cyc} (ii),  
we obtain a variant of the double shuffle relation \cite{IKZ} among $Z^{\star}(\kk)$'s.
To describe it, we denote by $\delta$ the $\Q$-linear map 
$\delta : \h^1 \rightarrow \h^1$ sending $e_\kk$ to 
$(-1)^{\wt(\kk)+1}e_{\overline{\kk^{\vee}}}$ for any index $\kk$. 
Note that the map $\delta$ is an involution on $\h^1$ and with this \eqref{eq:zduality} can be stated as $Z^{\star}(e_\kk) = Z^{\star}(\delta(e_{\kk}))$.

\begin{theorem}\label{thm:double-shuffle}
For any indices $\kk$ and $\kk'$, we have
\begin{align*}
Z^\star\Big(e_\kk \,\tilde{\star}\, e_{\kk'}-\delta\big((\delta(e_{\kk}) \,\tilde{\star}\, \delta(e_{\kk'})\big)\Big)=0.
\end{align*} 
\end{theorem}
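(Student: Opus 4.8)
The plan is to deduce the identity from exactly two facts established earlier: that $Z^{\star}$ is multiplicative for the product $\tilde{\star}$ (Proposition \ref{prop:star-prod-cyc}(ii)) and that $Z^{\star}$ is invariant under the duality involution, i.e. $Z^{\star}\circ\delta=Z^{\star}$ as $\Q$-linear maps $\mathfrak{H}^{1}\to\mathcal{A}^{\cyc}$ (Theorem \ref{thm:delta-duality-Zstar}). The strategy is to show that the two terms inside the argument of $Z^{\star}$ both evaluate to the single element $Z^{\star}(e_{\kk})\,Z^{\star}(e_{\kk'})$, so that their difference lies in the kernel of $Z^{\star}$.

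First I would record that the duality relation \eqref{eq:zduality}, which a priori is stated only on the basis $\{e_{\kk}\}$, extends by $\Q$-linearity of both $Z^{\star}$ and $\delta$ to the identity of maps $Z^{\star}(\delta(w))=Z^{\star}(w)$ for every $w\in\mathfrak{H}^{1}$; I would also invoke that $\delta$ is an involution, as noted before the statement. With these in hand the computation is immediate. For the first term, Proposition \ref{prop:star-prod-cyc}(ii) gives
\[
Z^{\star}(e_{\kk}\,\tilde{\star}\,e_{\kk'})=Z^{\star}(e_{\kk})\,Z^{\star}(e_{\kk'}).
\]

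For the second term I would peel off the structure one layer at a time. Applying $Z^{\star}\circ\delta=Z^{\star}$ with $w=\delta(e_{\kk})\,\tilde{\star}\,\delta(e_{\kk'})$ removes the outer $\delta$; then Proposition \ref{prop:star-prod-cyc}(ii) factors the product as $Z^{\star}(\delta(e_{\kk}))\,Z^{\star}(\delta(e_{\kk'}))$; and a final application of $Z^{\star}\circ\delta=Z^{\star}$ to each factor yields
\[
Z^{\star}\big(\delta(\delta(e_{\kk})\,\tilde{\star}\,\delta(e_{\kk'}))\big)=Z^{\star}(e_{\kk})\,Z^{\star}(e_{\kk'}).
\]
Subtracting the two displays gives the claim.

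The point to watch is not any hard estimate but the clean bookkeeping of the three invariance/multiplicativity applications, together with the verification that the relation is genuinely nontrivial in $\mathfrak{H}^{1}$: the element $e_{\kk}\,\tilde{\star}\,e_{\kk'}-\delta(\delta(e_{\kk})\,\tilde{\star}\,\delta(e_{\kk'}))$ is in general a nonzero $\Q$-linear combination of monomials of weight $\wt(\kk)+\wt(\kk')$ (by Proposition \ref{prop:star-prod-cyc}(i) everything stays homogeneous of this weight), so the theorem produces a nontrivial family of $\Q$-linear relations among the $Z^{\star}(\kk)$. This is precisely the \emph{double shuffle} mechanism: one and the same value, $Z^{\star}(e_{\kk})\,Z^{\star}(e_{\kk'})$, is expanded in two different ways, and the discrepancy of the two expansions lies in $\ker Z^{\star}$.
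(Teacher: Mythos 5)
Your proof is correct and follows essentially the same route as the paper: both arguments combine the multiplicativity $Z^{\star}(v\,\tilde{\star}\,w)=Z^{\star}(v)Z^{\star}(w)$ from Proposition \ref{prop:star-prod-cyc}(ii) with the duality invariance $Z^{\star}\circ\delta=Z^{\star}$ from Theorem \ref{thm:delta-duality-Zstar} (extended by linearity, exactly as you note), evaluating both terms to $Z^{\star}(e_{\kk})Z^{\star}(e_{\kk'})$. Your closing remark on homogeneity and nontriviality of the resulting relations is a pleasant addition but not part of the claim itself.
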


\begin{proof}
This follows from Proposition \ref{prop:star-prod-cyc} (ii) and 
Theorem \ref{thm:delta-duality-Zstar} because
\begin{align*}
Z^\star\Big(\delta\big(\delta(e_{\kk}) \,\tilde{\star}\, \delta(e_{\kk'})\big)\Big)=
Z^\star\big(\delta(e_{\kk}) \,\tilde{\star}\, \delta(e_{\kk'})\big)=
Z^\star\big(\delta(e_{\kk})\big)Z^\star\big(\delta(e_{\kk'})\big)=Z^\star(e_\kk)Z^\star( e_{\kk'}),  
\end{align*}
which is equal to $Z^{\star}(e_{\kk} \,\tilde{\star}\, e_{\kk'})$. 
\end{proof}

\begin{remark} \label{rem:dimension}
For $k \ge 0$ we define the $\Q$-linear subspace $\mathcal{Z}_{k}^{\cyc}$ of $\mathcal{A}^{\cyc}$ by 
\begin{align*}
\mathcal{Z}_{k}^{\cyc}=Z^{\star}(\mathfrak{H}^{1}_{k})=Z(\mathfrak{H}^{1}_{k}). 
\end{align*}
Using Theorem \ref{thm:delta-duality-Zstar} and \ref{thm:double-shuffle}, we have the following upper bounds 
for the dimension of $\mathcal{Z}^{\cyc}_{k}$:
\begin{center}
$\begin{array}{c|cccccccccccccccccccccccccc}
k & 0&1&2&3&4&5&6&7&8&9&10&11&12\\ \hline
\dim_\Q \mathcal{Z}_k^\text{cyc}  \le & 1&1&1&2&2&4&5&8&12&17&27&38&57\\
\end{array}$
\end{center}

For prime $p \ge 2$ and $k \ge 0$, we denote by $\mathcal{Z}^{(p)}_{k}$ the $\Q$-vector space 
spanned by $z_{p}(\kk; e^{2\pi i/p})$ with $\wt(\kk)=k$. Notice that $\dim_{\Q}\mathcal{Z}^{(p)}_{k} \leq p-1 = [ \Q(\zeta_p):\Q]$. Denote by $d_k$ the numbers in the second column of the above table. By numerical experiments, we observed that for $1\le k \le 12$ we have $\dim_{\Q}\mathcal{Z}^{(p)}_{k} \geq d_k$ for primes $p > d_k$ up to $p=113$. Thus, we might expect that Theorem \ref{thm:delta-duality-Zstar} and \ref{thm:double-shuffle} give all $\Q$-linear relations among the $Z^\star(\kk)$'s.
\end{remark}

\subsection{Kaneko--Zagier conjecture revisited}\label{sec:kanekozagierconj}
In this subsection we will give a new interpretation of the Kaneko--Zagier conjecture in terms of the cyclotomic analogue of finite multiple zeta values $Z(\kk)$. Let us first recall the statement of their conjecture. Let $\mathcal{Z}_{\mathcal{A}}$ be the $\Q$-vector space of finite multiple zeta values.
It forms a $\Q$-algebra. 
\begin{conjecture}(Kaneko--Zagier \cite{KanekoZagier})\label{conj:kanekozagier}
There exists a $\Q$-algebra isomorphism 
\begin{align*}
\varphi_{KZ}: \mathcal{Z}_{\mathcal{A}} &\longrightarrow \mathcal{Z}/\zeta(2)\mathcal{Z},\\
\zeta_{\mathcal{A}}(\kk) &\longmapsto \zeta_{\mathcal{S}}(\kk) \mod \zeta(2)\mathcal{Z}.
\end{align*}
\end{conjecture}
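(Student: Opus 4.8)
Since this is the Kaneko--Zagier conjecture, what follows is a program rather than a complete proof; the natural route given the present machinery is to factor the desired isomorphism through the cyclotomic analogue. The plan is to realize both $\mathcal{Z}_{\mathcal{A}}$ and $\mathcal{Z}/\zeta(2)\mathcal{Z}$ as quotients of the single graded space $\mathcal{Z}^{\cyc}=\bigoplus_{k\ge0}\mathcal{Z}^{\cyc}_{k}=Z(\mathfrak{H}^{1})\subset\mathcal{A}^{\cyc}$. On the finite side this is already in place: the $\Q$-algebra homomorphism $\varphi$ of \eqref{eq:defphi} satisfies $\varphi(Z(\kk))=\zeta_{\mathcal{A}}(\kk)$, so $\varphi$ restricts to a surjection $\varphi\colon\mathcal{Z}^{\cyc}\twoheadrightarrow\mathcal{Z}_{\mathcal{A}}$. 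The first step is then to produce the companion projection, namely a surjective $\Q$-algebra homomorphism $\psi\colon\mathcal{Z}^{\cyc}\to\mathcal{Z}/\zeta(2)\mathcal{Z}$ with $\psi(Z(\kk))=\zeta_{\mathcal{S}}(\kk)$ --- this is the content of Conjecture \ref{conj:mainconj}. Its plausibility rests on Theorem \ref{main1}: the archimedean specialization $z_p(\kk;\zeta_p)\mapsto\z(\kk)=\lim_n z_n(\kk;e^{2\pi i/n})$ has real part congruent to $\zeta_{\mathcal{S}}(\kk)$ modulo $\zeta(2)\mathcal{Z}$, and the multiplicativity of $\psi$ should follow from Proposition \ref{prop:star-prod-cyc}(ii) in the limit.

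Granting both $\varphi$ and $\psi$, the map $\varphi_{KZ}$ sending $\zeta_{\mathcal{A}}(\kk)\mapsto\zeta_{\mathcal{S}}(\kk)$ is well defined precisely when $\ker(\varphi)\subseteq\ker(\psi)$, and it is a $\Q$-algebra isomorphism precisely when $\ker(\varphi)=\ker(\psi)$ as subspaces of $\mathcal{Z}^{\cyc}$. Thus the entire conjecture is reduced to an equality of kernels inside the cyclotomic analogue, which is exactly the reinterpretation announced in the introduction. Part of this is already supported: the duality relation of Theorem \ref{thm:delta-duality-Zstar} and the double-shuffle relations of Theorem \ref{thm:double-shuffle} are identities among the $Z^{\star}(\kk)$ themselves, hence lie in $\ker(\varphi)\cap\ker(\psi)$ and descend simultaneously to relations among the $\zeta_{\mathcal{A}}^{\star}$ and the $\zeta_{\mathcal{S}}^{\star}$ (cf. the arguments of Section \ref{sec:duality}). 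So every relation we can currently produce belongs to both kernels, and the numerical dimension counts recorded in Remark \ref{rem:dimension} agree, weight by weight, with the conjectural dimensions of the finite multiple zeta value space $\mathcal{Z}_{\mathcal{A}}$.

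The hard part is the converse, which is the genuine substance of the conjecture: that no $\Q$-linear relation is peculiar to one quotient. Concretely, one must show that $\sum_{\kk}c_{\kk}Z(\kk)$ vanishes after reduction modulo $\mathfrak{p}_p$ (a finite relation) if and only if the real part of $\sum_{\kk}c_{\kk}\z(\kk)$ lies in $\zeta(2)\mathcal{Z}$ (a symmetric relation). At present there is no mechanism tying the $p$-adic kernel to the archimedean kernel --- these are two very different specializations of the same element of $\mathcal{A}^{\cyc}$ --- and this is where I expect the main obstacle to lie. A realistic line of attack is to first conjecture, and then prove, a complete generating set of $\Q$-linear relations for $\mathcal{Z}^{\cyc}$ (refining Theorems \ref{thm:delta-duality-Zstar} and \ref{thm:double-shuffle} until the upper bounds of Remark \ref{rem:dimension} are attained), and to verify that its images under $\varphi$ and $\psi$ each exhaust the respective relation spaces; this would transport the whole problem into the cyclotomic analogue, where both specializations are simultaneously visible.
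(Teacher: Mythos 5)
The statement you were asked to prove is an open conjecture of Kaneko and Zagier; the paper contains no proof of it, and your submission, as you yourself note, is a program rather than a proof. That program coincides with the paper's own reinterpretation in Section \ref{sec:kanekozagierconj}: realize $\mathcal{Z}_{\mathcal{A}}$ and $\mathcal{Z}/\zeta(2)\mathcal{Z}$ as quotients of $\mathcal{Z}^{\cyc}$ via the surjection $\varphi_{\mathcal{A}}$ and the conjectural map $\varphi_{\mathcal{S}}$ of Conjecture \ref{conj:mainconj}, and reduce Conjecture \ref{conj:kanekozagier} to the kernel equality $\ker\varphi_{\mathcal{S}}=\ker\varphi_{\mathcal{A}}$ --- precisely the paper's closing theorem that Conjecture \ref{conj:mainconj} implies Conjecture \ref{conj:kanekozagier}. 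One factual correction: your claim that the dimension counts of Remark \ref{rem:dimension} agree weight by weight with the conjectural dimensions of $\mathcal{Z}_{\mathcal{A}}$ is wrong, since the table bounds $\dim_{\Q}\mathcal{Z}^{\cyc}_{k}$, which exceeds the dimension of the weight-$k$ span of finite multiple zeta values because $\ker\varphi_{\mathcal{A}}$ is nontrivial in each weight: already $Z(1)=-\varpi/2\neq 0$ in $\mathcal{A}^{\cyc}$ while $\zeta_{\mathcal{A}}(1)=0$.
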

To give a new interpretation of this conjecture, we consider the $\Q$-vector space spanned by all $Z(\kk)$ 
\begin{align*}
\mathcal{Z}^{\cyc}=Z^{\star}(\h^{1})=Z(\h^{1}).  
\end{align*}
By Corollary \ref{cor:product} this is a $\Q$-subalgebra of $\mathcal{A}^{\cyc}$. 
The restriction of the map $\varphi: \mathcal{A}^\cyc \rightarrow \mathcal{A}$ defined in \eqref{eq:defphi} to $\mathcal{Z}^{\cyc}$ gives the surjective $\Q$-algebra homomorphism to the  $\Q$-algebra $\mathcal{Z}_{\mathcal{A}}$ of finite multiple zeta values denoted by
\begin{align*}
\varphi_{\mathcal{A}}: \mathcal{Z}^{\cyc} \longrightarrow \mathcal{Z}_{\mathcal{A}}\,.
\end{align*} 
For any index $\kk$ we have $\varphi_{\mathcal{A}}(Z(\kk))=\zeta_{\mathcal{A}}(\kk)$. 
On the other hand the relationship of the $Z(\kk)$ to the symmetric multiple zeta values is not understood yet, but we expect the following.
\begin{conjecture}\label{conj:mainconj}
\begin{enumerate}[i)]
\item There exists a $\Q$-algebra homomorphism 
\begin{align*}
\varphi_{\mathcal{S}}: \mathcal{Z}^\cyc &\longrightarrow \mathcal{Z}/\zeta(2)\mathcal{Z}\,,\\
Z(\kk) &\longmapsto \zeta_{\mathcal{S}}(\kk) \mod \zeta(2)\mathcal{Z} \,.
\end{align*} 
\item The equality $\ker \varphi_{\mathcal{S}} = \ker \varphi_{\mathcal{A}}$ holds.
\end{enumerate}
\end{conjecture}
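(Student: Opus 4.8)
The plan is to regard both parts of Conjecture~\ref{conj:mainconj} as statements about a single object, the family $n\mapsto \zn_n(\kk;e^{2\pi i/n})$, which the paper specializes in two complementary ways: arithmetically, by reducing modulo $p$ to obtain $Z(\kk)\in\mathcal{A}^{\cyc}$, and analytically, by letting $n\to\infty$ to obtain $\z(\kk)\in\C$. The map $\varphi_{\mathcal{A}}$ is already in hand, being the restriction of $\varphi$ from \eqref{eq:defphi} with $\varphi_{\mathcal{A}}(Z(\kk))=\zeta_{\mathcal{A}}(\kk)$ by Theorem~\ref{main2}. Hence the entire content is to construct $\varphi_{\mathcal{S}}$ out of the analytic specialization and then to compare the two kernels.

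For part i) I would first reduce the existence of $\varphi_{\mathcal{S}}$ to the single linear assertion that every $\Q$-linear relation $\sum_{\kk}c_{\kk}Z(\kk)=0$ in $\mathcal{A}^{\cyc}$ already forces $\sum_{\kk}c_{\kk}\zeta_{\mathcal{S}}(\kk)\equiv 0 \bmod \zeta(2)\mathcal{Z}$; equivalently, that $\ker(Z)\subseteq\ker(\zeta_{\mathcal{S}})$ as subspaces of $\h^{1}$. Granting this, $\varphi_{\mathcal{S}}$ is a well-defined $\Q$-linear map, and its multiplicativity is the easy part. Indeed, by Proposition~\ref{prop:star-prod-cyc}(ii) a product $Z(\kk)Z(\kk')$ equals $Z(e_{\kk}\,\tilde{\ast}\,e_{\kk'})=Z\big(\rho(e_{\kk}\ast_q e_{\kk'})\big)$, and the only difference between $\rho(e_{\kk}\ast_q e_{\kk'})$ and the plain stuffle $e_{\kk}\ast e_{\kk'}$ is a combination of terms $L^{m}(\cdot)$ with $m\ge1$. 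Since $\zeta_{\mathcal{S}}(1)=0$ (immediate from Definition~\ref{def:symMZV}) and $\zeta_{\mathcal{S}}$ is itself stuffle-multiplicative, the map $\zeta_{\mathcal{S}}\circ L$ vanishes, so every such correction is annihilated and $\varphi_{\mathcal{S}}(Z(\kk)Z(\kk'))=\zeta_{\mathcal{S}}(\kk)\zeta_{\mathcal{S}}(\kk')$. Thus the homomorphism property costs nothing beyond the linear relation transfer.

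The genuine obstruction, and the reason the statement is only conjectural, is already visible in depth one. Writing $\zn_n(k;\zeta_n)=D_{k}(n)(1-\zeta_n)^{k}$ as in the proof of Proposition~\ref{prop:depth-one-formula}, the arithmetic specialization records the constant term, $Z(k)=D_{k}(0)\varpi^{k}$, whereas the analytic one records the leading coefficient, through $(1-e^{2\pi i/n})^{k}\sim(-2\pi i/n)^{k}$ together with $D_{k}(n)\sim c_{k}n^{k}$, where $c_{k}$ is the leading coefficient of $D_k$. These are different coefficients of one and the same polynomial, so there is no formal mechanism by which a relation among the $Z(\kk)$ must survive among the $\z(\kk)$. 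Transferring relations is therefore not an algebraic manipulation but demands arithmetic information specific to the values $\zn_n(\kk;\zeta_n)$, and I expect this to be the main obstacle.

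For part ii) I would proceed by reformulation rather than by frontal assault. Once part i) holds, one checks that $\ker\varphi_{\mathcal{S}}=\ker\varphi_{\mathcal{A}}$ inside $\mathcal{Z}^{\cyc}$ is equivalent to the equality $\ker(\zeta_{\mathcal{A}})=\ker(\zeta_{\mathcal{S}})$ of relation spaces in $\h^{1}$, which is precisely the assertion that finite and symmetric multiple zeta values satisfy the same $\Q$-linear relations, i.e.\ Conjecture~\ref{conj:kanekozagier}. In other words part ii) is the Kaneko--Zagier conjecture transported through $\mathcal{Z}^{\cyc}$, and an unconditional proof is beyond current reach. What I would actually carry out is the evidential half: produce as many families of identities as possible that hold among the $\zn_n^{\star}(\kk;\zeta_n)$ over $\Q[1-\zeta_n]$ for all $n$ — the duality of Theorem~\ref{main3} and the double-shuffle relations of Theorem~\ref{thm:double-shuffle} being the template — since each such universal identity projects simultaneously, via Theorems~\ref{main2} and~\ref{main1}, into both $\ker\varphi_{\mathcal{A}}$ and $\ker\varphi_{\mathcal{S}}$, forcing the two kernels to share a common subspace. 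The dimension coincidences recorded in Remark~\ref{rem:dimension} suggest this common subspace may already exhaust $\ker\varphi_{\mathcal{A}}$; proving that it does is exactly the point at which the problem becomes equivalent to Kaneko--Zagier and where present techniques give out.
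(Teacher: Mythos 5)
The statement you were given is one of the paper's conjectures, not one of its theorems: the paper offers no proof of Conjecture~\ref{conj:mainconj}, only the construction of $\varphi_{\mathcal{A}}$, the (itself unproved) assertion that Conjecture~\ref{conj:mainconj} implies Conjecture~\ref{conj:kanekozagier}, and some remarks on $\ker\varphi_{\mathcal{A}}$. So there is no proof to compare yours against, and your decision to present reductions and obstructions rather than to claim a proof is the right one. As an analysis, what you wrote is correct and in places goes beyond what the paper records.

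Specifically: (a) Your reduction of part i) to the single linear statement $\ker Z\subseteq\ker\zeta_{\mathcal{S}}$ inside $\h^1$ is correct, and the observation that multiplicativity then comes for free is accurate, since the corrections $\rho(e_{\kk}\ast_q e_{\kk'})-e_{\kk}\ast e_{\kk'}$ lie in the image of $L$ and $\zeta_{\mathcal{S}}\circ L=0$. Note, however, that this vanishing uses $\zeta_{\mathcal{S}}(e_1\ast e_{\kk})\equiv\zeta_{\mathcal{S}}(1)\,\zeta_{\mathcal{S}}(\kk)=0$ modulo $\zeta(2)\mathcal{Z}$, i.e.\ the stuffle product formula for symmetric multiple zeta values; this is a known result (Kaneko--Zagier, Jarossay) but it is neither stated nor proved in this paper, so you should cite it rather than invoke ``$\zeta_{\mathcal{S}}$ is stuffle-multiplicative'' as if it were free. (b) Your depth-one diagnosis of the obstruction is correct and is a useful point absent from the paper: writing $\zn_n(k;\zeta_n)=D_k(n)(1-\zeta_n)^k$ as in Proposition~\ref{prop:depth-one-formula}, the value $Z(k)=D_k(0)\varpi^k$ records the constant term of $D_k$, while $\xi(k)$ records its top-degree coefficient (e.g.\ $Z(2)=\varpi^2/12$ versus $\xi(2)=2\zeta(2)$), so no formal manipulation of these polynomial identities transfers relations from one specialization to the other. (c) For part ii), your claim that, granting part i), the equality $\ker\varphi_{\mathcal{S}}=\ker\varphi_{\mathcal{A}}$ amounts to the finite and symmetric values satisfying the same $\Q$-linear relations is correct --- one has $\ker\varphi_{\mathcal{A}}=Z(\ker\zeta_{\mathcal{A}})$ and $\ker\varphi_{\mathcal{S}}=Z(\ker\zeta_{\mathcal{S}})$, and the inclusion $\ker Z\subseteq\ker\zeta_{\mathcal{A}}\cap\ker\zeta_{\mathcal{S}}$ lets you lift the equality of images back to $\h^1$ --- and this recovers, indeed slightly sharpens, the paper's unproved theorem that Conjecture~\ref{conj:mainconj} implies Conjecture~\ref{conj:kanekozagier}. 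The one caveat is that Conjecture~\ref{conj:kanekozagier} as formally stated asserts an isomorphism onto all of $\mathcal{Z}/\zeta(2)\mathcal{Z}$, so your ``equivalence'' additionally requires that the symmetric multiple zeta values span $\mathcal{Z}/\zeta(2)\mathcal{Z}$; this is known (by work of Yasuda) but is not contained in the paper, and should be flagged rather than absorbed into the phrase ``precisely the assertion''.
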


This conjecture is a re-interpretation of the conjecture by Kaneko and Zagier.

\begin{theorem}
Conjecture \ref{conj:mainconj} implies Conjecture \ref{conj:kanekozagier}. 
\end{theorem}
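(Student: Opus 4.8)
The plan is to derive Conjecture \ref{conj:kanekozagier} from Conjecture \ref{conj:mainconj} as an essentially formal consequence of the first isomorphism theorem, using $\mathcal{Z}^{\cyc}$ as a common algebra lying above both $\mathcal{Z}_{\mathcal{A}}$ and $\mathcal{Z}/\zeta(2)\mathcal{Z}$. The two maps in play are the surjective $\Q$-algebra homomorphism $\varphi_{\mathcal{A}}\colon \mathcal{Z}^{\cyc}\to \mathcal{Z}_{\mathcal{A}}$ already constructed above, with $\varphi_{\mathcal{A}}(Z(\kk))=\zeta_{\mathcal{A}}(\kk)$, and the homomorphism $\varphi_{\mathcal{S}}\colon \mathcal{Z}^{\cyc}\to \mathcal{Z}/\zeta(2)\mathcal{Z}$ furnished by Conjecture \ref{conj:mainconj}~i), with $\varphi_{\mathcal{S}}(Z(\kk))=\zeta_{\mathcal{S}}(\kk)\bmod \zeta(2)\mathcal{Z}$. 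The decisive algebraic input is the kernel equality $\ker\varphi_{\mathcal{A}}=\ker\varphi_{\mathcal{S}}$ granted by Conjecture \ref{conj:mainconj}~ii).

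Granting these, I would first pass to the common quotient. The first isomorphism theorem yields a $\Q$-algebra isomorphism $\bar\varphi_{\mathcal{A}}\colon \mathcal{Z}^{\cyc}/\ker\varphi_{\mathcal{A}}\xrightarrow{\ \sim\ }\mathcal{Z}_{\mathcal{A}}$ together with the induced homomorphism $\bar\varphi_{\mathcal{S}}\colon \mathcal{Z}^{\cyc}/\ker\varphi_{\mathcal{S}}\to \mathcal{Z}/\zeta(2)\mathcal{Z}$. Because the two kernels coincide, these are maps out of one and the same quotient algebra, so I may set
\[
\varphi_{KZ}:=\bar\varphi_{\mathcal{S}}\circ \bar\varphi_{\mathcal{A}}^{-1}\colon \mathcal{Z}_{\mathcal{A}}\longrightarrow \mathcal{Z}/\zeta(2)\mathcal{Z},
\]
a composite of $\Q$-algebra homomorphisms, hence a $\Q$-algebra homomorphism. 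Tracing a generator, $\bar\varphi_{\mathcal{A}}^{-1}(\zeta_{\mathcal{A}}(\kk))$ is the class of $Z(\kk)$ modulo the common kernel, and applying $\bar\varphi_{\mathcal{S}}$ returns $\zeta_{\mathcal{S}}(\kk)\bmod\zeta(2)\mathcal{Z}$; the well-definedness of this value is exactly the inclusion $\ker\varphi_{\mathcal{A}}\subseteq\ker\varphi_{\mathcal{S}}$, while injectivity of $\varphi_{KZ}$ is the reverse inclusion $\ker\varphi_{\mathcal{S}}\subseteq\ker\varphi_{\mathcal{A}}$, both supplied by Conjecture \ref{conj:mainconj}~ii). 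This already gives the homomorphism with the prescribed values on $\zeta_{\mathcal{A}}(\kk)$.

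The one point that is not delivered by the formal machinery, and which I expect to be the main obstacle, is surjectivity of $\varphi_{KZ}$ onto the full quotient $\mathcal{Z}/\zeta(2)\mathcal{Z}$. Since $\varphi_{\mathcal{A}}$ is surjective, the image of $\varphi_{KZ}$ equals the image of $\varphi_{\mathcal{S}}$, namely the $\Q$-subalgebra of $\mathcal{Z}/\zeta(2)\mathcal{Z}$ spanned by the classes $\zeta_{\mathcal{S}}(\kk)$. I would therefore invoke the known fact that the symmetric multiple zeta values span $\mathcal{Z}/\zeta(2)\mathcal{Z}$ as a $\Q$-vector space: in Definition \ref{def:symMZV} the term $a=0$ contributes $R_{\kk}(T)=\zeta(\kk)$ for admissible $\kk$, so that every admissible $\zeta(\kk)$ can, modulo $\zeta(2)\mathcal{Z}$ and together with the reversal and regularization relations, be expressed in terms of the $\zeta_{\mathcal{S}}$, whence the $\zeta_{\mathcal{S}}(\kk)$ generate the whole quotient. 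With surjectivity in hand, $\varphi_{KZ}$ is the desired $\Q$-algebra isomorphism sending $\zeta_{\mathcal{A}}(\kk)$ to $\zeta_{\mathcal{S}}(\kk)\bmod\zeta(2)\mathcal{Z}$, and Conjecture \ref{conj:kanekozagier} follows.
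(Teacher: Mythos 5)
Your formal argument is, in substance, the whole content of this theorem as the paper sees it: the paper states the result without writing out a proof, and the intended argument is exactly your first two paragraphs — since $\varphi_{\mathcal{A}}$ is surjective, Conjecture \ref{conj:mainconj} identifies $\mathcal{Z}_{\mathcal{A}}\cong\mathcal{Z}^{\cyc}/\ker\varphi_{\mathcal{A}}=\mathcal{Z}^{\cyc}/\ker\varphi_{\mathcal{S}}$, and composing the inverse of this identification with the map induced by $\varphi_{\mathcal{S}}$ gives an injective $\Q$-algebra homomorphism $\varphi_{KZ}$ with $\varphi_{KZ}(\zeta_{\mathcal{A}}(\kk))=\zeta_{\mathcal{S}}(\kk)\bmod\zeta(2)\mathcal{Z}$. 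That part of your proposal is correct and needs no changes.

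The weak point is precisely the one you flag: surjectivity. You correctly reduce it to the statement that the classes of the $\zeta_{\mathcal{S}}(\kk)$ span $\mathcal{Z}/\zeta(2)\mathcal{Z}$ as a $\Q$-vector space (the image of $\varphi_{KZ}$ equals the image of $\varphi_{\mathcal{S}}$, and $\mathcal{Z}^{\cyc}$ is spanned by the $Z(\kk)$ by Corollary \ref{cor:product}). But your parenthetical justification of that spanning fact does not hold up. In Definition \ref{def:symMZV} the term $a=0$ indeed contributes $\zeta(\kk)$ for admissible $\kk$, but the term $a=\dep(\kk)$ contributes $\pm R_{\overline{\kk}}(0)$, which is a $\Q$-linear combination of multiple zeta values of the \emph{same} weight (and, when the reversed index is admissible, the honest value $\zeta(\overline{\kk})$ of the same weight and depth); the intermediate terms are products whose total weight is again $\wt(\kk)$. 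So nothing decreases, and the naive induction implicit in ``every admissible $\zeta(\kk)$ can be expressed in terms of the $\zeta_{\mathcal{S}}$ via reversal and regularization relations'' does not go through. The spanning statement is a genuine, nontrivial theorem, due to Yasuda (symmetric, i.e.\ finite real, multiple zeta values generate the whole space $\mathcal{Z}$), and it is needed here since Conjecture \ref{conj:kanekozagier} demands an isomorphism onto all of $\mathcal{Z}/\zeta(2)\mathcal{Z}$, not merely onto the subalgebra generated by the $\zeta_{\mathcal{S}}(\kk)$. Your proof is fine if you invoke Yasuda's theorem as an external citation — which is how you phrase it — but the attempted one-line derivation of it from the definition should be deleted, as it suggests the fact is formal when it is not.
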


We end this paper by giving some observation on the elements of the ideal $\ker \varphi_{\mathcal{A}}$ in $\mathcal{Z}^\cyc$. As an easy consequence of the definition of $\varphi_{\mathcal{A}}$ we obtain the following. 

\begin{proposition}
We have $\ker \varphi_{\mathcal{A}} =\mathcal{Z}^\cyc \cap \varpi \mathcal{A}^\cyc$, 
where $\varpi\mathcal{A}^\cyc$ denotes the ideal of $\mathcal{A}^\cyc$ generated by $\varpi$.
\end{proposition}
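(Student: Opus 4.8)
The plan is to reduce the whole statement to a single clean identity in the ambient ring, namely $\ker\varphi=\varpi\mathcal{A}^{\cyc}$, and then intersect with $\mathcal{Z}^{\cyc}$. Since $\varphi_{\mathcal{A}}$ is by definition the restriction of $\varphi$ to $\mathcal{Z}^{\cyc}$, we automatically have $\ker\varphi_{\mathcal{A}}=\mathcal{Z}^{\cyc}\cap\ker\varphi$, so it suffices to identify $\ker\varphi$ with the ideal generated by $\varpi$.

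First I would analyze $\varphi$ prime by prime. For each $p$ the component map $\Z[\zeta_p]/(p)\to\F_p$ defining $\varphi$ is reduction modulo $\mathfrak{p}_p$, so its kernel is $\mathfrak{p}_p/(p)$; here one uses $(p)=\mathfrak{p}_p^{p-1}\subseteq\mathfrak{p}_p$, recorded just before \eqref{eq:defphi}. Because $\mathfrak{p}_p=(1-\zeta_p)$ is principal, this kernel is exactly the principal ideal of $\Z[\zeta_p]/(p)$ generated by the image of $1-\zeta_p$. This is the key local computation, and it is what makes the global kernel principal with generator $\varpi=(1-\zeta_p)_p$.

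Next I would pass to the quotient by $\bigoplus_p\Z[\zeta_p]/(p)$. An element $(a_p)_p$ lies in $\ker\varphi$ precisely when $a_p\in\mathfrak{p}_p/(p)$ for all but finitely many $p$. The inclusion $\varpi\mathcal{A}^{\cyc}\subseteq\ker\varphi$ is immediate, since each component of $\varpi\cdot(c_p)_p$ is $(1-\zeta_p)c_p\in\mathfrak{p}_p/(p)$. For the reverse inclusion, given $(a_p)_p\in\ker\varphi$, at each of the cofinitely many primes with $a_p\in\mathfrak{p}_p/(p)$ I would lift $a_p$ to $\mathfrak{p}_p=(1-\zeta_p)\Z[\zeta_p]$, write it as $(1-\zeta_p)c_p$, and put $c_p=0$ at the finitely many exceptional primes; then $\varpi\cdot(c_p)_p$ agrees with $(a_p)_p$ off a finite set and hence equals it in $\mathcal{A}^{\cyc}$. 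This establishes $\ker\varphi=\varpi\mathcal{A}^{\cyc}$, and intersecting with $\mathcal{Z}^{\cyc}$ yields $\ker\varphi_{\mathcal{A}}=\mathcal{Z}^{\cyc}\cap\varpi\mathcal{A}^{\cyc}$.

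The only point that requires attention is the bookkeeping across the $\bigoplus_p$ quotient: one must keep in mind that equality in $\mathcal{A}^{\cyc}$ demands agreement only for all but finitely many $p$, which is exactly what permits discarding the exceptional primes when assembling the preimage $(c_p)_p$. Beyond this there is no real obstacle; the content is simply that the global kernel of $\varphi$ is the principal ideal generated by $\varpi$, reflecting the local fact that $\mathfrak{p}_p$ is generated by $1-\zeta_p$.
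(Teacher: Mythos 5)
Your proposal is correct and follows the same route as the paper: the paper's proof simply asserts that the claim is immediate from $\ker \varphi = \varpi \mathcal{A}^{\cyc}$, and your argument supplies exactly the details behind that identity (the local kernel $\mathfrak{p}_p/(p)$ being generated by the image of $1-\zeta_p$, plus the finitely-many-primes bookkeeping) before intersecting with $\mathcal{Z}^{\cyc}$.
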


\begin{proof}
This is immediate from $\ker \varphi = \varpi \mathcal{A}^\cyc$.
\end{proof}
Lemma \ref{lem:product} implies that $\varpi \mathcal{Z}^{\cyc} \subset \mathcal{Z}^{\cyc}$. 
Hence $\varpi \mathcal{Z}^{\cyc} \subset \ker{\varphi_{\mathcal{A}}}$. 
However, we expect $ \varpi \mathcal{Z}^\cyc \neq \ker \varphi_{\mathcal{A}}$.
For example, by \cite[Theorem 7.1]{Hoffman} we have
\begin{align}
\zeta_{\mathcal{A}}(4,1)-2\,\zeta_{\mathcal{A}}(3,1,1)=0. 
\label{eq:hoffman-rel} 
\end{align}
Therefore $Z(4,1)-2Z(3,1,1) \in \ker{\varphi_{\mathcal{A}}}$, but it can be shown that $Z(4,1)-2Z(3,1,1) \notin \varpi \mathcal{Z}^\cyc$. So far it is not known how to describe the elements in 
$(\ker \varphi_{\mathcal{A}}) \backslash \varpi\mathcal{Z}^\cyc$ in general.


\end{document}